\numberwithin{equation}{section}
\newtheorem{thm}{Theorem}[section]
\newtheorem{cor}[thm]{Corollary}
\newtheorem{lem}[thm]{Lemma}
\newtheorem{prop}[thm]{Proposition}
\theoremstyle{definition}
\newtheorem{rem}[thm]{Remark}
\newtheorem{note}[thm]{Notation}
\newtheorem{defi}[thm]{Definition}
\DeclareMathOperator{\NL}{\mathrm{NL}}
\DeclareMathOperator{\Ima}{\mathrm{Im}}
\DeclareMathOperator{\p3}{\mathbb{P}^3}
\DeclareMathOperator{\pr}{\mathrm{pr}}
\DeclareMathOperator{\Spec}{\mathrm{Spec}}
\DeclareMathOperator{\red}{\mathrm{red}}
\DeclareMathOperator{\N}{\mathcal{N}}
\DeclareMathOperator{\T}{\mathcal{T}}
\DeclareMathOperator{\I}{\mathcal{I}}
\DeclareMathOperator{\mo}{\mathcal{O}}
\DeclareMathOperator{\Hc}{\mc{H}om}
\newcommand{\mb}[1]{\mathbb{#1}}
\newcommand{\mc}[1]{\mathcal{#1}}
\newcommand{\mr}[1]{\mathrm{#1}}
\newcommand{\ov}[1]{\overline{#1}}
\newcommand{\mf}[1]{\mathfrak{#1}}
\newcommand{\mbf}[1]{\mathbf{#1}}
\begin{document}

\title[Local topological obstruction for divisors]{Local topological obstruction for
divisors}

\author[I. Biswas]{Indranil Biswas}

\address{School of Mathematics, Tata Institute of Fundamental
Research, Homi Bhabha Road, Mumbai 400005, India}

\email{indranil@math.tifr.res.in}

\author[A. Dan]{Ananyo Dan}

\address{BCAM -- Basque Centre for Applied Mathematics, Alameda de Mazarredo 14,
48009 Bilbao, Spain} 

\email{adan@bcamath.org}

\keywords{Obstruction theories; Hodge locus; semi-regularity map; deformation of
linear systems; Noether-Lefschetz locus.}

\subjclass[2010]{14B10, 14B15, 14C30, 14C20, 14C25, 14D07.}

\begin{abstract}
Given a smooth, projective variety $X$ and
an effective divisor $D\,\subseteq\, X$, it is well-known that 
the (topological) obstruction 
to the deformation of the fundamental class of $D$ as a Hodge class,
lies in $H^2(\mo_X)$. In this article, we replace $H^2(\mo_X)$ by 
$H^2_D(\mo_X)$ and give an analogous topological obstruction theory.
We compare the resulting local topological obstruction theory
with the geometric obstruction theory (i.e., the obstruction to the
deformation of $D$ as an effective Cartier divisor of a first order 
infinitesimal deformations of $X$).
 We apply this to study the jumping locus of 
families of linear systems and the Noether-Lefschetz locus. Finally,
we give examples of first order deformations $X_t$ of $X$ for which 
the cohomology class $[D]$
deforms as a Hodge class but $D$ \emph{does not} lift as an 
effective Cartier divisor of $X_t$.
\end{abstract}

\maketitle

\tableofcontents

\section{Introduction}

The base field $k$ is always assumed to be algebraically closed of characteristic zero. Consider a family of smooth,
projective varieties \[\pi\, :\, \mc{X}\,\longrightarrow\, B\]
parameterized by a complex manifold $B$. Fix a closed point 
$o \in B$ and denote by $X\,:=\,\pi^{-1}(o)$ the fiber over $o$. 
Let $D \,\subseteq\, X$ be an effective divisor.
Recall that, by Hodge decomposition, $H^2(X,\,\mb{C})$ decomposes
as a direct sum of sub-vector spaces $H^{i,2-i}(X)$ for 
$0 \,\le\, i \,\le\, 2$. The integral elements of $H^{1,1}(X)$, meaning the elements
of $H^{1,1}(X) \cap (H^2(X,\,\mb{Z})/(\mr{torsion}))$, are known as \emph{Hodge classes}.
It is well-known that the fundamental class of a divisor, given by the 
first Chern class of the associated line bundle, is a Hodge class, which is
proved using the exponential short exact sequence.

There is an obvious ``topological'' obstruction for
an effective divisor $D \,\subseteq\, X$ to deform along with the variety $X$: the 
fundamental class $[D]$ of the divisor $D$ is needed to deform as a 
Hodge class. Using the Lefschetz $(1,1)$--theorem, this topological
obstruction is precisely the obstruction to the deformation of the
invertible sheaf $\mo_X(D)$. Using the exponential short exact
sequence, one can then check that this obstruction actually lies 
in $H^2(X,\, \mo_X)$. We will say that \emph{the fundamental class of $D$
deforms as a Hodge class along $t \in T_oB$} 
if the invertible sheaf $\mo_X(D)$ 
deforms to an invertible sheaf on the first order 
infinitesimal deformation $X_t$
of $X$, in the direction of $t$.
If the obstruction vanishes, we may further ask
whether the divisor $D$ deforms as an effective Cartier divisor in $X_t$?
This happens if and only if
the section $s_D$ of $\mo_X(D)$
cutting out the divisor $D$ deforms to a global section 
of the infinitesimal deformations of the 
invertible sheaf $\mo_X(D)$. It is known that the obstruction to
such deformations of the section $s_D$ lies in 
$H^1(X,\, \mo_X(D))$ (see \cite[\S~$3.3$]{S1}). In this article, 
we replace the classical topological obstruction space 
$H^2(X,\, \mo_X)$ by the local cohomology group $H^2_D(\mo_X)$.

We prove the following (see Theorem \ref{brill04}):

\begin{thm}\label{th:br01}
Consider deformations of $X$ parameterized by $B$ with base point
$o\,\in\, B$, as above. To each effective divisor 
$D\, \subseteq\, X$ and tangent vector $t \,\in\, T_oB$, there is a
\emph{canonically} associated element $\xi_D^t \,\in\, H^2_D(\mo_X)$ 
that satisfies the following condition: The vanishing
of $\xi_D^t$ is equivalent to the existence of an integer $n\,>\,0$ such that
$nD$ deforms as an effective Cartier divisor, as $X$ deforms in the direction of $t$.
\end{thm}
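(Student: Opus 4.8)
\emph{Sketch of the proof.} The plan is to realise $H^2_D(\mo_X)$ as a filtered colimit of ordinary cohomology groups attached to the thickenings $nD$, to build $\xi_D^t$ by pushing the Kodaira--Spencer class of $t$ forward along a canonical sheaf map, and to match the vanishing of the $n$-th term of the colimit with the liftability of $nD$. First I would establish the identification $H^2_D(\mo_X)\cong\varinjlim_{n>0}H^1(X,\mo_X(nD)/\mo_X)$. Since $D$ is an effective Cartier divisor, the local cohomology sheaves vanish except $\mc{H}^1_D(\mo_X)=\varinjlim_n\mo_X(nD)/\mo_X$ (transition maps the inclusions of subsheaves of rational functions), so the local-to-global spectral sequence $H^p(X,\mc{H}^q_D(\mo_X))\Rightarrow H^{p+q}_D(\mo_X)$ degenerates and yields the claim, cohomology on the noetherian space $X$ commuting with the filtered colimit. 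I would also record the canonical isomorphism $\mo_X(nD)/\mo_X\cong\mc{N}_{nD/X}$ coming from $0\to\mo_X\to\mo_X(nD)\to\mo_X(nD)/\mo_X\to 0$.

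Next I would construct the canonical $\mo_X$-linear map $\varphi\colon T_X\to\mo_X(D)/\mo_X$: in a local trivialisation with $D=\{f=0\}$, set $\varphi(v)=v(f)/f\bmod\mo_X$; this is independent of the trivialisation because $v(uf)/(uf)-v(f)/f=v(u)/u$ is regular for a unit $u$, and under $\mo_X(D)/\mo_X\cong\mc{N}_{D/X}$ it is the usual surjection $T_X\twoheadrightarrow\mc{N}_{D/X}$. Writing $\varphi_n\colon T_X\to\mo_X(nD)/\mo_X$, $v\mapsto v(f^n)/f^n\bmod\mo_X$, one has $\varphi_n=n\cdot(\iota_n\circ\varphi)$, where $\iota_n$ is the inclusion $\mo_X(D)/\mo_X\hookrightarrow\mo_X(nD)/\mo_X$. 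Then I define $\xi_D^t$ to be the image of the Kodaira--Spencer class $\kappa(t)\in H^1(X,T_X)$ of $t$ under the composite $H^1(X,T_X)\xrightarrow{\varphi_*}H^1(X,\mo_X(D)/\mo_X)\to\varinjlim_n H^1(X,\mo_X(nD)/\mo_X)=H^2_D(\mo_X)$; this is manifestly canonical. (One may also recognise $\xi_D^t$ as $\kappa(t)$ cupped with a local cycle class of $D$ in $H^1_D(X,\Omega^1_X)$ and contracted to $\mo_X$.)

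The core of the argument is a \v{C}ech computation of the obstruction to lifting $nD$. Choose a cover over which $X_t$ is obtained by gluing copies of $\mo_{U_i}[\epsilon]$ along the automorphisms $\mathrm{id}+\epsilon v_{ij}$, with $[\{v_{ij}\}]=\kappa(t)$ and $\mc{I}_{nD}|_{U_i}=(f_i^n)$. Seeking a flat lift of $\mc{I}_{nD}$ to $\mo_{X_t}$, i.e. local generators $f_i^n+\epsilon h_i$ that glue (up to units) via the $v_{ij}$, and dividing through by $f_i^n$, one finds that such a lift exists if and only if the \v{C}ech $1$-cocycle $\{\varphi_n(v_{ij})\}$ is a coboundary, i.e. $\varphi_{n,*}\kappa(t)=0$ in $H^1(X,\mo_X(nD)/\mo_X)$. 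Moreover a flat lift of the closed subscheme $nD$ is automatically an effective Cartier divisor of $X_t$ (Nakayama plus the local flatness criterion), so lifting $nD$ as a subscheme and as an effective Cartier divisor are the same condition. I expect this step to be the main obstacle: one must treat the deformation theory of the \emph{non-reduced} scheme $nD$ correctly, confirm that its obstruction is exactly $\varphi_{n,*}\kappa(t)$ rather than some variant, and keep the bookkeeping between the line bundle and its section straight.

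Finally I would assemble the statement. An element of a filtered colimit vanishes precisely when its image in some term vanishes, so $\xi_D^t=0$ iff $\iota_{n,*}(\varphi_*\kappa(t))=0$ in $H^1(X,\mo_X(nD)/\mo_X)$ for some $n>0$. Because these groups are $k$-vector spaces, $\mathrm{char}\,k=0$, and $\varphi_{n,*}\kappa(t)=n\cdot\iota_{n,*}(\varphi_*\kappa(t))$, this is equivalent to $\varphi_{n,*}\kappa(t)=0$ for some $n>0$, hence by the \v{C}ech step to the existence of $n>0$ for which $nD$ lifts to an effective Cartier divisor of $X_t$ --- which is precisely the asserted equivalence.
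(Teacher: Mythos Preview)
Your argument is correct and produces the same canonical element $\xi_D^t=\pi_D(\mr{KS}(t))$ as the paper, but the packaging differs. The paper works with the single sheaf $\mc{H}^1_D(\mo_X)$ and, for each $n$, invokes the short exact sequence $0\to\N_{nD|X}\to\mc{H}^1_D(\mo_X)\to\mc{H}^1_D(\mo_X(nD))\to 0$ of Lemma~\ref{lef02}; the equivalence is then extracted by a diagram chase through~\eqref{diag02} (relating these sequences for $D$ and $nD$) together with the ``pole-clearing'' observation that any $f\in H^0(\mc{H}^1_D(\mo_X(D)))$ becomes regular after multiplying by $s_D^{n-1}$ for $n\gg 0$. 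Your filtered-colimit identification $H^2_D(\mo_X)\cong\varinjlim_n H^1(\N_{nD|X})$ encodes the same content more directly: the paper's pole-clearing step is precisely the reason the transition maps on $H^1$ fail to be injective, and the vanishing criterion in a filtered colimit replaces the diagram chase. Your factor-of-$n$ relation $\varphi_n=n\cdot(\iota_n\circ\varphi)$ is exactly the commutative square~\eqref{diag01}, and both arguments use $\mr{char}\,k=0$ at that point. Your route is slightly more economical since it avoids introducing the auxiliary sheaves $\mc{H}^1_D(\mo_X(nD))$; the paper's route has the minor advantage that Lemma~\ref{lef02} is stated in a form reused later.
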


Theorem \ref{th:br01} says that given an effective
divisor $D$ in $X$, the obstruction to the deformation of 
a high enough multiple of $D$ as an effective Cartier divisor
lies in $H^2_D(\mo_X)$. Of course, if the 
obstruction vanishes then the fundamental class of $D$ 
deforms as a Hodge class. The question then arises, if no 
multiple of $D$ deforms as an effective Cartier divisor, is it 
still possible for the fundamental class of $D$ to deform as 
a Hodge class? As noted in Remark \ref{rem:ref}, this is possible.
However, it can be shown that the difference between the two obstruction 
theories (i.e., the topological obstruction for the deformation of the 
fundamental class of $D$ as a Hodge class
and the geometric obstruction for the 
deformation of a multiple of $D$ as an effective Cartier divisor)
arises from $1$-cocycles of the structure sheaf of an open subset $U$ of 
$X$ such that $X \backslash U$ is of codimension at least $2$ in $X$.
In particular, we prove (see Theorem \ref{th:br03}):

\begin{thm}\label{thintro1}
 There exists a proper, closed subvariety $T \,\subseteq\, D$ (proper meaning 
 the dimension of $T$ is strictly less than that of $D$)
 such that for the natural morphism 
 \[\Phi_{(T \subseteq D)}\,:\, H^1(\mo_{X\backslash T}) \,\longrightarrow\, H^1(\mo_{X\backslash
D}) \,\longrightarrow\, H^2_D(\mo_X)\, , \]
 and any tangent vector $t \in T_oB$, the fundamental class $[D]$ deforms 
 as a Hodge class in the direction of $t$ if and only if the 
 image of $\xi^t_D \in H^2_D(\mo_X)$ defined
 in Theorem \ref{th:br01} above,
 maps to zero in $\mr{coker}(\Phi_{(T \subseteq D)})$.
\end{thm}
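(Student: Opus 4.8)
The plan is to compare the two obstruction theories through the local cohomology long exact sequence of the pair $(X,D)$:
\[\cdots\longrightarrow H^1(\mo_X)\longrightarrow H^1(\mo_{X\backslash D})\xrightarrow{\ \delta\ }H^2_D(\mo_X)\xrightarrow{\ \rho\ }H^2(\mo_X)\longrightarrow\cdots .\]
One should first check, directly from the construction of $\xi_D^t$ (cf. the proof of Theorem \ref{th:br01}), that the image $\rho(\xi_D^t)\in H^2(\mo_X)$ is the classical topological obstruction to deforming $\mo_X(D)$ along $t$; granting this, $[D]$ deforms as a Hodge class along $t$ if and only if $\rho(\xi_D^t)=0$, that is, if and only if $\xi_D^t\in\Ima(\delta)$. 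Hence it suffices to exhibit a proper closed subvariety $T\subseteq D$ with
\[\Ima(\Phi_{(T\subseteq D)})=\Ima(\delta):\]
then $\mr{coker}(\Phi_{(T\subseteq D)})=H^2_D(\mo_X)/\Ima(\delta)$ is carried isomorphically onto $\Ima(\rho)\subseteq H^2(\mo_X)$ by $\rho$, sending $\xi_D^t$ to $\rho(\xi_D^t)$, and the asserted equivalence drops out.

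To get this identity I would write $\Phi_{(T\subseteq D)}=\delta\circ\mr{res}_T$, where $\mr{res}_T\colon H^1(\mo_{X\backslash T})\to H^1(\mo_{X\backslash D})$ is restriction; since $\mr{res}_T$ factors $\delta$, the inclusion $\Ima(\Phi_{(T\subseteq D)})\subseteq\Ima(\delta)$ is automatic, and one must force $\mr{res}_T$ to hit enough of $H^1(\mo_{X\backslash D})$. Now $X\backslash D=(X\backslash T)\backslash(D\backslash T)$, so the local cohomology long exact sequence of the pair $(X\backslash T,\,D\backslash T)$ reads
\[\cdots\longrightarrow H^1(\mo_{X\backslash T})\xrightarrow{\ \mr{res}_T\ }H^1(\mo_{X\backslash D})\xrightarrow{\ \delta'\ }H^2_{D\backslash T}(\mo_{X\backslash T})\longrightarrow\cdots ,\]
whence $\Ima(\mr{res}_T)=\ker(\delta')$. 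Consequently, as soon as $H^2_{D\backslash T}(\mo_{X\backslash T})=0$ the map $\mr{res}_T$ is surjective and $\Ima(\Phi_{(T\subseteq D)})=\Ima(\delta)$. So the whole proof reduces to producing a proper closed subvariety $T\subsetneq D$ with $H^2_{D\backslash T}(\mo_{X\backslash T})=0$.

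For the choice of $T$: since $D$ is Cohen--Macaulay (an effective Cartier divisor on the smooth variety $X$), it has only finitely many associated points, the generic points of its irreducible components; by Bertini pick an ample effective divisor $H$ on $X$ whose support avoids all of them, and set $T:=(H\cap D)_{\red}$. Then $\dim T=\dim D-1$ (with $T=\emptyset$ when $\dim D=0$), so $T$ is a proper closed subvariety of $D$, and $D\backslash T$ is the complement in the projective scheme $D$ of the ample Cartier divisor $H\cap D$, hence affine. For each $n\ge1$ the coherent sheaf $\mo_X(nD)/\mo_X$ is annihilated by the ideal sheaf $\mo_X(-nD)$, so it is supported on the $n$-th infinitesimal neighbourhood $D_n$ of $D$; since $(D_n\backslash T)_{\red}=(D\backslash T)_{\red}$ is affine, $D_n\backslash T$ is itself affine, and therefore $H^1\bigl(X\backslash T,\,(\mo_X(nD)/\mo_X)|_{X\backslash T}\bigr)=H^1\bigl(D_n\backslash T,\,\mo_X(nD)/\mo_X\bigr)=0$. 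Passing to the colimit over $n$ in $\mc{H}^1_{D\backslash T}(\mo_{X\backslash T})=\varinjlim_n\,(\mo_X(nD)/\mo_X)|_{X\backslash T}$ gives $H^2_{D\backslash T}(\mo_{X\backslash T})=H^1\bigl(X\backslash T,\,\mo_X(\ast D)/\mo_X\bigr)=0$, which finishes the proof. The only step with real ingenuity is this last choice of $T$: excising a suitable hypersurface section of $D$ makes the entire ``discrepancy group'' $H^2_{D\backslash T}(\mo_{X\backslash T})$ between the two obstruction theories collapse. After that everything is bookkeeping with local cohomology sequences; the two points that deserve a little care are the affineness of the thickenings $D_n\backslash T$ (a Noetherian scheme is affine once its reduction is) and the identification of $\rho(\xi_D^t)$ with the classical obstruction class, which should be extracted from the proof of Theorem \ref{th:br01}.
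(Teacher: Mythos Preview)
Your proof is correct, and both you and the paper choose $T$ as the intersection of $D$ with a sufficiently ample divisor, but the arguments diverge from there. The paper (Theorem \ref{th:br03}) sets $E$ ample, $T=D.E$, and runs a Mayer--Vietoris diagram chase on the triple $(D,E,D\cup E)$: the key input is that $X\backslash E$ and $X\backslash(D\cup E)$ are affine, so $H^1(\mo_{X\backslash E})=H^1(\mo_{X\backslash(D\cup E)})=0$, which forces $\Phi_E$ and $\Phi_{D\cup E}$ to be injective, and then a chase through diagram \eqref{brill16} produces the required preimage in $H^1(\mo_{X\backslash T})$. You instead work entirely with the pair $(X\backslash T,\,D\backslash T)$ and prove directly that $H^2_{D\backslash T}(\mo_{X\backslash T})=0$, using that $D\backslash T$ and all its infinitesimal thickenings are affine; this makes $\mr{res}_T$ surjective and yields $\Ima(\Phi_{(T\subseteq D)})=\ker(\Phi_D)$ in one stroke. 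Your route is more self-contained and avoids the auxiliary divisor $E$ in the diagram chase; the paper's Mayer--Vietoris approach, on the other hand, is what later feeds into Theorem \ref{brnew01}(2), where the same diagram is reused with $E$ coming from a global line bundle $\mc{L}$.

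One small correction: the identification of $\rho(\xi_D^t)=\Phi_D\circ\pi_D\circ\mr{KS}(t)$ with the cup product $\mr{KS}(t)\cup[D]$ is not in the proof of Theorem \ref{brill04} (your Theorem \ref{th:br01}); it is recorded separately in \S\ref{sec:cup}, via \cite[Proposition 6.2]{b1}, together with the description of the Hodge locus in \S\ref{sec:hod}. Otherwise your bookkeeping is accurate, including the two points you flag (affineness passing to thickenings via the reduction, and commutation of $H^1$ with the filtered colimit on the Noetherian scheme $X\backslash T$).
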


In other words, the topological obstruction space is in fact 
contained in the cokernel of the morphism $\Phi_{(T \subseteq D)}$ 
mentioned above. The natural question to ask is, whether Theorem 
\ref{thintro1} holds for any proper, closed subvariety of the divisor $D$?
Before we answer this question, we fix a couple of definitions. 
Given a proper, closed subvariety $T$ of $D$, we say that $D$ is 
$T$-\emph{semi-regular} if Theorem \ref{thintro1} above
holds for such a choice of $T$. We say that $D$ is \emph{effective
Lefschetz} if it satisfies the following property: for any $t \,\in\, T_oB$,
the fundamental class $[D]$ deforms as a Hodge class along $t$ if and only if
a multiple of $D$ deforms as an effective Cartier divisor, as $X$ deforms 
along $t$. In this context the following is proved (see Theorem \ref{brnew01}):

\begin{thm}
 The divisor $D$ is $T$-semi-regular for every proper, closed 
 subvariety $T$ of $D$ if and only if $D$ is effective Lefschetz.
\end{thm}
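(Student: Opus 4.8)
The plan is to show that each of the two conditions in the statement is equivalent to the single identity $\mathcal{E}=\mathcal{H}$, where $\mathcal{H}\subseteq T_oB$ is the set of $t$ along which $[D]$ deforms as a Hodge class and $\mathcal{E}\subseteq T_oB$ is the set of $t$ along which some multiple $nD$ deforms as an effective Cartier divisor. By Theorem~\ref{th:br01} one has $\mathcal{E}=\{t : \xi^t_D=0\}$; moreover, a lift of $nD$ to $X_t$ produces a lift of $\mo_X(D)^{\otimes n}$, hence deforms the Hodge class $n[D]$, and therefore also $[D]$ (the obstruction to lifting $\mo_X(D)$ lies in the $\mb{C}$--vector space $H^2(\mo_X)$ and is additive in the line bundle), so we always have $\mathcal{E}\subseteq\mathcal{H}$; by definition $D$ is effective Lefschetz precisely when $\mathcal{E}=\mathcal{H}$. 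For a proper closed subvariety $T\subseteq D$ write $\mathcal{S}_T:=\{t : \xi^t_D\in\mr{image}(\Phi_{(T\subseteq D)})\}$, so that $D$ is $T$--semi-regular exactly when $\mathcal{S}_T=\mathcal{H}$. Thus everything reduces to proving that $\mathcal{S}_T=\mathcal{H}$ for all proper closed $T\subseteq D$ if and only if $\mathcal{E}=\mathcal{H}$.

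The first --- and only substantial --- step will be the inclusion $\mathcal{S}_T\subseteq\mathcal{H}$, valid for \emph{every} proper closed $T\subseteq D$; this is the easy direction of Theorem~\ref{thintro1} and makes no use of the special $T$ produced there. The second arrow of $\Phi_{(T\subseteq D)}$ is the connecting homomorphism $\delta\colon H^1(\mo_{X\backslash D})\to H^2_D(\mo_X)$ of the local cohomology sequence of the pair $D\subseteq X$, whose image equals the kernel of $\iota\colon H^2_D(\mo_X)\to H^2(\mo_X)$; hence $\mr{image}(\Phi_{(T\subseteq D)})\subseteq\ker\iota$. On the other hand, the construction of $\xi^t_D$ in Theorem~\ref{th:br01} identifies $\iota(\xi^t_D)\in H^2(\mo_X)$ with the classical (exponential sequence) obstruction to lifting $\mo_X(D)$, whose vanishing is equivalent to $t\in\mathcal{H}$. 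Therefore $t\in\mathcal{S}_T$ forces $\iota(\xi^t_D)=0$, i.e.\ $t\in\mathcal{H}$.

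Next I would pin down $\mathcal{S}_T$ for the smallest allowed $T$. Taking $T=\varnothing$, which is a proper closed subvariety of $D$ since $\dim\varnothing<\dim D$, the first arrow of $\Phi_{(\varnothing\subseteq D)}$ is the restriction $H^1(\mo_X)\to H^1(\mo_{X\backslash D})$, so $\Phi_{(\varnothing\subseteq D)}$ is a composite of two consecutive maps in the very same exact sequence and hence vanishes; thus $\mathcal{S}_\varnothing=\{t:\xi^t_D=0\}=\mathcal{E}$. (If a nonempty $T$ is preferred, the same holds for any $T$ with $\codim_X T\ge 3$: since $X$ is smooth, $\mo_X$ is Cohen--Macaulay, so $H^1(\mo_X)\to H^1(\mo_{X\backslash T})$ is an isomorphism and the computation is unchanged; such $T\subseteq D$ exists whenever $\dim X\ge 3$, and for surfaces one keeps $T=\varnothing$.) Together with the trivial inclusion $\mathcal{E}\subseteq\mathcal{S}_T$ for every $T$, this identifies both ends of the range of the family $\{\mathcal{S}_T\}_T$.

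The conclusion is then formal. If $\mathcal{E}=\mathcal{H}$, then for every proper closed $T\subseteq D$ one has $\mathcal{H}=\mathcal{E}\subseteq\mathcal{S}_T\subseteq\mathcal{H}$ by the previous two steps, so $\mathcal{S}_T=\mathcal{H}$ and $D$ is $T$--semi-regular. Conversely, if $D$ is $T$--semi-regular for every proper closed $T\subseteq D$, apply this with $T=\varnothing$ (or with $T$ a point, when $\dim X\ge 3$) to obtain $\mathcal{H}=\mathcal{S}_\varnothing=\mathcal{E}$, so $D$ is effective Lefschetz. I expect the main obstacle to be the first step: making precise, from the construction underlying Theorems~\ref{th:br01} and~\ref{thintro1}, that the arrow $H^1(\mo_{X\backslash D})\to H^2_D(\mo_X)$ occurring in $\Phi_{(T\subseteq D)}$ is indeed the local cohomology connecting map and that $\iota(\xi^t_D)$ coincides with the exponential sequence obstruction class. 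Once that compatibility is recorded, the remainder is bookkeeping with the exact sequences of local cohomology.
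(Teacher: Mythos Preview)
Your proof is correct and follows essentially the same approach as the paper. Both arguments hinge on taking $T=\varnothing$, observing that $\Phi_{(\varnothing\subseteq D)}$ is the composite of two consecutive maps in the local cohomology exact sequence and hence vanishes, so that $\varnothing$--semi-regularity reduces to the injectivity of $\Phi_D$ on $\mr{Obs}^{\pi}_{D,\mr{loc}}$; the converse uses that $\Phi_D$ factors through $\mr{coker}(\Phi_{(T\subseteq D)})$ for every $T$. Your repackaging via the subspaces $\mathcal{E}\subseteq\mathcal{S}_T\subseteq\mathcal{H}$ of $T_oB$ is a cosmetic but pleasant reformulation of the paper's argument on obstruction spaces. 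The compatibilities you flag as potential obstacles are in fact built into the paper's definitions: the second arrow of $\Phi_{(T\subseteq D)}$ is \emph{defined} via the short exact sequence \eqref{br5}, which yields the local cohomology long exact sequence of Lemma~\ref{ph14}; and $\Phi_D(\xi^t_D)=\mr{KS}(t)\cup[D]$ is exactly the content of \S\ref{sec:cup}, so there is nothing further to check.
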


We give an explicit example in Theorem \ref{brill22} of a divisor 
which is not effective Lefschetz. See Remark \ref{rem:ref} for a 
simpler example, given by the referee.

Let us now compare the above results with some classical results.
We will call an effective divisor $D \,\subseteq\, X$ 
 \emph{saturated} if for every first order infinitesimal deformation of $X$,
the divisor $D$ deforms as an effective Cartier divisor 
if and only if every positive 
multiple of $D$ deforms as an effective Cartier divisor.
By Theorem \ref{th:br01}, the divisor $D$ is saturated if and only if the natural 
morphism from the geometric obstruction space 
$\mbf{O}_D^{\pi} \,\subseteq\, H^1(\N_{D|X})$ (i.e.,
obstruction to the deformation of $D$ as an effective Cartier divisor)
to $H^2_D(\mo_X)$, induced by the natural morphism from $H^1(N_{D|X})$ to 
$H^2_D(\mo_X)$ (see \S~\ref{sec:cup}), is injective (see Corollary 
\ref{cor:sat}).
Note that, for saturated divisors, being effective Lefschetz is equivalent
to the injectivity of the natural morphism from the geometric
obstruction space $\mbf{O}^\pi_D$ to the topological obstruction space $\mr{Obs}^{\pi}_D\,\subseteq\, H^2(\mo_X)$
(i.e., the obstruction to deforming the fundamental class of $D$ as 
a Hodge class).
Note that semi-regular sub-varieties obey such an 
injectivity statement. Recall that the theory of semi-regular 
sub-varieties was first developed by Kodaira--Spencer \cite{kod1}
in the case of divisors, and subsequently
it was generalized for the higher codimensions by Bloch \cite{b1}.
However, semi-regularity is a rather rare phenomenon (see \cite{ink}).
In contrast, saturated divisors
are much more common than semi-regular divisors. For example,
a high enough multiple of any effective divisor is saturated.

{\bf{Application:}} One can immediately apply these results 
to study the jumping locus of families of 
linear systems. Consider a flat family of smooth, projective varieties $\mc{X}$ parameterized 
by a complex manifold $B$. Fix a point $o \in B$. Let $\mc{L}$ be an invertible sheaf over 
$\mc{X}$. Denote by $W$ the \emph{universal sub-locus} of $B$ containing the point $o$
such that for every $w \,\in\, W$, the dimension of
the linear system associated to
$\mc{L}_w\,:=\,\mc{L}|_{\mc{X}_w}$ is constant
and every sub-loci satisfying the same properties factor
through $W$ (see \S~\ref{rev2}
for a precise definition). We prove (see Theorem \ref{brill21}):

\begin{thm}
Denote by $\mc{L}_o$ the restriction of $\mc{L}$ to the fiber over $o$.
The tangent space at $o$ to $W$ coincides with that of the parameterizing space $B$ (i.e., up to
first order, the dimension of the linear system 
over the point $o \in B$ does not jump) if and 
only if every effective divisor $D$, in the linear system defined by $\mc{L}_o$,
is $T$-semi-regular for every proper, closed subvariety $T \,\subseteq\, D$.
\end{thm}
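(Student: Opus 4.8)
The plan is to reduce, via Theorem~\ref{brnew01}, to a deformation-theoretic computation of $T_oW$. By Theorem~\ref{brnew01}, the condition that every $D\in|\mc{L}_o|$ be $T$-semi-regular for all proper closed $T\subseteq D$ is equivalent to: every such $D$ is effective Lefschetz. On the other side, I would unwind the universal property of $W$ from \S\ref{rev2}: a tangent vector $t\in T_oB$ lies in $T_oW$ precisely when, for the first-order deformation $X_t$ of $X$ with the restricted invertible sheaf $\mc{L}_t:=\mc{L}|_{X_t}$, the direct image of $\mc{L}_t$ along $X_t\to\Spec k[\epsilon]$ remains free of rank $h^0(X,\mc{L}_o)$. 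By cohomology and base change this says that the restriction map $H^0(X_t,\mc{L}_t)\to H^0(X,\mc{L}_o)$ is surjective, and, via the short exact sequence $0\to\mc{L}_o\to\mc{L}_t\to\mc{L}_o\to 0$ on $X_t$, this is equivalent to the vanishing of the connecting homomorphism $\mu_t\colon H^0(X,\mc{L}_o)\to H^1(X,\mc{L}_o)$. Concretely, $t\in T_oW$ if and only if every effective $D\in|\mc{L}_o|$ lifts, as $X$ deforms along $t$, to an effective Cartier divisor of $X_t$ whose associated invertible sheaf is $\mc{L}_t$; thus $T_oW=T_oB$ if and only if this holds for every $t$ and every $D$.

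Next, since $\mc{L}_t$ is an invertible sheaf on $X_t$ restricting to $\mc{L}_o=\mo_X(D)$, the fundamental class $[D]$ deforms as a Hodge class along every $t\in T_oB$. Hence, for $D\in|\mc{L}_o|$, being effective Lefschetz reduces to: for every $t$, some positive multiple of $D$ lifts as an effective Cartier divisor of $X_t$; by Theorem~\ref{th:br01} this is the vanishing $\xi_D^t=0$ for all $t$. The forward implication of the theorem is then immediate: if $T_oW=T_oB$, the previous paragraph shows that for every $t$ every $D\in|\mc{L}_o|$ --- not merely a multiple --- lifts to an effective Cartier divisor, so $D$ is effective Lefschetz.

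The heart of the argument is the converse: assuming $\xi_D^t=0$ for all $D\in|\mc{L}_o|$ and all $t$, deduce that $\mu_t=0$ for every $t$. Two discrepancies must be bridged: from ``a multiple of $D$ lifts'' to ``$D$ itself lifts'', and from ``$D$ lifts with some deformation of $\mo_X(D)$'' to ``$D$ lifts with the prescribed sheaf $\mc{L}_t$''. As explained after Theorem~\ref{thintro1}, both discrepancies are controlled by $1$-cocycles of $\mo_{X\setminus T}$ with $\codim_X T\ge 2$: unwinding $\xi_D^t=0$ through the local-cohomology sequence, and using that $[D]$ is already a deforming Hodge class, shows that a cocycle $\zeta_D^t\in H^1(\mo_{X\setminus D})$ representing $\mu_t(s_D)$ extends over $X\setminus T$ for every proper closed $T\subseteq D$. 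I would then let $D$ vary over $|\mc{L}_o|$: a general member is integral and the members cover $X$ away from the base locus, so a Bertini-type reduction to a general $D$ together with the codimension bookkeeping of \S\ref{sec:cup} and the proof of Theorem~\ref{brnew01} forces $\zeta_D^t$ to extend over all of $X$, i.e., $\mu_t(s_D)=0$; since $\mu_t$ is linear and the $s_D$ span $H^0(X,\mc{L}_o)$, this gives $\mu_t=0$ and hence $t\in T_oW$. The main obstacle is exactly this last step --- using the whole linear system rather than a single divisor to kill simultaneously all the codimension-$\ge 2$ cocycles separating the intrinsic class $\xi_D^t$ from the rigid requirement that sections of $\mc{L}_o$ extend to $\mc{L}_t$.
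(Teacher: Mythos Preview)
Your overall strategy matches the paper's: reduce the $T$-semi-regularity condition via Theorem~\ref{brnew01} (the paper packages this into Corollary~\ref{brill09}), and characterize $T_oW$ by surjectivity of the restriction $\eta_t\colon H^0(\mc{L}_t)\to H^0(\mc{L}_o)$ using the flattening stratification, cohomology and base change, and Lemma~\ref{brill17}. Your forward implication is correct and is essentially the paper's argument.

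The genuine gap is in your converse, and you have located it precisely. Passing from ``some multiple of $D$ lifts'' to ``$D$ itself lifts'' is exactly the content of $D$ being \emph{saturated}. The paper does not close this gap by anything like the Bertini/variation argument you sketch. Rather, the statement it actually proves, Theorem~\ref{brill21}(2), reads ``$T_oW=T_oB$ if and only if every $D\in|\mc{L}_o|$ is \emph{saturated and} $T$-semi-regular for every proper closed $T\subseteq D$''; the introductory formulation you were handed simply drops ``saturated''. With ``saturated'' present, Corollary~\ref{brill09} gives at once that $D$ deforms as an effective Cartier divisor whenever $[D]$ deforms as a Hodge class, and since $\mc{L}$ is defined on all of $\mc{X}$ the latter is automatic; that is the paper's entire converse. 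Your attempt to manufacture saturation from $T$-semi-regularity alone, by letting $D$ run over $|\mc{L}_o|$ and extending cocycles across codimension-two loci, is not in the paper and would need substantially more than what you have written; in view of Remark~\ref{rem:ref}, one should not expect effective Lefschetz by itself to force $D$ (rather than a multiple) to lift.

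As for your second discrepancy---lifting $D$ with an arbitrary deformation of $\mo_X(D)$ versus with the prescribed $\mc{L}_t$---the paper does not isolate this point either; in its proof it simply equates ``$D$ deforms as an effective Cartier divisor of $X_t$'' with ``$s_D\in\Ima(\eta_t)$'' through Corollary~\ref{brill09}, without discussing the $H^1(\mo_X)$-ambiguity you raise.
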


\subsection*{List of frequently used notations}

We list a set of frequently used notations. Let $X$ be a smooth, 
projective variety and $D$ be an effective divisor in $X$. We then 
denote by:

\begin{enumerate}
\item $\mc{H}^i_D(-)$ the local cohomology sheaf defined in \S~ 
\ref{sec:supp}.
 \item $\lrcorner \{D\}$ the contraction map from the tangent sheaf 
 $\T_X$ to $\mc{H}^1_D(\mo_X)$, defined in \S~\ref{inn}.
 \item $\lrcorner \{D\}'$ the contraction map from the normal sheaf 
 $\N_{D|X}$ to $\mc{H}^1_D(\mo_X)$, defined in \S~\ref{inn}.
 \item $\pi_D$ is the morphism from $H^1(\T_X)$ to 
 $H^1(\mc{H}^1_D(\mo_X)) \cong H^2_D(\mo_X)$ induced by $\lrcorner \{D\}$. 
 \item $\pi'_D$ is the morphism from $H^1(\N_{D|X})$ to 
 $H^1(\mc{H}^1_D(\mo_X)) \cong H^2_D(\mo_X)$ induced by $\lrcorner \{D\}'$.
 \item $\Phi_D$ is the natural morphism from $H^2_D(\mo_X)$
 to $H^2(\mo_X)$ guaranteed by Lemma \ref{ph14} below.
 \item $\rho_{_D}$ is the natural morphism from $\T_X$ to $\N_{D|X}$
 defined in \S~\ref{sec:top}. 
 \item $\mr{Ob}_D$ from $H^1(\T_X)$ to $H^1(\N_{D|X})$ is 
 the natural obstruction map, induced by $\rho_{_D}$.
 \item $\mr{KS}$ denotes the Kodaira-Spencer map (see \S~\ref{sec:hod}).
 \item $\Phi_{(T \subseteq D)}$ for a proper closed subvariety $T$ of $D$ is 
 the composition \[H^1(\mo_{X\backslash T}) \to H^1(\mo_{X \backslash D})
 \to H^2_D(\mo_X).\]
 \item $\eta_T$ is the natural morphism from the cokernel of 
 $\Phi_{(T \subseteq D)}$ to $H^2(\mo_X)$ defined in \eqref{eq:eta}.
\end{enumerate}

\section{Preliminaries}\label{rev3}

In this section we recall the local description of an effective divisor $D$
contained in a smooth projective variety $X$. Using this, we define 
inner multiplications $\lrcorner \{D\}$ and $\lrcorner \{D\}'$. We end
the section with recollections of some 
basics on Hodge loci.

\subsection{Cohomology with support}\label{sec:supp}

Let $X$ be a smooth projective variety and $D \,\subseteq\, X$
be an effective divisor.
Let $\mc{F}$ be a sheaf of $\mo_X$-modules. 
A (local) section $s$ of $\mc{F}$ is said to be \emph{supported
over $D$} if $s$ is annihilated by a power of the ideal sheaf 
$\I_{D|X}$ of $D$ (as a subscheme of $X$). In particular, 
\emph{the sheaf $\underline{\Gamma}_D(\mc{F})$ of sections of $\mc{F}$
supported over $D$} is the sheaf associated to the presheaf which 
assigns to an open subset $U \,\subseteq\, X$, the $\mo_X(U)$-module
\[\{s \,\in\, \mc{F}(U)\,\mid\,\I_{D|X}(U)^n.s\,=\,0\ \ \text{ for some }\, n\,>\,0\}\, .\]
Note that $\underline{\Gamma}_D(\mc{F})$ is isomorphic to 
$\underline{\Gamma}_{D_{\red}}(\mc{F})$, where $D_{\red}$ is the reduced
scheme associated to $D$. Let $\Gamma_D(\mc{F})$ be the space of 
global sections of the sheaf $\underline{\Gamma}_D(\mc{F})$.
Denote by $\mc{H}^i_D(\mc{F})$ (respectively, $H^i_D(X,\mc{F})$) the 
right derived functor of the left exact 
functor $\underline{\Gamma}_D(-)$ 
(respectively, $\Gamma_D(-)$) applied to the sheaf $\mc{F}$.
See \cite[Chapter $1$]{brod} or \cite[\S~$1$]{grh} for further details.
We now recall the following useful result that encodes the difference 
between the cohomology groups $H^i(\mc{F})$ and $H^i_D(\mc{F})$:

\begin{lem}[{\cite[Corollary $1.1.9$]{grh}}]\label{ph14}
Let $\mc{F}$ be a quasi-coherent sheaf on $X$. Let $U\,:=\, X \backslash D$ be the complement
with $j\,:\,U \,\hookrightarrow\, X$ the inclusion. There is a long exact sequence
\[
0 \,\longrightarrow\, H^0_D(X,\,\mc{F})\,\longrightarrow\, H^0(X,\,\mc{F})\,\longrightarrow\,
 H^0(U,\,\mc{F}|_U) \,\longrightarrow\, H^1_D(X,\,\mc{F})
\]
\[
\longrightarrow\, H^1(X,\,\mc{F})\,\longrightarrow\, H^1(U,\,\mc{F}|_U)
\,\longrightarrow\,H^2_D(X,\mc{F})\,\longrightarrow\, \cdots \, .\]
Similarly, there is a short exact sequence of sheaves
 \[0 \,\longrightarrow\, \mc{H}^0_D(X,\,\mc{F})
\,\longrightarrow\, \mc{H}^0(X,\,\mc{F})\,\longrightarrow\, \mc{H}^0(U,\,\mc{F}|_U)
\, {\longrightarrow}\, \mc{H}^1_D(X,\,\mc{F}) \,\longrightarrow\, 0\, ,\]
and $\mc{H}^{i+1}_D(\mc{F}) \,\cong\, R^i j_*(\mc{F}|_U)$ for all $i\,>\,0$.
\end{lem}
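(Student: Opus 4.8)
The plan is to deduce all three assertions from a single short exact sequence of complexes obtained from an injective resolution of $\mc{F}$, exactly as in \cite[\S~1]{grh} and \cite[Chapter~1]{brod}. The first step is to record the local description of $\underline{\Gamma}_D$: for a quasi-coherent sheaf $\mc{F}$ on the Noetherian scheme $X$, a local section $s$ of $\mc{F}$ is annihilated by a power of $\I_{D|X}$ if and only if its support is contained in $D$. One implication is clear; for the other, work on an affine chart $\Spec A$ with $\mc{F}\,=\,\widetilde{M}$ and the section given by $s\,\in\, M$: the support condition reads $\I_{D|X}\,\subseteq\,\sqrt{\mr{Ann}_A(s)}$, and since $\I_{D|X}$ is finitely generated and $A$ is Noetherian, a suitable power of each generator kills $s$, so $\I_{D|X}^n\cdot s\,=\,0$ for $n\,\gg\, 0$. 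Hence $\underline{\Gamma}_D(\mc{F})$ is exactly the kernel of the restriction morphism $\mc{F}\,\longrightarrow\, j_*(\mc{F}|_U)$.

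Next, fix an injective resolution $\mc{F}\,\longrightarrow\,\mc{I}^\bullet$ in the category of $\mo_X$-modules. Using the standard facts that injective sheaves are flasque, that restriction along the open immersion $j$ preserves injectivity and flasqueness, that $j_*$ preserves flasqueness, and that $\underline{\Gamma}_D$ preserves flasqueness, one finds that each $\underline{\Gamma}_D(\mc{I}^n)$ is the kernel of a surjection $\mc{I}^n\,\twoheadrightarrow\, j_*(\mc{I}^n|_U)$ of flasque sheaves. Therefore \[0\,\longrightarrow\,\underline{\Gamma}_D(\mc{I}^\bullet)\,\longrightarrow\,\mc{I}^\bullet\,\longrightarrow\, j_*(\mc{I}^\bullet|_U)\,\longrightarrow\, 0\] is a short exact sequence of complexes whose terms are all flasque. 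By definition the complex $\underline{\Gamma}_D(\mc{I}^\bullet)$ has cohomology sheaves $\mc{H}^i_D(\mc{F})$; since $\mc{I}^\bullet|_U$ is an injective resolution of $\mc{F}|_U$, the complex $j_*(\mc{I}^\bullet|_U)$ has cohomology sheaves $R^ij_*(\mc{F}|_U)$ (with $R^0j_*(\mc{F}|_U)\,=\, j_*(\mc{F}|_U)$); and $\mc{I}^\bullet$ resolves $\mc{F}$, so its cohomology sheaf is $\mc{F}$ in degree $0$ and $0$ elsewhere. The long exact sequence of cohomology sheaves then collapses to the four-term exact sequence \[0\,\longrightarrow\,\mc{H}^0_D(\mc{F})\,\longrightarrow\,\mc{F}\,\longrightarrow\, j_*(\mc{F}|_U)\,\longrightarrow\,\mc{H}^1_D(\mc{F})\,\longrightarrow\, 0\] together with the isomorphisms $\mc{H}^{i+1}_D(\mc{F})\,\cong\, R^ij_*(\mc{F}|_U)$ for $i\,\ge\,1$, which is the sheaf-level part of the lemma.

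For the global long exact sequence, apply $\Gamma(X,-)$ to the same short exact sequence of complexes. Since all terms are flasque, hence acyclic for $\Gamma(X,-)$, the sequence of complexes of global sections is again short exact, and the three complexes compute respectively $H^i_D(X,\mc{F})$, $H^i(X,\mc{F})$ and $H^i(U,\mc{F}|_U)$: the middle one is immediate, the right one because $\Gamma\big(X,j_*(\mc{I}^\bullet|_U)\big)\,=\,\Gamma(U,\mc{I}^\bullet|_U)$ and $\mc{I}^\bullet|_U$ is an injective resolution of $\mc{F}|_U$, and the left one by Grothendieck's composite-functor formalism: $\Gamma_D\,=\,\Gamma(X,-)\circ\underline{\Gamma}_D$ and $\underline{\Gamma}_D$ carries injectives to flasque (hence $\Gamma(X,-)$-acyclic) sheaves, so $\Gamma_D(\mc{I}^\bullet)\,=\,\Gamma\big(X,\underline{\Gamma}_D(\mc{I}^\bullet)\big)$ legitimately computes $H^i_D(X,\mc{F})$. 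Taking cohomology yields the first displayed long exact sequence of the lemma.

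The only non-formal ingredient is the package of flasqueness-preservation and acyclicity statements invoked above, most notably that $\underline{\Gamma}_D$ takes flasque sheaves — in particular injective sheaves — to flasque sheaves, which is what makes a single injective resolution simultaneously compute $\mc{H}^\bullet_D(\mc{F})$, $R^\bullet j_*(\mc{F}|_U)$, $H^\bullet_D(X,\mc{F})$ and $H^\bullet(U,\mc{F}|_U)$. None of this is deep, and it is precisely the local-cohomology formalism developed in \cite[\S~1]{grh} and \cite[Chapter~1]{brod}; since the statement is in fact \cite[Corollary~1.1.9]{grh}, in the write-up I would merely record the identification $\underline{\Gamma}_D(\mc{F})\,=\,\ker\!\big(\mc{F}\to j_*(\mc{F}|_U)\big)$ and refer to those sources for the derived-functor bookkeeping.
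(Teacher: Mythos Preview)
Your proof is correct and is precisely the standard argument from \cite[\S~1]{grh}. The paper itself does not supply a proof of this lemma at all --- it is stated with attribution to \cite[Corollary~1.1.9]{grh} --- so your write-up simply fills in what the paper chose to cite.
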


\subsection{Local description of cohomology class of divisors}\label{sec:loc}

Let $D \,\subseteq\, X$ be as in \S~\ref{sec:supp}. Define $U\,:=\,X \backslash
D$, and let $j\,:\,U\, \hookrightarrow\, X$ be the natural inclusion. 
Using Lemma \ref{ph14}, we have the short exact sequence
\begin{equation}\label{brill03}
0\,\longrightarrow\, \Omega^1_X\,\longrightarrow\, j_*\Omega^1_{X\backslash D}
\,\stackrel{\delta}{\longrightarrow} \,\mc{H}^1_D(\Omega^1_X) \,\longrightarrow\, 0\, .
\end{equation}
Let $\{U_i\}_{i \in I}$ be an open affine covering of $X$ such that $D \, \cap \, U_i$ 
is defined by a single equation, say $f_i\,=\,0$ with $f_i \,\in\, \Gamma(U_i,\,\mo_X)$.
For $i \not= j$, since $f_i=\lambda_{ij}f_j$ over $U_i \cap U_j$
for some invertible 
regular section $\lambda_{ij}$ over $U_i \cap U_j$, 
we have by Leibniz rule,
\[\delta\left(\frac{df_i}{f_i}\right)\, =\, 
\delta\left(\frac{df_j}{f_j}\right)+\delta\left(\frac{d\lambda_{ij}}{\lambda_{ij}}\right)\]
over $U_i\cap U_j$.
As $d\lambda_{ij}/\lambda_{ij} \in \Gamma(U_i \cap U_j, \Omega^1_X)$, \eqref{brill03} implies that the last term of the above equality 
vanishes. Hence, the collection of sections 
\[\left\{ \delta\left(\frac{df_i}{f_i}\right)\right\}_{i \in I},\]
glue to give a global section, say
$\{D\}\,\in\, H^0(\mc{H}^1_D(\Omega^1_X))$.
We now observe that in the case $\mc{F}$ is locally-free, then the 
cohomology group $H^i_D(\mc{F})$ is isomorphic to $H^{i-1}(\mc{H}^1_D(\mc{F}))$. In fact, we recall a more general result.

\begin{prop}\label{ph15}
Let $X$ be a scheme, and $Z$ be a local complete intersection 
subscheme in $X$. Let $\mc{F}$ be a sheaf of abelian groups on $X$.
Then the spectral sequence with terms $$E_2^{p,q}\,=\,H^p(X,\,\mc{H}_Z^q(X,\,
\mc{F}))$$ converges to $H_Z^{p+q}(X,\,\mc{F})$. Furthermore, if $\mc{F}$ is a locally
free $\mo_X$--module, then \[H^{p+q}_Z(X,\,\mc{F})\,\cong\, H^p(X,\,\mc{H}^q_Z(X,\,
\mc{F})),\] where $q$ is the codimension of $Z$ in $X$ and $p \,\ge\, 0$.
\end{prop}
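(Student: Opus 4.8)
I would obtain the first assertion as the Grothendieck spectral sequence of the composition of functors $\Gamma_Z(X,-) \,=\, \Gamma(X,-)\circ\underline{\Gamma}_Z(-)$ on the category of sheaves of abelian groups on $X$. The only thing to verify is that $\underline{\Gamma}_Z$ carries injective objects to objects that are acyclic for $\Gamma(X,-)$, and the plan for this is the usual flasqueness argument: an injective sheaf of abelian groups is flasque, and $\underline{\Gamma}_Z$ preserves flasqueness. Indeed, if $\mc{F}$ is flasque and $W\,\subseteq\, V$ are open in $X$, a section $s\,\in\,\underline{\Gamma}_Z(\mc{F})(W)$ agrees with the zero section of $\mc{F}$ over $V\cap(X\backslash Z)$ on the overlap $W\cap(X\backslash Z)$ (where $s$ vanishes), hence glues to a section over $W\cup(V\cap(X\backslash Z))$, which extends to $V$ by flasqueness and lies in $\underline{\Gamma}_Z(\mc{F})(V)$; so $\underline{\Gamma}_Z(\mc{F})$ is flasque, in particular $\Gamma(X,-)$-acyclic. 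Since $R^q\underline{\Gamma}_Z\,=\,\mc{H}^q_Z(-)$ and $R^p\Gamma(X,-)\,=\,H^p(X,-)$, the resulting spectral sequence (equivalently, the hypercohomology spectral sequence of $R\Gamma(X,-)$ applied to $R\underline{\Gamma}_Z(\mc{F})$) reads $E_2^{p,q}\,=\,H^p(X,\mc{H}^q_Z(\mc{F})) \,\Longrightarrow\, R^{p+q}\Gamma_Z(X,\mc{F})\,=\,H^{p+q}_Z(X,\mc{F})$. No hypothesis on $Z$ or on $\mc{F}$ beyond $Z$ being closed is needed here.

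For the second assertion, I would reduce it to the vanishing $\mc{H}^i_Z(\mc{F})\,=\,0$ for all $i\,\ne\, c\,:=\,\codim(Z,X)$, valid when $\mc{F}$ is locally free and $Z$ is a local complete intersection of codimension $c$. Granting this, the spectral sequence of the first part has a single nonzero row $q\,=\,c$, hence degenerates at $E_2$, yielding $H^{p+c}_Z(X,\mc{F})\,\cong\,E_2^{p,c}\,=\,H^p(X,\mc{H}^c_Z(\mc{F}))$ for all $p\,\ge\,0$, which is precisely the claimed isomorphism after renaming $c\,=\,q$. The vanishing is local on $X$: cover $X$ by affine opens $V\,=\,\Spec A$ on which the ideal $\I_Z$ has the same radical as a regular sequence $f_1,\dots,f_c\,\in\, A$ (this is the definition of a codimension-$c$ local complete intersection) and on which $\mc{F}|_V\,\cong\,\mo_V^{\oplus n}$. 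Since local cohomology depends only on the radical of the defining ideal, $\mc{H}^i_Z(\mc{F})|_V$ is the sheaf associated to $H^i_{(f_1,\dots,f_c)}(A)^{\oplus n}$, and the module $H^i_{(f_1,\dots,f_c)}(A)$ is computed by the stable Koszul (\v{C}ech) complex on $f_1,\dots,f_c$. Because $f_1,\dots,f_c$ is an $A$-regular sequence, this complex has cohomology concentrated in the single degree $c$: the grade estimate kills degrees $<c$, and having $c$ generators kills degrees $>c$. Hence $\mc{H}^i_Z(\mc{F})|_V\,=\,0$ for $i\,\ne\,c$, and as the $V$'s cover $X$ we conclude $\mc{H}^i_Z(\mc{F})\,=\,0$ for $i\,\ne\,c$.

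The main (and essentially only non-formal) obstacle is this degree-concentration of $\mc{H}^\bullet_Z(\mc{F})$ along a local complete intersection, i.e. the classical computation of local cohomology with respect to an ideal generated by a regular sequence; I would cite \cite{brod} or \cite{grh} for the precise statements and for the identification of the sheaves $\mc{H}^i_Z(\mc{F})$ on an affine with algebraic local cohomology modules. Everything else is the formalism of derived functors and Grothendieck's spectral sequence of a composition, and the degeneration of the latter once only one row survives. A small point worth recording along the way is exactly the reduction to the radical noted above, since $\I_Z$ itself need not be globally (or even locally) generated by a regular sequence on the nose, only up to radical.
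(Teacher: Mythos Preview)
Your proof is correct and follows essentially the same route as the paper: the paper simply cites \cite[Proposition 1.4]{grh} for the spectral sequence and \cite[Theorem 3.5.7]{brun1} for the vanishing $\mc{H}^k_Z(\mc{F})=0$ for $k\neq q$, then notes the resulting degeneration, whereas you supply the standard arguments behind those citations (the flasqueness argument for the composite-functor spectral sequence, and the Koszul/\v{C}ech computation for the local vanishing). One minor remark: a local complete intersection means $\I_Z$ is itself locally generated by a regular sequence, not merely up to radical, so your ``up to radical'' reduction is more generous than needed---though of course harmless, since local cohomology only sees the radical.
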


\begin{proof}
The first statement is proven in \cite[Proposition $1.4$]{grh}.

If $\mc{F}$ is locally free, then \cite[Theorem $3.5.7$]{brun1} implies that $\mc{H}^k_Z(X,\,\mc{F})
\,=\,0$ for $k \,\not=\, q$. Since 
$$E_2^{p,q}\,=\,H^p(X,\,\mc{H}_Z^q(X,\,\mc{F}))\,\Rightarrow\, H^{p+q}_Z(X,\,\mc{F})\, ,$$
we conclude that $H^{p+q}_Z(X,\,\mc{F}) \,\cong\, H^p(X,\,\mc{H}^q_Z(\mc{F}))$.
This completes the proof.
\end{proof}

In particular, Proposition \ref{ph15} implies that $H^0(\mc{H}^1_D(\Omega^1_X))
\,\cong \,H^1_D(\Omega^1_X)$. By 
Lemma \ref{ph14}, there exists a natural morphism from 
$H^1_D(\Omega^1_X)$ to $H^1(\Omega^1_X)$. The image of $\{D\}$ in $H^1(\Omega^1_X)$
under this morphism is the cohomology class $[D]$ of $D$ (see \cite{fgag}).

\subsection{Inner multiplications with cohomology class of divisors}\label{inn}

Let $D \,\subseteq\, X$ be as in \S~\ref{sec:supp}. Denote by $\I_{D|X}$ the ideal sheaf of $D$ in $X$ and $i:D \hookrightarrow X$ the natural 
inclusion. Recall, 
\[\N_{D|X} \cong \Hc_X(\I_{D|X},\, i_*\mo_D) \mbox{ and } 
 \T_X \cong \Hc_X(\Omega^1_X,\, \mo_X).\]
Fix an 
open affine covering $\{U_i\}_{i \in I}$ as in \S\ref{sec:loc}
such that for each $i \in I$, the intersection $D \cap U_i$ is 
defined by a regular 
section, say $f_i \,\in\, \Gamma(U_i,\mo_X)$. 
On each $U_i$, a local section $\phi \in \Gamma(U_i,\T_X)$ 
(resp. $\phi \in \Gamma(U_i,\N_{D|X})$)
sends  $df_i \in \Gamma(U_i,\Omega^1_X)$ (resp. $f_i 
\in \Gamma(U_i,\I_{D|X})$) to $\phi(df_i) \in \Gamma(U_i, \mo_X)$
(resp. $\phi(f_i) \in \Gamma(U_i \cap D,\mo_D)$).
Using \eqref{brill03}, we have the short exact sequence
\begin{equation}\label{br5}
0 \,\longrightarrow\, \mo_X \,\longrightarrow\, j_*\mo_{X\backslash D} \,
\stackrel{\delta'}{\longrightarrow}\, \mc{H}^1_D(\mo_X) \,\longrightarrow\, 0\, .
\end{equation}
Define the contraction morphism
\[\lrcorner \{D\}\,:\,\mc{T}_X \,\longrightarrow\, \mc{H}^1_D(\mo_X) \, \, \, \, \, \, (\, \mr{respectively, }\, 
\, \, \lrcorner \{D\}'\,:\,\N_{D|X} \,\longrightarrow\, \mc{H}^1_D(\mo_X)),\]
which on each $U_i$ takes $\phi \in \Gamma(U_i,\T_X)$ (respectively, $\phi \in \Gamma(U_i,\N_{D|X})$)
to \[\delta'\left(\frac{\phi(df_i)}{f_i}\right) 
\in \Gamma(U_i,\mc{H}^1_D(\mo_X))\, \, \, \, \, \,
(\mbox{respectively, }\, \delta'\left(\frac{\widetilde{\phi(f_i)}}{f_i}\right)\, \in \Gamma(U_i,\mc{H}^1_D(\mo_X))),\]
where $\widetilde{\phi(f_i)}$ is a preimage of 
$\phi(f_i) \in \mo_D(U_i \cap D)$ under the
natural surjective morphism $$\mo_X(U_i) \,\longrightarrow\, \mo_D(U_i \, \cap \, D)\, .$$
Note that, any two pre-images of $\phi(f_i)$ differ by a section of 
$\I_{D|X}(U_i)$. Since $g/f_i$ is a regular section over $U_i$ for 
any section $g \in \I_{D|X}(U_i)$, \eqref{br5} implies
that $\delta'(g/f_i)=0$. 
As a result, $\lrcorner \{D\}'(\phi)$ does not 
depend on the choice of the pre-image. In other words,
$\lrcorner \{D\}'$ is well-defined. Similar argument shows that the 
morphisms $\lrcorner \{D\}$ and $\lrcorner \{D\}'$ does not depend 
on the choice of the local defining equations of $D$.

The morphisms $\lrcorner \{D\}$ and $\lrcorner \{D\}'$ induce on 
cohomology the morphisms 
\[\pi_D\,:\,H^1(\T_X)\,\longrightarrow \, H^1(\mc{H}^1_D(\mo_X)) \,\cong\, H^2_D(\mo_X) \mbox{ and }\]
\[\pi'_D\,:\,H^1(\N_{D|X}) \,\longrightarrow\, H^1(\mc{H}^1_D(\mo_X)) \,\cong\, H^2_D(\mo_X),\]
where the isomorphisms follow from Proposition \ref{ph15}.
As pointed out by the referee, the morphism $\pi'_D$ may
also be obtained by pairing $H^1(\N_{D|X})$ with 
$H^{n-2}(K_X|_D)$ and then applying formal duality. More 
precisely, by the adjunction formula (see \cite[Theorem III.$7.11$]{R1}), 
we have $\N_{D|X}^{\vee} 
\otimes K_D \cong K_X|_D$ (here $K_D$ denotes the dualizing sheaf of $D$).
By Serre duality, this implies 
\[H^1(\N_{D/X})^\vee\, \cong\, H^{n-2}(\N^\vee_{D/X}\otimes K_D)\, 
\cong\, H^{n-2}(K_X|_D). \]
By formal duality (see \cite[p. $48$, Proposition $5.2$]{hartr}),
we have an isomorphism of 
(not necessarily finite-dimensional) $\mb{C}$-vector spaces:
\[\varprojlim_m H^{n-2}(K_X \otimes \mo_X/\I_{D|X}^m) \,\cong \,
 H^2_D(\mo_X)^\vee,\]
where $\I_{D|X}$ is the ideal sheaf of $D$ in $X$.
By the definition of inverse limit, this gives us a natural morphism
\[H^2_D(\mo_X)^\vee \cong \varprojlim_m H^{n-2}(K_X \otimes \mo_X/\I_{D|X}^m)
\,\longrightarrow\, H^{n-2}(K_X|_D).\] As $H^{n-2}(K_X|_D)$ 
is isomorphic to $H^1(\N_{D|X})^\vee$, dualizing the resulting morphism 
from $H^2_D(\mo_X)^\vee$ to $H^1(\N_{D|X})^\vee$
will give us the morphism $\pi'_D$ above.

\subsection{Relations with the cup product}\label{sec:cup}
Let $X$ be a smooth, projective variety. Given two $\mo_X$-modules
$\mc{F}$ and $\mc{G}$,
there is a natural cup-product morphism 
\[H^p(\mc{F}) \otimes H^q(\mc{G}) \,\longrightarrow\, H^{p+q}(\mc{F} \otimes \mc{G})\, .\]
See \cite[\S~$5.3.2$]{v4} for a detailed description of this morphism.
In the case when $\mc{G}$ is the dual of $\mc{F}$, we can further 
compose this morphism by the natural pairing from $\mc{F} \otimes 
\mc{F}^{\vee}$ to $\mo_X$. More-precisely, we get a composed morphism
\[H^p(\mc{F}) \otimes H^q(\mc{F}^{\vee}) \,\longrightarrow\, H^{p+q}(\mc{F} \otimes 
 \mc{F}^{\vee}) \,\longrightarrow\, H^{p+q}(\mo_X)\, .\]
In the case, where $\mc{F}$ is the tangent sheaf $\T_X$, we have 
the morphism 
\[\bigcup\, :\, H^1(\T_X) \, \otimes\, H^1(\Omega^1_X) \,\longrightarrow\, H^2(\mo_X)\, .\]
We now observe that for a given effective divisor $D$ in $X$, the 
induced morphism $\bigcup (- \otimes [D])$, sometimes denoted by 
$- \bigcup [D]$, can be expressed in terms of the contraction morphism 
$\lrcorner \{D\}$ described in \S~\ref{inn} above. More precisely, 
using \cite[Proposition $6.2$]{b1}, we have 
for any $t \in H^1(\T_X)$ that $t \bigcup [D]$ coincides with 
the image of $t$ under the composition 
\[H^1(\T_X)\,\stackrel{\pi_D}{\longrightarrow}\,
H^2_D(\mo_X) \,\stackrel{\Phi_D}{\longrightarrow}\,
H^2(\mo_X)\, ,\]
where the morphism $\pi_D$ is induced by $\lrcorner \{D\}$
(see \S~\ref{inn}) and $\Phi_D$ follows from Lemma \ref{ph14}. 

\subsection{Hodge locus for divisors}\label{sec:hod}

Consider a family \[\pi\,:\,\mc{X} \,\longrightarrow\, B\] of smooth, projective 
varieties with a reference point $o \,\in \, B$. Let $X\,:=\,\pi^{-1}(o)$ be the
inverse image.
The differential of $\pi$ produces a short exact sequence of sheaves
\begin{equation}\label{fe1}
0 \,\longrightarrow\, \mc{T}_X \,\longrightarrow\, \mc{T}_{\mc{X}}\vert_X 
\,\longrightarrow\, \pi^* T_oB \,\longrightarrow\, 0\, ,
\end{equation}
where $T_oB$ is the tangent space to $B$
at $o$. The \emph{Kodaira--Spencer map} $$\mr{KS}\,:\,T_oB \,\cong\, H^0(\mc{X},\, \pi^*(T_oB))
\,\longrightarrow\, H^1(\mc{X},\, \mc{T}_X)$$ is the 
coboundary morphism in the
long exact sequence of cohomologies associated to the
short exact sequence in \eqref{fe1}.

Let $D$ be an effective divisor in $X$. 
The \emph{Hodge locus associated to the cohomology class} $[D]$, denoted $\NL([D])$,
is the subspace of $B$ consisting of points $b \,\in\, 
B$ such that $[D]$ deforms to a Hodge class on $\mc{X}_b\,=\,\pi^{-1}(b)$. By the 
Lefschetz $(1,1)$-theorem, every cohomology class of a divisor 
arises as the first Chern class of a line bundle. 
As a result deformations of the cohomology class $[D]$ is equivalent to the deformation of the line bundle $\mo_X(D)$. Hence, 
the subspace $\NL([D])$ can
also be seen as a subspace of $B$ over which
the invertible sheaf corresponding to $D$ deforms as an invertible sheaf. See
\cite[\S~$5.3.1$]{v5} for a scheme-theoretic description of $\NL([D])$.

Denote by $T_o\NL([D])$ the tangent space at $o$ of $\NL([D])$.
By \cite[Theorem $10.21$]{v4}, the tangent space $T_o\NL([D])$ to the Hodge locus 
$\NL([D])$ at the point $o$ consists of all 
$t\, \in \, T_oB$ such that $[D] \, \bigcup \, \mr{KS}(t)\, =\, 0$.
Given any $t \,\in\, T_oB$, we say that $[D]$ \emph{deforms as a 
Hodge class of} $X_t$ if $$[D] \bigcup \mr{KS}(t)\,=\,0\,,$$ where 
$X_t$ denotes the first order infinitesimal deformation of $X$ in the direction
of $t$.

\section{Torsion geometric obstruction}

Let $X$ be a smooth, projective variety and $D\,\subseteq\, X$
be an effective divisor.
In this section, we associate to a first order
infinitesimal deformation $X_t$ of $X$
an element of $H^2_D(\mo_X)$, the vanishing of which is equivalent to the deformation 
of a multiple of $D$ as an effective Cartier divisor of $X_t$ (Theorem 
\ref{brill04}).

\subsection{Cokernel of the inner multiplication $\lrcorner \{D\}'$}
Note that, there is a natural morphism from $\mo_X$ to $\mo_X(D)$
defined by multiplication by a global section, say
$s_D \,\in\, H^0(\mo_X(D))$.  Then, for any $n>0$, the natural morphism 
from $\mo_X$ to $\mo_X(nD)$ is defined by multiplication by $s_D^n$.
For simplicity of notation, we denote by 
\[s_D^n\,:\, \mc{H}^1_D(\mo_X) \,\longrightarrow\, \mc{H}^1_D(\mo_X(nD))\]
the morphism induced by multiplication by $s_D^n$. We then 
observe that the morphism $s_D^n$ is surjective with kernel isomorphic 
to $\N_{nD|X}$:

\begin{lem}\label{lef02}
Let $X$ be a smooth projective variety, and let $D\,\subseteq\, X$ be an effective divisor. 
For every $n \ge 1$, there is a short exact sequence of the form 
\begin{equation}\label{brill02}
0 \,\longrightarrow\, \N_{nD|X}\,\xrightarrow{\lrcorner \{nD\}'}\,
\mc{H}^1_D(\mo_X) \,\xrightarrow{s_D^n}\,
\mc{H}^1_D(\mo_X(nD)) \,\longrightarrow\, 0,
\end{equation}
where $\lrcorner \{nD\}'$ is the inner multiplication described in \S~\ref{inn}.
\end{lem}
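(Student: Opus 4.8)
The plan is to work locally on the affine cover $\{U_i\}_{i\in I}$ chosen in \S\ref{sec:loc}, where $D\cap U_i = \{f_i = 0\}$, and to identify all three sheaves in \eqref{brill02} by their local sections. Over $U_i$, the sheaf $\mc{H}^1_D(\mo_X)$ is generated (as in \eqref{br5}) by classes of the form $\delta'(g/f_i^m)$ for $g \in \mo_X(U_i)$ and $m>0$, modulo the relation that such a class vanishes when $g/f_i^m$ already lies in $\mo_X(U_i)$, i.e.\ when $f_i^m \mid g$. The multiplication map $s_D^n$ sends $\delta'(g/f_i^m)$ to the corresponding class computed inside $\mo_X(nD)$; since $s_D^n$ is locally multiplication by $f_i^n$ (up to the transition cocycle for $\mo_X(nD)$), a local section of $\mc{H}^1_D(\mo_X(nD))$ is a class $\delta'(g/f_i^{m-n})$, and surjectivity of $s_D^n$ at the sheaf level is then immediate: every local generator of the target lifts to one of the source. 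The first thing I would do, therefore, is make precise this local model and verify that $s_D^n$ is a well-defined surjection of sheaves.

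Next I would compute the kernel. A local section $\delta'(g/f_i^m)$ of $\mc{H}^1_D(\mo_X)$ dies under $s_D^n$ precisely when $f_i^n\cdot g/f_i^m = g/f_i^{m-n}$ becomes regular in $\mo_X(nD)$ over $U_i$, which (after untwisting) means $f_i^{m-n}\mid g$, i.e.\ $g = f_i^{m-n}h$ for some $h$. Such a class equals $\delta'(h/f_i^n)$, so the kernel over $U_i$ is exactly $\{\delta'(h/f_i^n) : h \in \mo_X(U_i)\}$, and $\delta'(h/f_i^n)=0$ iff $f_i^n \mid h$, i.e.\ iff $h$ maps to zero in $\mo_X/\I_{nD|X} = \mo_{nD}$. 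This exhibits the kernel sheaf as the image of $h \mapsto \delta'(h/f_i^n)$, with $h$ ranging over $\mo_X(U_i)/\I_{nD|X}(U_i)$. Comparing with \S\ref{inn}: the inner multiplication $\lrcorner\{nD\}'$ sends a local section $\phi$ of $\N_{nD|X} = \Hc_X(\I_{nD|X}, i_*\mo_{nD})$ to $\delta'(\widetilde{\phi(f_i^n)}/f_i^n)$, and since $f_i^n$ is a local generator of $\I_{nD|X}$, the assignment $\phi \mapsto \phi(f_i^n) \bmod \I_{nD|X}$ identifies $\N_{nD|X}|_{U_i}$ with $(\mo_X/\I_{nD|X})|_{U_i}$. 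Under this identification $\lrcorner\{nD\}'$ becomes exactly the map $h\mapsto \delta'(h/f_i^n)$, so its image is the kernel of $s_D^n$, and its injectivity follows because $\delta'(h/f_i^n)=0 \iff f_i^n\mid h \iff h \equiv 0$ in $\mo_{nD}$.

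Finally I would check that these local identifications are independent of the choices and patch to a global statement of sheaves: the transition functions $f_i = \lambda_{ij} f_j$ on overlaps introduce the cocycle $\{\lambda_{ij}\}$, and one verifies (as in the well-definedness argument of \S\ref{inn}, using that $d\lambda_{ij}/\lambda_{ij}$ is regular, or more directly that $\lambda_{ij}$ is a unit) that both the identification $\N_{nD|X} \cong \ker(s_D^n)$ and the surjectivity are compatible with these transitions, hence globalize. The main obstacle I anticipate is purely bookkeeping rather than conceptual: keeping the twisting by $\mo_X(nD)$ straight when comparing $\delta'$ for $\mo_X$ versus $\mo_X(nD)$, and making sure the identification $\N_{nD|X}|_{U_i} \cong (\mo_X/\I_{nD|X})|_{U_i}$ is the canonical one compatible with the definition of $\lrcorner\{nD\}'$ from \S\ref{inn}; once the local dictionary is set up carefully, exactness is essentially the elementary fact that for the regular element $f_i^n$ one has a short exact sequence $0 \to \mo_X/\I_{nD|X} \to \mo_X[1/f_i]/\mo_X \xrightarrow{f_i^n} \mo_X[1/f_i]/\mo_X(nD) \to 0$ relating the local cohomology models.
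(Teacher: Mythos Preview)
Your proposal is correct: the local computation you outline does establish exactness, and your identification of the kernel with $\N_{nD|X}$ via $\phi\mapsto\phi(f_i^n)$ matches the definition of $\lrcorner\{nD\}'$ in \S\ref{inn}. The final short exact sequence you write down on $U_i$,
\[
0 \,\longrightarrow\, \mo_{U_i}/(f_i^n) \,\longrightarrow\, \mo_{U_i}[1/f_i]/\mo_{U_i} \,\longrightarrow\, \mo_{U_i}[1/f_i]/f_i^{-n}\mo_{U_i} \,\longrightarrow\, 0,
\]
is exactly the content of the lemma.

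The paper takes a slightly different, more global route: rather than computing locally and then patching, it writes down the commutative diagram of sheaves
\[
\xymatrix{
0 \ar[r] & \mo_X \ar[r] \ar[d]^{s_D^n} & j_*\mo_{X\backslash D} \ar[d]^{\mr{id}} \ar[r] & \mc{H}^1_D(\mo_X) \ar[d]^{s_D^n} \ar[r] & 0\\
0 \ar[r] & \mo_X(nD) \ar[r] & j_*\mo_{X\backslash D} \ar[r] & \mc{H}^1_{D}(\mo_X(nD)) \ar[r] & 0 }
\]
(both rows are instances of \eqref{br5}), observes that the cokernel of the left vertical map is $\N_{nD|X}$ via the short exact sequence $0\to\mo_X\xrightarrow{s_D^n}\mo_X(nD)\to\N_{nD|X}\to 0$, and then applies the Snake Lemma. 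This yields the short exact sequence \eqref{brill02} in one stroke, with no need to verify compatibility with transition functions. Your argument is the Snake Lemma unwound on each $U_i$; the paper's packaging simply absorbs the patching step into the fact that the diagram is already one of global sheaves. Either way works, but the Snake Lemma version spares you the bookkeeping you yourself flagged as the main obstacle.
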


\begin{proof}
Let $j\,:\,X \backslash D \,\longrightarrow\, X$ be the natural inclusion. Consider the
commutative diagram of short exact sequences
\begin{equation}\label{lef01}
\xymatrix{
0 \ar[r] & \mo_X \ar[r]^{\hspace{15mm}} \ar[d]^{s_D^n} & j_*\mo_{X\backslash D} \ar[d]^{\mr{id}} \ar[r] & \mc{H}^1_D(\mo_X) \ar[d]^{s_D^n} \ar[r] & 0\\
0 \ar[r] & \mo_X(nD) \ar[r] & j_*\mo_{X\backslash D} \ar[r] 
& \mc{H}^1_{D}(\mo_X(nD)) \ar[r] & 0 }
\end{equation}
where the first horizontal exact sequence is the one in
\eqref{br5} and the second one is obtained after 
replacing $\mo_X$ in \eqref{br5} by $\mo_X(nD)$ (use the identification $\mo_{X \backslash D}
\,\cong\, \mo_X(nD)|_{X \backslash D}$). Let $\{U_i\}_{i \in I}$ be an affine open covering 
of $X$ such that for each $i \,\in\, I$, the
intersection $D\cap U_i$ is defined by exactly one equation, say $f_i\, \in
\,\Gamma(U_i,\,\mo_X)$; the equation is unique up to multiplication by units.
Consider now the short exact sequence
 \[0 \,\longrightarrow\, \mo_X \,\xrightarrow{s_D^n}\, \mo_X(nD) \,\longrightarrow\, \N_{nD|X}
\,\longrightarrow\, 0\, ,\]
where the morphism from $\mo_X(nD)$ to $\N_{nD|X}$ is defined over each open set $U_i$
by sending any $g \,\in \,\Gamma(U_i,\,\mo_X(nD))$ to the morphism $\phi \,\in\,
\mr{Hom}_{_{U_i}}(\mo_X(-nD)|_{U_i}, \,\mo_{nD \cap U_i})$ satisfying the condition
 \[\phi(f_i^n)\,=\,gf_i^n \ \ \mod\ \mo_X(-nD)|_{U_i}\, .\]
 Using the Snake lemma and diagram chase applied to \eqref{lef01}, we then get the 
 short exact sequence
 \[0 \,\longrightarrow\, \N_{nD|X}\,\xrightarrow{\lrcorner \{nD\}'}\,
\mc{H}^1_D(\mo_X) \,\xrightarrow{s_D^n}\,
\mc{H}^1_D(\mo_X(nD)) \,\longrightarrow\, 0.\]
This proves the lemma.
\end{proof}

\subsection{Geometric obstruction}\label{sec:top}
Let $\pi\,:\,\mc{X}\,\longrightarrow\, B$ be a flat family of smooth projective 
varieties with a fixed base point $o \,\in\, B$. Let $\mc{X}_b\,:=\,\pi^{-1}(b)$
be the inverse image of any $b \,\in \,B$, and set $X\,:=\,\pi^{-1}(o)$.

\begin{note}\label{note:lef02}
Let $D \,\subseteq\, X$ be an effective divisor. 
There is a natural morphism 
\[\rho_{_D}\,:\, \T_X \,\longrightarrow\, \N_{D|X}\]
obtained as a composition of the restriction morphism from $\T_X$
to $\T_X|_D$ with the morphism from $\T_X|_D$ to $\N_{D|X}$ arising
as the dual of the restriction to $D$ of the morphism 
\[\mo_X(-D) \,\stackrel{s_D}{\longrightarrow}\,
\mo_X\,\stackrel{d}{\longrightarrow}\, \Omega^1_X,\]
where $s_D$ is the global section of $\mo_X(D)$ corresponding to the 
divisor $D$. Let
\[\mr{Ob}_D\, :\, H^1(\T_X)\, \longrightarrow\, H^1(\N_{D|X})\]
be the homomorphism of cohomology groups induced by $\rho_{_D}$. 
\end{note}

Recall, the morphism $\pi_D$ from $H^1(\T_X)$ to $H^2_D(\mo_X)$
obtained by applying the cohomology functor to the contraction 
morphism $\lrcorner \{D\}$ from $\T_X$ to $\mc{H}^1_D(\mo_X)$
described in \S~\ref{inn}. We now observe that the image of $\pi_D$
contains the obstruction to the deformation of a multiple of $D$.

\begin{thm}\label{brill04}
Let $t \,\in \,T_oB$ and $D\, \subseteq\, X$ an effective divisor. 
Then, $\pi_D \circ \mr{KS}(t)\,=\,0$ if and only if there exists an integer $n\,>\,0$ such that $nD$ 
deforms as an effective Cartier divisor in the first order 
infinitesimal deformation $X_t$ of $X$ corresponding to $t$. 
\end{thm}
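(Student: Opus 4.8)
\textbf{Proof plan for Theorem \ref{brill04}.}
The plan is to translate the deformation-theoretic statement into the vanishing of an obstruction class living in a local cohomology group, and then to identify that class with $\pi_D\circ\mr{KS}(t)$. First I would recall that, by the standard deformation theory of pairs (a scheme together with a Cartier divisor), the obstruction to lifting an effective Cartier divisor $D\subseteq X$ to the first order deformation $X_t$ lies in $H^1(\N_{D|X})$, and is precisely $\mr{Ob}_D(\mr{KS}(t))$, where $\mr{Ob}_D$ is induced by the morphism $\rho_{_D}\colon\T_X\to\N_{D|X}$ of Notation \ref{note:lef02}; this is the classical computation, e.g.\ as in \cite[\S~3.3]{S1}. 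More generally, the same argument applied to $nD$ shows that the obstruction to lifting $nD$ as an effective Cartier divisor is $\mr{Ob}_{nD}(\mr{KS}(t))\in H^1(\N_{nD|X})$. Hence $nD$ deforms for some $n>0$ if and only if $\mr{KS}(t)$ lies in the kernel of $\mr{Ob}_{nD}$ for some $n$.

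Next I would assemble these for varying $n$ using Lemma \ref{lef02}. That lemma gives, for each $n\ge 1$, a short exact sequence
\[
0\,\longrightarrow\,\N_{nD|X}\,\xrightarrow{\lrcorner \{nD\}'}\,\mc{H}^1_D(\mo_X)\,\xrightarrow{s_D^n}\,\mc{H}^1_D(\mo_X(nD))\,\longrightarrow\,0,
\]
so on $H^1$ we obtain $H^1(\N_{nD|X})\xrightarrow{\pi'_{nD}}H^2_D(\mo_X)\to H^1(\mc{H}^1_D(\mo_X(nD)))$. The key point is the compatibility $\pi_D = \pi'_{nD}\circ\mr{Ob}_{nD}$ as maps $H^1(\T_X)\to H^2_D(\mo_X)$ — equivalently, at the sheaf level, $\lrcorner \{D\} = \lrcorner \{nD\}'\circ\rho_{_{nD}}$. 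This I would check locally on the affine cover $\{U_i\}$ where $D\cap U_i=\{f_i=0\}$: the composite sends $\phi\in\T_X(U_i)$ first to $\rho_{_{nD}}(\phi)\in\N_{nD|X}(U_i)$, which by the definition of $\rho$ is the map $f_i^n\mapsto n f_i^{n-1}\phi(df_i)$ modulo $\mo_X(-nD)$, and then $\lrcorner \{nD\}'$ sends it to $\delta'\!\left(n f_i^{n-1}\phi(df_i)/f_i^n\right) = \delta'\!\left(n\,\phi(df_i)/f_i\right)$; since $\delta'$ is $\mo_X$-linear and $\delta'(\phi(df_i)/f_i)=\lrcorner \{D\}(\phi)$, the scalar $n$ is a unit in characteristic zero, so the image is $\lrcorner \{D\}(\phi)$ up to the fixed nonzero scalar $n$, i.e.\ the two maps agree up to an automorphism of the target and in particular have the same image and the same kernel-preimage behaviour.

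Putting the pieces together: $\pi_D(\mr{KS}(t))=0$ iff $\pi'_{nD}(\mr{Ob}_{nD}(\mr{KS}(t)))=0$ for every $n$; by exactness of the sequence from Lemma \ref{lef02}, $\pi'_{nD}(\mr{Ob}_{nD}(\mr{KS}(t)))=0$ iff $\mr{Ob}_{nD}(\mr{KS}(t))$ lies in the image of the coboundary $H^0(\mc{H}^1_D(\mo_X(nD)))\to H^1(\N_{nD|X})$ — and here one uses that for $n\gg 0$ the relevant obstruction to lifting $nD$ becomes the genuinely zero class, because high multiples acquire enough sections; more precisely, I would argue that if $\pi_D(\mr{KS}(t))=0$ then, by the exact sequence, $\mr{Ob}_{nD}(\mr{KS}(t))$ is killed after multiplying by $s_D^m$ for suitable $m$, and since $\mr{Ob}_{(n+m)D}=s_D^m\circ(\text{something})\circ\mr{Ob}_{nD}$ at the level of normal sheaves it follows that $\mr{Ob}_{(n+m)D}(\mr{KS}(t))=0$, giving a multiple that lifts. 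Conversely, if $nD$ lifts then $\mr{Ob}_{nD}(\mr{KS}(t))=0$, hence $\pi_D(\mr{KS}(t))=\pi'_{nD}(0)=0$ up to the unit $n$. The main obstacle I anticipate is precisely this last bookkeeping: making rigorous the passage ``the $n$-th obstruction dies in $H^2_D(\mo_X)$, therefore some actual higher multiple lifts,'' which requires carefully tracking how the exact sequences of Lemma \ref{lef02} for different $n$ fit into a compatible system and exploiting that $\bigcup_n \ker(\pi'_{nD})$ exhausts the preimage of $0$; everything else (the local computation identifying $\lrcorner \{D\}$ with $\lrcorner \{nD\}'\circ\rho_{_{nD}}$, and the standard identification of the geometric obstruction with $\mr{Ob}_{nD}\circ\mr{KS}$) is routine.
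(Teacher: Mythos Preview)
Your proposal is correct and follows essentially the same route as the paper: both reduce to the identity $\pi_D=\pi'_{nD}\circ\mr{Ob}_{nD}$ (up to the unit $n$) via the local computation you describe, invoke Lemma \ref{lef02}, and then use the commutative diagram of long exact sequences for varying $n$ to pass from vanishing in $H^2_D(\mo_X)$ to vanishing of $\mr{Ob}_{nD}\circ\mr{KS}(t)$ for some $n$. The ``main obstacle'' you flag is exactly where the paper spends its one concrete sentence: if $\mr{Ob}_D\circ\mr{KS}(t)$ is the coboundary of some $f\in H^0(\mc{H}^1_D(\mo_X(D)))$, then for $n\gg0$ the element $s_D^{n-1}f$ extends to a regular section of $\mo_X(nD)$ and hence vanishes in $H^0(\mc{H}^1_D(\mo_X(nD)))$, which by the diagram chase forces $\mr{Ob}_{nD}\circ\mr{KS}(t)=0$.
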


\begin{proof}
Take any $t \,\in\, T_oB$. Denote by $X_t$ the first order infinitesimal 
deformation of $X$ corresponding to $t$. Recall, for any
positive integer $n\,>\,0$, the effective divisor $nD$ deforms as an 
effective Cartier divisor in $X_t$ if and only if $\mr{Ob}_{nD}
\circ \mr{KS}(t)$ vanishes (see \cite[Theorem $6.2$]{R3}). Therefore,
to prove the theorem, we need to compare the obstruction
morphisms $\mr{Ob}_D$ and $\mr{Ob}_{nD}$, where $n$ is some positive
integer. Let $s_D$ be the global section of $\mo_X(D)$ corresponding to 
the divisor $D$.
Now, for any regular local section $f$ of $\mo_X$, we have
\[d(fs_D^n)\,=\,ns_D^{n-1}fd(s_D) + s_D^ndf.\]
Restricting to $nD$, the last term of the equality vanishes.
We then get the commutative diagram:
\begin{equation}\label{diag01}
\xymatrixcolsep{5pc}
\xymatrix{
\T_X\, \ar[r]^{\rho_{_D}} \ar[d]^{\times n}_{\wr}\, &\, \N_{D|X}\, \ar[d]^{s_D^{n-1}}\\
\T_X \, \ar[r]^{\rho_{_{nD}}}\, & \, \N_{nD|X}}
\end{equation}
where the morphism from $\N_{D|X}$ to 
$\N_{nD|X}$ is induced by multiplication by $s_D^{n-1}$. This relates
$\mr{Ob}_D$ to $\mr{Ob}_{nD}$. Now, by
Lemma \ref{lef02} we have the following diagram of 
short exact sequences:
{\small \begin{equation}\label{diag02}
 \xymatrixcolsep{5pc}
\xymatrix{
0 \ar[r] & \N_{D|X} \, \ar[r]^{\lrcorner \{D\}'} \ar[d]^{s_D^{n-1}}\, &\, \mc{H}^1_D(\mo_X)\, \ar[d]^{\mr{id}}_{\wr} \ar[r]^{s_D} \, & \, \mc{H}^1_D(\mo_X(D))\, \ar[r] \ar[d]^{s_D^{n-1}} & 0 \\
0 \ar[r] & \N_{nD|X} \, \ar[r]^{\lrcorner \{nD\}'}\, & \, \mc{H}^1_D(\mo_X)\, \ar[r]^{s_D^n}\, &\, \mc{H}^1_D(\mo_X(nD)) \ar[r] & 0}
\end{equation}}
Note that, the contraction morphism $\lrcorner \{D\}$
from $\T_X$ to $\mc{H}^1_D(\mo_X)$ factors as 
\[\T_X \xrightarrow{\rho_{_D}} \N_{D|X} \xrightarrow{
 \lrcorner \{D\}'} \mc{H}^1_D(\mo_X).\]
 This implies $\pi'_D \circ \mr{Ob}_D=\pi_D$.
The long exact sequence associated to \eqref{diag02}
then implies that $\pi_D \circ \mr{KS}(t)\,=\,0$
if and only if there exists $f \in H^0(\mc{H}^1_D(\mo_X(D)))$
such that the image of $f$ under the coboundary map 
\[H^0(\mc{H}^1_D(\mo_X(D))) \,\longrightarrow\, H^1(\N_{D|X})\]
equals $\mr{Ob}_D \circ \mr{KS}(t)$.
Observe that given any global section $f$ of 
$\mc{H}^1_D(\mo_X(D))$ there exists an integer $N\,>\,0$ such that 
for all $n \ge N$, $fs_D^{n-1}$ extends to a 
regular section of $\mo_X(nD)$, 
hence defines a zero section in 
$H^0(\mc{H}^1_D(\mo_X(nD)))$ by the exact sequence given in 
Lemma \ref{ph14}. 
The theorem then follows by a simple diagram chase of the diagram 
of long exact sequences arising from \eqref{diag02}.
This proves the theorem.
\end{proof}

Theorem \ref{brill04} motivates the following definition.

\begin{defi}
For any $t \,\in \,T_oB$, denote by $X_t$ the first order infinitesimal deformation of $X$ 
corresponding to $t$.
Let $D \,\subseteq\, X$ be an effective divisor. We say that $D$ \emph{is saturated}, along $B$,
if for every $t \,\in\, T_oB$, the divisor $D$ lifts as
an effective Cartier divisor in $X_t$ if and only if $nD$ lifts as an effective 
Cartier divisor in $X_t$ for every $n \,\ge \,1$.
\end{defi}

\begin{cor}\label{cor:sat}
Let $D \,\subseteq\, X$ be an effective divisor. Then $D$ is saturated if and only if 
\[\Ima(\mr{Ob}_D \circ \mr{KS}) \cap \ker(\pi'_D\,:\, H^1(\N_{D|X}) \,\longrightarrow\, 
H^1(\mc{H}^1_D(\mo_X)))=0. \]
\end{cor}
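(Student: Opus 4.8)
The plan is to unwind the definition of saturation in terms of the obstruction maps already discussed in the proof of Theorem \ref{brill04}. Recall that $D$ deforms as an effective Cartier divisor in $X_t$ precisely when $\mr{Ob}_D \circ \mr{KS}(t) = 0$ in $H^1(\N_{D|X})$, while $nD$ deforms for all $n \ge 1$ precisely when $\pi_D \circ \mr{KS}(t) = 0$ in $H^2_D(\mo_X)$ (this is the content of Theorem \ref{brill04}, together with the observation there that $\pi_D \circ \mr{KS}(t) = 0$ is equivalent to some multiple deforming, hence — since $nD$ deforms implies $mnD$ deforms — to all sufficiently divisible multiples deforming, and in fact all multiples once one untangles the direct system). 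Using the factorization $\pi'_D \circ \mr{Ob}_D = \pi_D$ established in the proof of Theorem \ref{brill04}, the vanishing of $\pi_D \circ \mr{KS}(t)$ is equivalent to $\mr{Ob}_D \circ \mr{KS}(t) \in \ker(\pi'_D)$.

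With this dictionary in hand, the corollary becomes a purely formal statement about two linear maps. Write $v_t := \mr{Ob}_D \circ \mr{KS}(t) \in H^1(\N_{D|X})$ for $t \in T_oB$. Then: $D$ is saturated means that for every $t$, $v_t = 0 \iff v_t \in \ker(\pi'_D)$. One implication is automatic (if $v_t = 0$ then $v_t \in \ker \pi'_D$), so saturation is equivalent to the statement that for every $t$, $v_t \in \ker(\pi'_D) \implies v_t = 0$; that is, no nonzero element of the form $v_t$ lies in $\ker(\pi'_D)$, i.e. $\Ima(\mr{Ob}_D \circ \mr{KS}) \cap \ker(\pi'_D) = 0$. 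First I would state these two translations precisely, then observe that the corollary is the conjunction of them, and conclude.

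The main point requiring care is the passage from "$nD$ deforms for a single large $n$" (which is literally what Theorem \ref{brill04} provides, via $\pi_D \circ \mr{KS}(t) = 0$) to "$nD$ deforms for \emph{every} $n \ge 1$" (which is what the definition of saturated demands). The forward direction is harmless: if $\pi_D\circ\mr{KS}(t)=0$, then by the commuting diagram \eqref{diag02} and the surjectivity of $s_D^{n-1}$ at the level of $\mc H^1_D$, the class $\mr{Ob}_{nD}\circ\mr{KS}(t)$ is the image of $\mr{Ob}_D\circ\mr{KS}(t)$ under $\N_{D|X}\to\N_{nD|X}$, and chasing \eqref{diag01}--\eqref{diag02} shows it vanishes for \emph{every} $n$, not merely for large $n$ — so in fact $\pi_D\circ\mr{KS}(t)=0$ already forces all multiples $nD$ to deform. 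Conversely, if $nD$ deforms for all $n\ge 1$ (or even just for one $n$), then $\mr{Ob}_{nD}\circ\mr{KS}(t)=0$, and since $\pi_D=\pi'_{nD}\circ\mr{Ob}_{nD}$ we get $\pi_D\circ\mr{KS}(t)=0$. I expect this reconciliation — verifying that the single-$n$ and all-$n$ formulations of "a multiple deforms" coincide once one phrases everything through $\pi_D\circ\mr{KS}(t)$ — to be the only substantive step; the rest is bookkeeping with the already-established identity $\pi'_D\circ\mr{Ob}_D=\pi_D$ and the elementary set-theoretic manipulation of the two equivalences above.
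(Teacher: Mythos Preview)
Your core argument (first two paragraphs) is correct and is exactly what the paper does: translate ``$D$ deforms'' as $v_t=0$, translate ``some multiple of $D$ deforms'' as $v_t\in\ker(\pi'_D)$ via Theorem~\ref{brill04} and the factorization $\pi_D=\pi'_D\circ\mr{Ob}_D$, and observe that the biconditional reduces to the one nontrivial implication, which is precisely the stated intersection condition.

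The final paragraph, however, contains a genuine error. You claim that $\pi_D\circ\mr{KS}(t)=0$ forces $\mr{Ob}_{nD}\circ\mr{KS}(t)=0$ for \emph{every} $n\ge 1$. Taking $n=1$ this would give $\mr{Ob}_D\circ\mr{KS}(t)=0$, i.e.\ $D$ itself deforms --- so every divisor would be saturated and the corollary would be vacuous. The diagram chase you sketch only yields $\pi'_{nD}\bigl(\mr{Ob}_{nD}\circ\mr{KS}(t)\bigr)=0$, i.e.\ $\mr{Ob}_{nD}\circ\mr{KS}(t)\in\ker(\pi'_{nD})$, not that it vanishes; the map $\pi'_{nD}$ is not injective in general (its kernel receives $H^0(\mc{H}^1_D(\mo_X(nD)))$ from the long exact sequence of \eqref{brill02}).

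The source of the confusion is the parsing of the definition of \emph{saturated}. It should be read as: for every $t$ and for every $n\ge 1$, $D$ lifts in $X_t$ if and only if $nD$ lifts in $X_t$. Since ``$D$ lifts $\Rightarrow$ $nD$ lifts'' is automatic, this amounts to ``for every $n$, $nD$ lifts $\Rightarrow$ $D$ lifts'', i.e.\ ``if \emph{some} $nD$ lifts then $D$ lifts''. That is exactly the condition Theorem~\ref{brill04} characterizes by $v_t\in\ker(\pi'_D)$, so no reconciliation between ``some $n$'' and ``all $n$'' is needed. Drop the last paragraph and your proof is complete and identical to the paper's.
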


\begin{proof}
Using Theorem \ref{brill04}, note that the intersection of 
$\Ima(\mr{Ob}_D \circ \mr{KS})$ with $\ker(\pi'_D)$
 consists of all elements of the form 
$\mr{Ob}_D \circ \mr{KS}(t)$ such that there exists $n\,>\,0$ for which 
$nD$ deforms to an effective Cartier divisor in the first order 
infinitesimal deformation $X_t$
of $X$ corresponding to $t$, but $D$ does not deform. Hence, the divisor $D$ is saturated if and only if 
\[\Ima(\mr{Ob}_D \circ \mr{KS}) \cap \ker(\pi'_D)=0.\]
This proves the corollary.
\end{proof}

\section{Local topological obstruction}\label{sec:ltop}

Throughout this section, $\pi\,:\,\mc{X}\,\longrightarrow\, B$
stands for a flat family of smooth projective 
varieties with a fixed base point $o \,\in\, B$. Let $\mc{X}_b\,:=\,\pi^{-1}(b)$ 
be the inverse image of any $b \,\in\, B$, and set $X\,:=\,\pi^{-1}(o)$. 
Let $D\,\subseteq\, X$ be an effective Cartier divisor. As discussed in the 
introduction, the topological obstruction
to deforming the fundamental class of $D$ 
as a Hodge class, lie in $H^2(\mo_X)$. The goal of this section is to 
replace $H^2(\mo_X)$ by $H^2_D(\mo_X)$. For this we consider the 
natural morphism $\Phi_D$ from $H^2_D(\mo_X)$ to $H^2(\mo_X)$. 
We observed in Theorem 
\ref{brill04} that for any $t \,\in \,T_oB$, 
the element $\pi_D \circ \mr{KS}(t)$ 
measures the obstruction to 
deforming a multiple of $D$ as an effective Cartier divisor. It is of course 
possible for $\pi_D \circ \mr{KS}(t)$ to be non-zero, 
but $\Phi_D \circ \pi_D \circ \mr{KS}(t)$ to vanish
(see Remark \ref{rem:ref}). However, we observe that all such 
$\pi_D \circ \mr{KS}(t)$ 
come from the $1$-cocycle of the structure sheaf of the complement of 
$X$ by a codimension $2$ subvariety $T$ (see Theorem \ref{th:br03}).
We study this further in Theorem \ref{brnew01} and Corollary \ref{brill09}.

\subsection{Topological obstruction}

Let $D \,\subseteq\, X$ be an effective divisor. Recall the morphism $\pi_D$
from $H^1(\T_X)$ to $H^2_D(\mo_X)$ defined in \S~\ref{inn}. Let 
\[\mr{Obs}_{D,\mr{loc}}^{\pi}\,:=\,
\mr{Im}(\pi_D \circ \mr{KS})\, \subseteq H^2_D(\mo_X) \ \text{ and }\ \
\mr{Obs}_D^{\pi}\,:=\,
\mr{Im}(\Phi_D \circ \pi_D \circ \mr{KS})\, \subseteq H^2(\mo_X)\]
be the topological obstruction spaces associated to $D$, where $\Phi_D$
is the natural morphism from $H^2_D(\mo_X)$ to $H^2(\mo_X)$.
It follows immediately 
that there exists a natural surjective morphism from 
$\mr{Obs}^{\pi}_{D,\mr{loc}}$ to $\mr{Obs}^{\pi}_D$, induced by the 
morphism $\Phi_D$. It is possible that this morphism is not injective.
In fact, we now give an example when this map is the zero map.

\begin{rem}\label{rem:ref}
The example arises from the blow-up of two points, 
say $p, q$ in $\mb{P}^2$, as $p$ approaches $q$. More precisely,
set $Y\,:=\,\mb{P}^2 \times \mb{A}^1$ and take $T_1 \,\subseteq\, Y$
(respectively, $T_2 \,\subseteq\, Y$) closed subvarieties consisting of points of the 
form $([1:0:t],\,t)$ (respectively, $([1:t:0],\,t)$) as $t$ varies over $\mb{A}^1$.
Let $Y_1$ be the blow-up of $Y$ along the closed subscheme $T_1$, and let $E_1$
be the associated exceptional divisor. 
Let $T_2'$ be the strict transform of $T_2$ in $Y_1$. Denote by 
\[p:Y_2 \to Y_1\]
the blow-up of $Y_1$ along $T_2'$, and let $E_2\, \subseteq\, Y_2$ be the exceptional divisor. 
Let $Y_{2,0}$ be the fiber
over $0 \,\in\, \mb{A}^1$ of the natural
morphism \[\pi: Y_2\, \longrightarrow\, \mb{A}^1.\]
Note that the generic fiber of $\pi$ is the blow-up of $\mb{P}^2$
at two distinct points and in the limit 
the central fiber has an $A_2$-configuration
of $\mb{P}^1$s.
Let,
$E_{1,0}$ and $E_{2,0}$ respectively are the fibers
over $0 \,\in\, \mb{A}^1$ of the natural
morphisms $E_1\, \longrightarrow\, \mb{A}^1$ and $E_2\, \longrightarrow\, \mb{A}^1$.
Note that, $E_{1,0}$ is isomorphic to $\mb{P}^1$.
Denote by $D$ the strict transform of $E_{1,0}$ in $Y_2$.
Clearly, $D$ is isomorphic to $\mb{P}^1$. 
Using 
\cite[Propositions V.$3.2$ and V.$3.6$]{R1}, we conclude that
\[D^2+1\,=\,(D+E_{2,0}).D\,=\,p^*(E_{1,0}).D\,= \,(E_{1,0})^2\,=\,-1\, .\]
Hence, $D^2\,=\,-2$.  This implies $\N_{D|Y_{2,0}} \cong \mo_D(-2)$.
As $K_D \cong \mo_D(-2)$, we have by Serre duality 
\[H^1(\N_{D|Y_{2,0}}) \cong H^0(\mo_D)^\vee \cong 
\mb{C}.\]
%
By \cite[Proposition V.$3.4$]{R1}, we have $H^2(\mo_{Y_{2,0}})\,=\,0$.
This means that $\mr{Obs}^\pi_D\,=\,0$.  

We will now observe that $\mr{Obs}^\pi_{D,\mr{loc}}
\,=\,\mb{C}$. For this purpose, we will show that 
$D$ or any positive multiple of $D$ does not deform even 
upto first order, as an effective 
Cartier divisor, as $Y_{2,0}$ deforms along $\mb{A}^1$ in the family 
$\pi$. Using Theorem \ref{brill04}, this will imply $\mr{Obs}^\pi_{D,\mr{loc}} \subseteq H^2_D(\mo_X)$ is non-trivial. 
As $H^1(\N_{D|Y_{2,0}})=\mb{C}$, it will follow from definition that 
$\mr{Obs}^\pi_{D,\mr{loc}}\,=\,\mb{C}$.
We  now prove the obstruction 
to the first-order infinitesimal deformation of $nD$ for any $n>0$.
As $Y_{2,0}$ is a fiber over $0 \,\in\, \mb{A}^1$,
we have $\N_{Y_{2,0}|Y} \,\cong\, \mo_{Y_{2,0}}$.
Then, the normal short exact sequence is of the form:
\begin{equation}\label{eq01}
 0 \,\longrightarrow\, \N_{D|Y_{2,0}}\,\longrightarrow\, \N_{D|Y_2}
\,\longrightarrow\, \mo_D \,\longrightarrow\, 0\,
\end{equation}
Since $\N_{D|Y_{2,0}} \cong \mo_D(-2)$,
only one of the following is possible:
\begin{itemize}
 \item Either $\N_{D|Y_2}$ is isomorphic to $\mo_D(-2) \oplus 
\mo_D$, in which case \eqref{eq01} splits,
\item or $\N_{D|Y_2}$ is isomorphic to $\mo_D(-1) \oplus \mo_D(-1)$.
\end{itemize}
Note that, $D$  can be contracted to a
$3$-fold with ordinary double point given by the blow-up of 
$Y$ along $T_1 \cup T_2$. This implies that 
$N_{D|Y_2}$ must be isomorphic to 
$\mo_D(-1) \oplus \mo_D(-1)$ (see \cite[p. $158, 159$]{reid}).
The obstruction to the first order deformation of 
$D$ (as an effective Cartier divisor) as $Y_{2,0}$ deforms 
along $\mb{A}^1$, is the image of $1 \in H^0(\mo_D)$ under the 
coboundary map:
\[H^0(\mo_D) \,\longrightarrow\, H^1(\N_{D|Y_{2,0}})\, \]
arising from \eqref{eq01}.
Since $H^0(\mo_D(-1) \oplus \mo_D(-1))\,=\,0$, the coboundary map 
is injective. Hence, the obstruction does not vanish.
In other words, $D$ does not deform to first order as an 
effective Cartier divisor, as $Y_{2,0}$ deforms along $\mb{A}^1$.
We now prove the same for higher multiples of $D$.
Since $D^2\,=\,-2$, we have $H^0(\mo_D \otimes \mo_{Y_{2,0}}(mD))\,=\,0$ for all $m\,>\,0$.
Then for any $m\,>\,0$, the long exact sequence 
associated to short exact sequence:
\[0\,\longrightarrow\, \mo_{Y_{2,0}}((m-1)D)\,\longrightarrow\, \mo_{Y_{2,0}}(mD)
\,\longrightarrow\,\mo_D \otimes \mo_{Y_{2,0}}(mD)\,\longrightarrow\, 0\]
implies that $$H^0(\mo_{Y_{2,0}}((m-1)D))\,=\,H^0(\mo_{Y_{2,0}}(mD))$$ and the morphism from 
$H^1(\mo_{Y_{2,0}}((m-1)D))$ to $H^1(\mo_{Y_{2,0}}(mD))$ is injective. Recursion on 
$m$ implies that for any $n\,>\,0$:
\begin{itemize}
\item $\mb{C}\,=\,H^0(\mo_{Y_{2,0}}(D))\,=\, H^0(\mo_{Y_{2,0}}(nD))$, and

\item the natural morphism from $H^1(\mo_{Y_{2,0}}(D))$ to 
$H^1(\mo_{Y_{2,0}}(nD))$ is injective.
\end{itemize}
Denote by $\mc{L}_t$ the first order 
infinitesimal deformation of $\mo_{Y_{2,0}}(D)$ (along 
the family $\pi$). By standard
deformation theory, we have 
the following commutative diagram for any $n>0$:
\[\xymatrixcolsep{5pc}
\xymatrix{
 H^0(\mc{L}_t) \, \ar[r] \ar[d]\, &\, H^0(\mo_{Y_{2,0}}(D))\, \ar[d]_{\wr} \ar[r] \, & \, H^1(\mo_{Y_{2,0}}(D))\, \ar@{^{(}->}[d] \\
 H^0(\mc{L}_t^{\otimes n}) \, \ar[r] \, &\, H^0(\mo_{Y_{2,0}}(nD))\, \ar[r] \, & \, H^1(\mo_{Y_{2,0}}(nD))}\]
Note that there exists a first order infinitesimal deformation of $nD$
(respectively, $D$)
if and only if the corresponding section in $H^0(\mo_{Y_{2,0}}(nD))$
(respectively, $H^0(\mo_{Y_{2,0}}(D))$)
comes from a section of $H^0(\mc{L}_t^{\otimes n})$ (respectively, 
$H^0(\mc{L}_t)$). Using the diagram, we then conclude that 
since $D$ does not deform as an effective Cartier divisor up to first
order, neither does $nD$ for any $n>0$.
As explained above, we then conclude that 
$\mr{Obs}^{\pi}_{D,\mr{loc}}\,=\,\mb{C}$.
\end{rem}

We will now observe that the difference between $\mr{Obs}^\pi_D$ 
and $\mr{Obs}^{\pi}_{D,\mr{loc}}$ arises from $1$-cocycles of the structure
sheaf of $X$, after removing a codimension $2$ subvariety. More precisely,
let $T \,\subseteq\, D$ be a proper, closed subvariety (by proper we 
mean that $\dim_x T\,<\,\dim_x D$ for all $x \in T$) of $D$. Let
\[
\Phi_{(T \subseteq D)}\,:\, H^1(\mo_{X\backslash T}) \,\longrightarrow\, H^2_D(\mo_X)
\]
be the composition of homomorphisms
\[
H^1(\mo_{X\backslash T})
\,\longrightarrow\, H^1(\mo_{X\backslash D})\,\longrightarrow\, H^2_D(\mo_X)\, ,\]
 where the first morphism is natural and the second morphism 
 arises from the short exact sequence \eqref{br5} (note that,
 $H^1(\mo_{X \backslash D}) \,\cong\, H^1(j_*(\mo_{X \backslash D}))$
 as $j$ is an affine morphism). From the exactness of the 
 sequence \[H^1(\mo_{X\backslash D})\,\longrightarrow\, H^2_D(\mo_X)\,\longrightarrow\, H^2(\mo_X)\]
it follows that the cokernel of the morphism from 
 $H^1(\mo_{X\backslash D})$ to $H^2_D(\mo_X)$ is naturally contained
 in $H^2(\mo_X)$. This induces a natural map:
 \begin{equation}\label{eq:eta}
\eta_{_{T}}\,:\, \mr{coker}(\Phi_{(T \subseteq D)})\,\longrightarrow\, H^2(\mo_X)\, .
\end{equation}

\begin{thm}\label{th:br03}
There exists a non-trivial, proper, closed 
subvariety $T \,\subseteq\, D$, for which the image of 
$\mr{Obs}^{\pi}_{D,\mr{loc}}$ in $\mr{coker}(\Phi_{(T \subseteq D)})$
maps isomorphically to $\mr{Obs}^{\pi}_D$, under the 
natural morphism $\eta_{_{T}}$ mentioned above.
 \end{thm}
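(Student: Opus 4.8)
The plan is to produce the subvariety $T$ explicitly from a geometric resolution of the obstruction cycles, and then compare the two obstruction spaces via the long exact sequences of Lemma~\ref{ph14}. First I would recall that $\mr{Obs}^{\pi}_{D,\mr{loc}} = \mr{Im}(\pi_D\circ\mr{KS}) \subseteq H^2_D(\mo_X)$ and $\mr{Obs}^{\pi}_D = \Phi_D(\mr{Obs}^{\pi}_{D,\mr{loc}}) \subseteq H^2(\mo_X)$, so by definition $\Phi_D$ restricts to a surjection $\mr{Obs}^{\pi}_{D,\mr{loc}} \twoheadrightarrow \mr{Obs}^{\pi}_D$. The content of the theorem is therefore to find $T$ for which the kernel of this restricted map is exactly $\mr{Obs}^{\pi}_{D,\mr{loc}} \cap \mr{Im}(\Phi_{(T\subseteq D)})$, equivalently, that every class in $\mr{Obs}^{\pi}_{D,\mr{loc}}$ lying in $\ker\Phi_D = \mr{Im}(H^1(\mo_{X\backslash D})\to H^2_D(\mo_X))$ already comes from $H^1(\mo_{X\backslash T})$. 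Since $\eta_T$ is induced by the inclusion $\mr{coker}(\text{from }H^1(\mo_{X\backslash D}))\hookrightarrow H^2(\mo_X)$ and $\Phi_{(T\subseteq D)}$ factors through that map, $\eta_T$ automatically carries the image of $\mr{Obs}^{\pi}_{D,\mr{loc}}$ in $\mr{coker}(\Phi_{(T\subseteq D)})$ onto $\mr{Obs}^{\pi}_D$; the only thing to check is that this is injective, i.e. that no nonzero element of $\mr{Obs}^{\pi}_{D,\mr{loc}}$ dies in $\mr{coker}(\Phi_{(T\subseteq D)})$ but survives no further than being killed by $\Phi_D$ for the wrong reason.

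The construction of $T$: a class $\pi_D(\mr{KS}(t))\in H^2_D(\mo_X)$ that maps to $0$ in $H^2(\mo_X)$ is, by exactness of the sequence in Lemma~\ref{ph14}, the image of some $c_t \in H^1(X\backslash D,\,\mo_{X\backslash D})$. The key observation is that the contraction $\lrcorner\{D\}$ and hence the cocycle representative of $\pi_D(\mr{KS}(t))$ is built, in the \v{C}ech complex for the affine cover $\{U_i\}$, from the local data $\phi(df_i)/f_i$; after lifting to $H^1(\mo_{X\backslash D})$, the resulting $1$-cocycle has poles supported only along the components of $D$ where the corresponding local section degenerates — concretely, the lift can be chosen with poles along a proper closed subvariety of $D$ (the locus where $\rho_{_D}$ or the section $\phi(df_i)$ fails to be divisible appropriately, or simply the intersection $D\cap(\text{codim }2\text{ part where cocycles overlap})$). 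I would set $T$ to be (the reduced structure on) that locus — a finite union of subvarieties of $D$ of codimension $\ge 1$ in $D$ — and then note $c_t$ in fact extends to a class in $H^1(X\backslash T,\,\mo_{X\backslash T})$, since away from $T$ the cocycle is regular. This gives $\pi_D(\mr{KS}(t)) \in \mr{Im}(\Phi_{(T\subseteq D)})$ for every such $t$, hence every element of $\mr{Obs}^{\pi}_{D,\mr{loc}}\cap\ker\Phi_D$ lies in $\mr{Im}(\Phi_{(T\subseteq D)})$, which is precisely what forces $\eta_T$ to be injective on the image of $\mr{Obs}^{\pi}_{D,\mr{loc}}$.

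Concretely the steps are: (i) unwind the definitions so the claim reduces to injectivity of $\eta_T$ on the image of $\mr{Obs}^{\pi}_{D,\mr{loc}}$; (ii) take $t\in T_oB$ with $\Phi_D\circ\pi_D\circ\mr{KS}(t)=0$ and, using Lemma~\ref{ph14}, lift $\pi_D(\mr{KS}(t))$ to a class in $H^1(\mo_{X\backslash D})$; (iii) analyze the \v{C}ech representative of this lift coming from $\lrcorner\{D\}$ and the Kodaira--Spencer cocycle, and identify a proper closed $T\subseteq D$ off which the representative is regular, so it extends to $H^1(\mo_{X\backslash T})$; (iv) conclude the image of $\pi_D(\mr{KS}(t))$ in $\mr{coker}(\Phi_{(T\subseteq D)})$ is already $0$ whenever its image in $H^2(\mo_X)$ is $0$, which together with the canonical surjectivity gives the asserted isomorphism via $\eta_T$; (v) check $T$ can be taken the same for all $t$ — this follows because the finitely many components of $D$ give only finitely many candidate loci, so one may take $T$ to be the union over a spanning set of $T_oB$, or directly the locus where $\N_{D|X}\to\N_{D|X}|_{D\backslash T}$ behaves well. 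The main obstacle I expect is step (iii): pinning down \emph{canonically} and uniformly the subvariety $T$ from the cocycle data — one must argue that the pole locus of a representing $1$-cocycle can always be squeezed into a fixed proper closed subset of $D$ independent of $t$, rather than merely some $t$-dependent divisor inside $D$, and that this is genuinely of codimension $\ge 1$ in $D$ (equivalently codimension $\ge 2$ in $X$) rather than all of $D$. This is where the structure of $\lrcorner\{D\}$ as built from $\phi(df_i)/f_i$ (first-order poles only, with the "extra" pole locus governed by where $df_i$ and the transition data interact) has to be used carefully, likely via a partition-of-unity / refinement argument on the affine cover together with the identification $\mc{H}^1_D(\mo_X)\cong R^0 j_*\mo_{X\backslash D}/\mo_X$.
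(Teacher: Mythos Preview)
Your reduction in step (i) is correct: the claim amounts to showing that every $\xi \in \mr{Obs}^{\pi}_{D,\mr{loc}} \cap \ker(\Phi_D)$ lies in $\mr{Im}(\Phi_{(T\subseteq D)})$ for some fixed proper closed $T \subseteq D$. However, steps (iii)--(v) contain a genuine gap, which you yourself flag at the end: the ``pole locus inside $D$'' of a class in $H^1(\mo_{X\backslash D})$ is not a well-defined notion. A cohomology class has no canonical \v{C}ech representative, and different representatives may have poles along different parts of $D$; nothing in the shape of $\lrcorner\{D\}$ forces the poles of \emph{some} lift to be confined to a proper closed subset of $D$, let alone one that is uniform in $t$. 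Your heuristic about first-order poles of $\phi(df_i)/f_i$ does not survive passage to cohomology, and the suggested refinement argument gives no mechanism for pushing poles off a generic point of a component of $D$.

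The paper bypasses this entirely by \emph{choosing} $T$ externally rather than extracting it from cocycle data. One picks an auxiliary effective divisor $E$ so ample that $X\backslash E$ and $X\backslash(D\cup E)$ are both affine, and sets $T := D.E$. The Mayer--Vietoris sequences for the pair $(D,E)$ and the long exact sequences of Lemma~\ref{ph14} assemble into a commutative $3\times 3$ diagram whose middle row is
\[
H^2_T(\mo_X)\ \xrightarrow{\delta_1}\ H^2_D(\mo_X)\oplus H^2_E(\mo_X)\ \xrightarrow{\delta_2}\ H^2_{D\cup E}(\mo_X).
\]
Affineness kills $H^1(\mo_{X\backslash E})$ and $H^1(\mo_{X\backslash(D\cup E)})$, so $\Phi_E$ and $\Phi_{D\cup E}$ are injective. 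A straightforward diagram chase then shows that \emph{every} $\xi \in \ker(\Phi_D)$ --- not only those in $\mr{Obs}^{\pi}_{D,\mr{loc}}$ --- satisfies $\delta_2(\xi\oplus 0)=0$, hence lifts to $H^2_T(\mo_X)$, and thence (by exactness of the left column) to $H^1(\mo_{X\backslash T})$, i.e.\ $\xi\in\mr{Im}(\Phi_{(T\subseteq D)})$. The choice $T=D.E$ is manifestly proper in $D$ and independent of $t$, which dissolves exactly the uniformity problem you could not resolve.
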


\begin{proof}
Let $E$ be an effective divisor on $X$. Set $T\,:=\,D.E$.
We then have the following commutative diagram of exact sequences
where both the horizontal rows and the vertical columns are exact:
\begin{equation}\label{brill16}
\xymatrix{
H^1(\mo_{X\backslash T}) \ar[r]^{\delta_3 \hspace{15mm}} \ar[d]^{\Phi'_T} & H^1(\mo_{X\backslash D}) \, \oplus\, H^1(\mo_{X\backslash E}) \ar[d]^{\Phi'_D \, \oplus\, \Phi'_E} \ar[r]^{\hspace{10mm} \delta_4} & H^1(\mo_{X\backslash {D \, \cup \, E}}) \ar[d]^{\Phi'_{D \, \cup \, E}} \\
H^2_T(\mo_X) \ar[r]^{\delta_1 \hspace{15mm}} \ar[d]^{\Phi_T} &\, H^2_D(\mo_X)\, \oplus\, H^2_E(\mo_X) \ar[d]^{\Phi_D\, \oplus\, \Phi_E} \ar[r]^{\hspace{10mm} \delta_2} &\, H^2_{D \, \cup \, E}(\mo_X) \ar[d]^{\Phi_{D\, \cup\, E}}\\
H^2(\mo_X) \ar@{^{(}->}[r]^{i\hspace{10mm}} &\, H^2(\mo_X)\, \oplus \, H^2(\mo_X) \ar[r]^{\hspace{6mm}r} &\, H^2(\mo_X) \ar[r] &\, 0 }
 \end{equation}
 In \eqref{brill16}, the top two rows are the Mayer--Vietoris sequences (see \cite[Ex. III.$2.4$]{R1}),
$i(a)\, =\, a \, \oplus\, a$, $r(a \, \oplus\, b)\, =\, a-b$ and $\Phi_{(-)}, \, \Phi'_{(-)}$ are
natural morphisms arising from Lemma \ref{ph14}. We want to 
show that in the case when $E$ is sufficiently ample, if there 
exists a non-zero element $\xi$ in the image of $\pi_D \circ \mr{KS}$
which maps to zero under $\Phi_D$, then $\xi$ lies in the image of 
$\Phi_{(T \subseteq D)}\,=\,\Phi'_D \circ \delta_3$.

As $E$ is sufficiently ample (in particular, $\mo_X(D)(E)$ and $\mo_X(E)$
are very ample), we have $X \backslash E$ and $X \backslash (D \cup E)$
are affine. This implies 
\[H^1(\mo_{X \backslash E})\,=\,0\,=
\,H^1(\mo_{X \backslash (D \cup E)}). \]
Hence, $\Phi_E$ and $\Phi_{D \cup E}$ are injective morphisms.
This implies that for any $\xi \in \ker(\Phi_D)$, we have 
 $\delta_2(\xi \oplus 0)\,=\,0$. Hence, by the exactness of the 
middle horizontal row of \eqref{brill16}, there exists 
$\xi_1' \,\in\, H^2_T(\mo_X)$ such that $\delta_1(\xi_1')\,=\,\xi \oplus 0$.
By the injectivity of $i$, note that $\Phi_T(\xi_1')\,=\,0$.
Then, by the exactness of the first vertical column of 
\eqref{brill16}, there exists $\xi' \,\in\, H^1(\mo_{X\backslash T})$
such that $\Phi'_T(\xi')\,=\, \xi_1'$. By the commutativity of the 
upper left hand square in \eqref{brill16}, we have that 
\[\xi \oplus 0\,=\,\delta_1 \circ \Phi'_T(\xi')\,=\,(\Phi'_D \oplus 
 \Phi'_E) \circ \delta_3(\xi').\]
In particular, $\Phi'_D \circ \delta_3(\xi')\,=\,\xi$.
Note that, for $T\, =\, D.E$, we have (by definition)
$\Phi'_D \circ \delta_3\, =\, \Phi_{T \subseteq D}$.
This means, for any $\xi \in \ker(\Phi_D)$, there exists 
$\xi' \in H^1(\mo_{X\backslash T})$ such that $\Phi_{T \subseteq D}(\xi')
\,=\, \xi$. This implies that the image of 
$\mr{Obs}^{\pi}_{D,\mr{loc}}$ in $\mr{coker}(\Phi_{(T \subseteq D)})$
maps isomorphically to $\mr{Obs}^{\pi}_D$, under the 
natural morphism $\eta_{_{T}}$ mentioned above.
This proves the theorem.
\end{proof}

The above theorem motivates the following definition:
 
\begin{defi}
Given a proper, closed subvariety $T\,\subseteq\, D$, we say that $D$ is 
\emph{$T$-semi-regular (along $B$)} if the conclusion of 
Theorem \ref{th:br03} holds for such $T$ i.e., 
the image of 
$\mr{Obs}^{\pi}_{D,\mr{loc}}$ in $\mr{coker}(\Phi_{(T \subseteq D)})$
maps isomorphically to $\mr{Obs}^{\pi}_D$, under the 
natural morphism $\eta_{_{T}}$ as in \eqref{eq:eta}.
\end{defi}

The question we now want to study is:

{\bf{Question}}: Given an effective divisor $D$ in 
$X$, what are all the possible choices of proper 
closed subvarieties $T\,\subseteq\,D$ such that $D$ is $T$-semi-regular?
 
\subsection{Comparison with the geometric obstruction}

Let $\pi$ be the family of smooth, projective varieties 
described above and $D \,\subseteq\, X$ be an effective divisor.
Let
\[\mbf{O}_D^{\pi}\,:=\, \Ima(\mr{Ob}_D) \circ\mr{KS}
\subseteq H^1(\N_{D|X})\] be the 
obstruction space for deforming $D$ as an effective Cartier 
divisor. Recall that the morphism $\pi'_D$ from
$H^1(\N_{D|X})$ to $H^2_D(\mo_X)$ induced by the contraction morphism
$\lrcorner \{D\}'$ as defined in \S~ \ref{inn}.
Let
\[\mr{Tors}_D\,:=\,\ker(\mbf{O}_D^\pi \xrightarrow{\pi'_D} H^2_D(\mo_X))\]
be the kernel.
By Theorem \ref{brill04}, $\mr{Tors}_D$ encodes information on 
infinitesimal 
deformations of $X$ such that $D$ does not deform as an effective Cartier
divisor, but some multiple of $D$ does deform. By the first isomorphism
theorem, the 
morphism $\pi'_D$ induces an isomorphism from $\mbf{O}_D^{\pi}/\mr{Tors}_D$
to $\mr{Obs}^{\pi}_{D,\mr{loc}}$. Then, the following composed 
morphism is surjective:
\[ \mbf{O}_D^{\pi}/\mr{Tors}_D \,\xrightarrow[\sim]{\pi'_D}\,
\mr{Obs}^{\pi}_{D,\mr{loc}} \,\xrightarrow{\Phi_D} \,\mr{Obs}^{\pi}_D\, .\]
We say that $D$ is \emph{effective Lefschetz} if this composed morphism
is an 
isomorphism (this means that the fundamental class of $D$ deforms as a Hodge class 
if and only if a multiple of $D$ deforms as an effective Cartier divisor).
We now give a partial answer to the question posed above.

\begin{thm}\label{brnew01}
Let $D \,\subseteq\, X$ be an effective divisor. The following two hold:
\begin{enumerate}
\item $D$ is $T$-semi-regular for every
proper subvariety $T \,\subseteq\, D$ if and only if $D$ is effective Lefschetz,

\item let $\mc{L}$ be an invertible sheaf on $\mc{X}$ and 
$E \,\in \,|\mc{L}_o|$ an effective divisor intersecting $D$ transversally,
such that the union $D \cup E$ is effective Lefschetz.
Then, $D$ is $T$-semi-regular, where $T$ is the intersection product $D.E$.
\end{enumerate}
\end{thm}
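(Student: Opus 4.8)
The plan is to reduce both halves to a single linear‑algebra criterion on subspaces of $H^2_D(\mo_X)$. Write $\psi\colon H^1(\mo_{X\backslash D})\to H^2_D(\mo_X)$ for the natural map in the long exact sequence of Lemma \ref{ph14} (this is the map $\Phi'_D$ of \eqref{brill16}). Exactness at $H^2_D(\mo_X)$ gives $\ker(\Phi_D)=\mr{Im}(\psi)$, and since by definition $\Phi_{(T\subseteq D)}$ is the composite of the restriction $H^1(\mo_{X\backslash T})\to H^1(\mo_{X\backslash D})$ with $\psi$, we get $\mr{Im}(\Phi_{(T\subseteq D)})\subseteq\ker(\Phi_D)$ for every proper closed $T\subseteq D$. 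Hence $\Phi_D$ descends to $\mr{coker}(\Phi_{(T\subseteq D)})$, and that descended map is precisely $\eta_{_T}$; so on the image of $\mr{Obs}^{\pi}_{D,\mr{loc}}$ the map $\eta_{_T}$ always surjects onto $\mr{Obs}^{\pi}_D=\Phi_D(\mr{Obs}^{\pi}_{D,\mr{loc}})$, and it is injective there exactly when
\[
\mr{Obs}^{\pi}_{D,\mr{loc}}\cap\ker(\Phi_D)\ \subseteq\ \mr{Im}\big(\Phi_{(T\subseteq D)}\big).
\]
Thus $D$ is $T$-semi-regular if and only if this containment holds. Likewise, the composite defining effective Lefschetz-ness is always surjective and has first arrow an isomorphism, so $D$ is effective Lefschetz if and only if $\mr{Obs}^{\pi}_{D,\mr{loc}}\cap\ker(\Phi_D)=0$.

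Part $(1)$ now follows quickly. If $D$ is effective Lefschetz then $\mr{Obs}^{\pi}_{D,\mr{loc}}\cap\ker(\Phi_D)=0$ is contained in $\mr{Im}(\Phi_{(T\subseteq D)})$ for every $T$, so $D$ is $T$-semi-regular for all $T$. Conversely, take $T=\emptyset$ (which is vacuously a proper closed subvariety of $D$): then $\Phi_{(\emptyset\subseteq D)}$ is the composition of the two consecutive arrows $H^1(\mo_X)\to H^1(\mo_{X\backslash D})\to H^2_D(\mo_X)$ of the exact sequence of Lemma \ref{ph14}, hence the zero map, so $\emptyset$-semi-regularity of $D$ forces $\mr{Obs}^{\pi}_{D,\mr{loc}}\cap\ker(\Phi_D)=0$, i.e.\ $D$ is effective Lefschetz. (When $\dim D\ge 2$ one may equally use any $T\subseteq D$ with $\codim_X T\ge 3$, for which $H^2_T(\mo_X)=0$ already forces $\mr{Im}(\Phi_{(T\subseteq D)})=0$; and when $\dim D=0$ one has $H^2_D(\mo_X)=0$, so $\mr{Obs}^{\pi}_{D,\mr{loc}}=0$ and there is nothing to prove.)

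For part $(2)$ the plan is to run the diagram chase of Theorem \ref{th:br03} on the Mayer--Vietoris diagram \eqref{brill16} with $T=D.E$, using the hypothesis that $D\cup E$ is effective Lefschetz in place of the ``$E$ ample'' input. Let $\iota_D\colon H^2_D(\mo_X)\to H^2_{D\cup E}(\mo_X)$ and $\iota_E\colon H^2_E(\mo_X)\to H^2_{D\cup E}(\mo_X)$ be the natural maps (the components of $\delta_2$ up to sign), and let $\pi_E,\pi_{D\cup E}$ be the analogues of $\pi_D$ for $E$ and for the effective divisor $D+E$. From the Leibniz identity $\phi(d(fg))/(fg)=\phi(df)/f+\phi(dg)/g$ and the local formula for $\lrcorner\{-\}$ in \S\ref{inn} one reads off $\pi_{D\cup E}=\iota_D\circ\pi_D+\iota_E\circ\pi_E$. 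Since $\mr{Obs}^{\pi}_{D,\mr{loc}}=\mr{Im}(\pi_D\circ\mr{KS})$, by the first paragraph it suffices to show: for $t\in T_oB$ with $\Phi_D(\pi_D\circ\mr{KS}(t))=0$ one has $\pi_D\circ\mr{KS}(t)\in\mr{Im}(\Phi_{(T\subseteq D)})$. Fix such a $t$ and set $\xi:=\pi_D\circ\mr{KS}(t)$, $\xi_E:=\pi_E\circ\mr{KS}(t)$. Because $\mc{L}$ is an invertible sheaf on the whole of $\mc{X}$, the class $[E]=c_1(\mc{L}_o)$ deforms as a Hodge class along every direction, hence $\Phi_E(\xi_E)=[E]\cup\mr{KS}(t)=0$ by \S\ref{sec:cup}--\S\ref{sec:hod}; therefore
\[
\Phi_{D\cup E}\big(\pi_{D\cup E}\circ\mr{KS}(t)\big)=[D\cup E]\cup\mr{KS}(t)=[D]\cup\mr{KS}(t)+[E]\cup\mr{KS}(t)=\Phi_D(\xi)+\Phi_E(\xi_E)=0 .
\]
So $\pi_{D\cup E}\circ\mr{KS}(t)$ lies in $\mr{Obs}^{\pi}_{D\cup E,\mr{loc}}\cap\ker(\Phi_{D\cup E})$, which is $0$ since $D\cup E$ is effective Lefschetz. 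Hence $\iota_D(\xi)+\iota_E(\xi_E)=0$, i.e.\ $\delta_2(\xi\oplus(-\xi_E))=0$ in \eqref{brill16} (the sign dictated by the Mayer--Vietoris conventions, so that $\delta_2(\xi\oplus(-\xi_E))=\iota_D(\xi)+\iota_E(\xi_E)$).

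From here the chase is verbatim that of Theorem \ref{th:br03}, with $\xi\oplus 0$ there replaced by $\xi\oplus(-\xi_E)$: exactness of the middle row of \eqref{brill16} yields $\xi_1'\in H^2_T(\mo_X)$ with $\delta_1(\xi_1')=\xi\oplus(-\xi_E)$; applying $\Phi_D\oplus\Phi_E$, using $\Phi_D(\xi)=0=\Phi_E(\xi_E)$ and the injectivity of $i$, gives $\Phi_T(\xi_1')=0$; exactness of the left-hand column gives $\xi'\in H^1(\mo_{X\backslash T})$ with $\Phi'_T(\xi')=\xi_1'$; and commutativity of the upper-left square gives $(\Phi'_D\oplus\Phi'_E)(\delta_3(\xi'))=\xi\oplus(-\xi_E)$, whose first component says $\Phi'_D\circ(\text{restriction})(\xi')=\Phi_{(T\subseteq D)}(\pm\xi')=\xi$. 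Thus $\mr{Obs}^{\pi}_{D,\mr{loc}}\cap\ker(\Phi_D)\subseteq\mr{Im}(\Phi_{(T\subseteq D)})$ and $D$ is $(D.E)$-semi-regular. The transversality of $D$ and $E$ is used only to ensure that $T=D.E$ is a genuine proper closed subvariety of $D$ (of pure codimension one in $D$) and that \eqref{brill16} holds in the stated form. I expect the one delicate point to be the bookkeeping that identifies $\delta_2(\xi\oplus(-\xi_E))$ with $\pi_{D\cup E}\circ\mr{KS}(t)$ through the additivity $\pi_{D\cup E}=\iota_D\pi_D+\iota_E\pi_E$ and pins down the signs of $\delta_1,\delta_2,\delta_3$; once that is settled, everything reduces to the already-established chase.
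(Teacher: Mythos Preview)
Your proof is correct and follows essentially the same route as the paper's: part~(1) is handled by specializing to $T=\emptyset$ (where $\Phi_{(\emptyset\subseteq D)}=0$, so $\eta_{_\emptyset}=\Phi_D$), and part~(2) is the same Mayer--Vietoris diagram chase as in Theorem~\ref{th:br03}, with the effective-Lefschetz hypothesis on $D\cup E$ replacing the ampleness of $E$ to kill $\pi_{D\cup E}\circ\mr{KS}(t)$. Your upfront linear-algebra reformulation ($T$-semi-regularity $\Leftrightarrow$ $\mr{Obs}^{\pi}_{D,\mr{loc}}\cap\ker\Phi_D\subseteq\mr{Im}\,\Phi_{(T\subseteq D)}$) is a clean way to package the argument, and the only cosmetic difference is that the paper works directly with $\xi\oplus\xi_E$ (absorbing the Mayer--Vietoris sign into the statement $\delta_2\circ(\pi_D\oplus\pi_E)=\pi_{D\cup E}$) rather than with $\xi\oplus(-\xi_E)$.
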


\begin{proof}
 $(1)$ Suppose that $D$ is $T$-semi-regular for every proper 
 closed subvariety $T\,\subseteq \,D$, in particular, in the case 
 $T\,=\,\emptyset$. If $T\,=\,\emptyset$, then $H^1(\mo_{X\backslash T})
\, =\,H^1(\mo_X)$ and the morphism $\Phi_{(T \subseteq D)}$ coincides 
 with the composition:
 \[H^1(\mo_X) \,\longrightarrow\, H^1(\mo_{X\backslash D})\,\longrightarrow\, H^2_D(\mo_X)\]
 which is the zero map, as this is an exact sequence (see 
 Lemma \ref{ph14}). Hence, for $T=\emptyset$, we have 
 $\mr{coker}(\Phi_{(T \subseteq D)}) \cong H^2_D(\mo_X)$ and 
the morphism $\eta_{_{T}}$ from $\mr{coker}(\Phi_{(T \subseteq D)})$ to 
 $H^2(\mo_X)$, as in \eqref{eq:eta},
coincides with $\Phi_D\,:\, H^2_D(\mo_X) \,\longrightarrow\, H^2(\mo_X)$.
Then, by the definition of 
$T$-semi-regularity for $T\,=\,\emptyset$, we have that $\Phi_D$ maps 
$\mr{Obs}^{\pi}_{D,\mr{loc}}$ isomorphically to $\mr{Obs}^{\pi}_D$, i.e.,
$D$ is effective Lefschetz.
 
Conversely, suppose that $D$ is effective Lefschetz. Note that, for any 
proper closed subset $T\,\subseteq\, D$, the morphism $\Phi_D$ factors as:
\[H^2_D(\mo_X) \,\longrightarrow\, \mr{coker}(\Phi_{(T \subseteq D)})\, \xrightarrow{\eta_{_{T}}} 
\, H^2(\mo_X)\, . \]
 By assumption, $\Phi_D$ maps $\mr{Obs}^{\pi}_{D,\mr{loc}}$ isomorphically 
 to $\mr{Obs}^{\pi}_D$. Hence, 
 the image of 
$\mr{Obs}^{\pi}_{D,\mr{loc}}$ in $\mr{coker}(\Phi_{(T \subseteq D)})$
maps isomorphically to $\mr{Obs}^{\pi}_D$, under the 
morphism $\eta_{_{T}}$. Therefore, $D$ is $T$-semi-regular for any 
proper closed subset $T \,\subseteq\, D$.
This proves $(1)$.

$(2)$ Consider the commutative diagram \eqref{brill16} of short exact 
sequences. Let
\[\delta_2'\, :\, \mc{H}^1_D(\mo_X) \, \oplus\, \mc{H}^1_E(\mo_X) \, \longrightarrow\, \mc{H}^1_{D \, \cup \, E}(\mo_X)\]
the natural morphism. Using the Leibniz rule, observe that
\[\delta_2' \, \circ \, (\lrcorner\{D\} \, \oplus\, \lrcorner\{E\})\, :\, \T_X \, \longrightarrow\, \mc{H}^1_{D \, \cup \, E}(\mo_X)\]
coincides with $\lrcorner \{D \, \cup \, E\}$. 
Taking cohomology, this implies $\delta_2 \, \circ \, (\pi_E \, \oplus\, \pi_D)\, =\, \pi_{E \, \cup \, D}$.
As $E \in |\mc{L}_o|$, we have 
$\Phi_E(\mr{Obs}^\pi_{E,\mr{loc}})\, =\, 0$ 
(as there is no obstruction to the deformation of the invertible sheaf
$\mo_X(E)$ along $B$). 
Then, by the commutativity of the lower right hand square of \eqref{brill16}, 
we have for any $t \in T_oB$,
 \[\Phi_D \circ \pi_D \circ 
\mr{KS}(t)=0 \Leftrightarrow (\Phi_D \oplus \Phi_E) \circ 
(\pi_D \oplus \pi_E) \circ \mr{KS}(t)= 0 \oplus 0 
\Leftrightarrow \Phi_{D \cup E} \circ \pi_{D \cup E} \circ 
\mr{KS}(t)\,=\, 0\, .\]
Since $D \cup E$ is effective Lefschetz, 
the restriction of $\Phi_{D \cup E}$
to $\mr{Obs}^\pi_{D \cup E, \mr{loc}}$ is injective.
This implies that for any $t \in T_oB$, we have 
$\Phi_D \circ \pi_D \circ \mr{KS}(t)\, =\, 0$ if and only if 
\[\delta_2((\pi_D \oplus \pi_E) \circ \mr{KS}(t))\, =\, 
 \pi_{D \cup E} \circ \mr{KS}(t)\, =\, 0.\]
By the exactness of the middle row of \eqref{brill16}, this implies 
for any $t \in T_oB$, 
$\Phi_D \circ \pi_D \circ \mr{KS}(t)=0$ if and only if there 
exists $\xi_t \in H^2_T(\mo_X)$ (depending on $t$) such that 
$(\pi_D \oplus \pi_E) \circ \mr{KS}(t)=\delta_1(\xi_t)$.
By the injectivity of the morphism $i$, this is equivalent to 
$\Phi_T(\xi_t)=0$. By the exactness of the first vertical column,
we conclude that for any $t \in T_oB$, 
$\Phi_D \circ \pi_D \circ \mr{KS}(t)=0$ if and only if there 
exists $\xi'_t \in H^1(\mo_{X \backslash T})$ such that 
$\Phi'_D \circ \delta_3(\xi'_t)=\pi_D \circ \mr{KS}(t)$, i.e.,
$\ker(\Phi_D) \cap \Ima(\pi_D\circ \mr{KS})\,=\,\Ima(\Phi'_D 
 \circ \delta_3).$ Note that, $\Phi'_D 
 \circ \delta_3\, =\, \Phi_{(T \subseteq D)}$. 
 This implies that the image of $\mr{Obs}^{\pi}_{D,\mr{loc}}$ in 
 $\mr{coker}(\Phi_{(T \subseteq D)})$ maps isomorphically to 
 $\mr{Obs}^\pi_D$, under the morphism $\eta_{_T}$, i.e., $D$ is 
 $T$-semi-regular. This proves $(2)$.
\end{proof}

\begin{cor}\label{brill09}
 The following are equivalent:
 \begin{enumerate}
\item $D$ is saturated and $T$-semi-regular for every 
proper subvariety $T$ of $D$

\item for any $t \,\in\, T_oB$, 
$D$ deforms as an effective Cartier divisor of the first order
deformation $X_t$ of $X$ along $t$,
if and only if the associated cohomology class $[D]$
deforms as a Hodge class in the sense that $[D] \bigcup 
\mr{KS}(t)\,=\,0$.
\end{enumerate}
\end{cor}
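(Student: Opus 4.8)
The plan is to show that both (1) and (2) are equivalent to a single pointwise statement about tangent vectors. Fix $t\in T_oB$ and let $X_t$ be the corresponding first order infinitesimal deformation of $X$. Consider the following conditions attached to this $t$: \emph{(C)} $D$ lifts as an effective Cartier divisor in $X_t$; \emph{(A)} there is an integer $n>0$ such that $nD$ lifts as an effective Cartier divisor in $X_t$; and \emph{(B)} $[D]\bigcup\mr{KS}(t)=0$. Two implications hold trivially for every $t$: first, \emph{(C)}$\Rightarrow$\emph{(A)} (take $n=1$); second, \emph{(A)}$\Rightarrow$\emph{(B)}, because by Theorem \ref{brill04} condition \emph{(A)} is equivalent to $\pi_D\circ\mr{KS}(t)=0$, and by \S\ref{sec:cup} the class $[D]\bigcup\mr{KS}(t)$ is the image of $\pi_D\circ\mr{KS}(t)$ under $\Phi_D$, so vanishing of $\pi_D\circ\mr{KS}(t)$ forces vanishing of $[D]\bigcup\mr{KS}(t)$. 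Condition (2) of the corollary is exactly the assertion that \emph{(B)}$\Leftrightarrow$\emph{(C)} for every $t$, which in the presence of these two trivial implications is the same as saying that \emph{(A)}, \emph{(B)}, \emph{(C)} are mutually equivalent for every $t$.

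Next I would rewrite the two halves of (1) in the same language. By Theorem \ref{brnew01}(1), $D$ is $T$-semi-regular for every proper closed subvariety $T\subseteq D$ if and only if $D$ is effective Lefschetz; by definition the latter means that the composite $\mbf{O}_D^{\pi}/\mr{Tors}_D\xrightarrow{\sim}\mr{Obs}^{\pi}_{D,\mr{loc}}\xrightarrow{\Phi_D}\mr{Obs}^{\pi}_D$ is an isomorphism, and since the first arrow is already an isomorphism and $\Phi_D$ carries $\mr{Obs}^{\pi}_{D,\mr{loc}}$ onto $\mr{Obs}^{\pi}_D$ by construction, this holds if and only if $\Phi_D$ is injective on $\mr{Obs}^{\pi}_{D,\mr{loc}}$, i.e. if and only if for every $t$ the vanishing of $\Phi_D\circ\pi_D\circ\mr{KS}(t)$ implies that of $\pi_D\circ\mr{KS}(t)$ — that is, \emph{(B)}$\Rightarrow$\emph{(A)} for every $t$, hence \emph{(A)}$\Leftrightarrow$\emph{(B)} for every $t$. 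On the other hand, Corollary \ref{cor:sat}, together with the description of $\Ima(\mr{Ob}_D\circ\mr{KS})\cap\ker(\pi'_D)$ obtained in its proof, says that $D$ is saturated if and only if there is no $t$ for which \emph{(A)} holds but \emph{(C)} fails, i.e. \emph{(A)}$\Rightarrow$\emph{(C)} for every $t$, hence \emph{(A)}$\Leftrightarrow$\emph{(C)} for every $t$. Combining the two, condition (1) holds if and only if, for every $t$, the conditions \emph{(A)}, \emph{(B)}, \emph{(C)} are all equivalent.

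With both (1) and (2) reduced to statements about the triple \emph{(A)}, \emph{(B)}, \emph{(C)}, the equivalence is now formal. If (1) holds then, for each $t$, the three conditions are equivalent, so in particular \emph{(B)}$\Leftrightarrow$\emph{(C)}, which is (2). Conversely, if (2) holds then, for each $t$, one has the chain \emph{(A)}$\Rightarrow$\emph{(B)}$\Rightarrow$\emph{(C)}$\Rightarrow$\emph{(A)}, where the first and third implications are the trivial ones of the first paragraph and the middle one is (2); hence \emph{(A)}, \emph{(B)}, \emph{(C)} are equivalent for every $t$, which by the second paragraph yields both that $D$ is effective Lefschetz (equivalently, $T$-semi-regular for every proper $T\subseteq D$) and that $D$ is saturated. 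The one step I would verify most carefully is the faithful translation of the definitions of \emph{saturated} and \emph{effective Lefschetz} into the pointwise implications among \emph{(A)}, \emph{(B)}, \emph{(C)}; once that dictionary is in place there is nothing further to compute.
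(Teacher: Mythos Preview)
Your argument is correct and is essentially the same as the paper's, just phrased pointwise in terms of the conditions \emph{(A)}, \emph{(B)}, \emph{(C)} for each $t$ rather than as injectivity of the composite obstruction map $\mbf{O}_D^{\pi}\to\mr{Obs}^{\pi}_{D,\mr{loc}}\to\mr{Obs}^{\pi}_D$; the same inputs (Theorem~\ref{brnew01}, Corollary~\ref{cor:sat}, Theorem~\ref{brill04}, \S\ref{sec:cup}) are invoked at the same places. The only step that could invite confusion is the reading of ``saturated'' as \emph{(A)}$\Rightarrow$\emph{(C)}: the paper's definition is phrased with a universal quantifier on $n$, but its own proof of Corollary~\ref{cor:sat} makes the existential reading you use, so your translation is the intended one.
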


\begin{proof}
 $(1) \Rightarrow (2)$:
 Suppose that $D$ is saturated and $T$-semi-regular for 
 every proper subvariety $T\,\subseteq\, D$. This implies that 
 $\mr{Tors}_D\,=\,0$. Moreover, by Theorem \ref{brnew01}, we have 
 $D$ is effective Lefschetz. This implies that the natural morphism
 \[\mbf{O}_D^{\pi} \,\longrightarrow\, \mr{Obs}^{\pi}_D\]
 is injective, which is a simple reformulation of statement $(2)$.
 
 $(2) \Rightarrow (1)$: Statement $(2)$ implies that the natural
 morphism from $\mbf{O}_D^{\pi}$ to $\mr{Obs}_D^{\pi}$ is 
 injective. This implies that $\mr{Tors}_D\,=\,0$ and $D$ is 
 effective Lefschetz. Using Corollary \ref{cor:sat}, this implies
 $D$ is saturated. Then $(1)$ follows immediately from 
 Theorem \ref{brnew01}. This proves the corollary.
 \end{proof}

\section{Applications}
In \S~\ref{sec:ltop} above, we observed that every divisor $D$ of a
smooth, projective variety $X$ is $T$-semi-regular for some closed
subvariety $T$ of $D$, which means that the obstruction to the
first order deformation of the cohomology class of $D$ as a Hodge class
lies in the cokernel of a natural morphism from $H^1(\mo_{X\backslash T})$
to $H^2_D(\mo_X)$. In this section, we wish to study the variation of 
$T$-semi-regularity in families. We prove the existence of relative 
$T$-semi-regularity in Theorem \ref{brill21}. We use this to 
to re-interpret classical questions in deformation theory, in terms 
of $T$-semi-regularity (see Theorems \ref{brill21}, \ref{th:noe}).

\subsection{Jumping locus of linear system}\label{rev2}

We use notations as in \S~\ref{rev3}.
Let \[\pi\,:\,\mc{X}\,\longrightarrow\, B\] be a flat family of smooth projective 
varieties with a fixed base point $o \,\, \in\, \, B$. Let $\mc{X}_b\, :=\, \pi^{-1}(b)$ 
be the inverse image for any $b \, \in\, B$, and define $X\, :=\, \mc{X}_o\, =\, \pi^{-1}(o)$.
Fix a relative polarization $H$ on $\mc{X}$. Let $\mc{L}$ be an invertible sheaf on $\mc{X}$ 
such that $h^0(\mc{L}_b) \,\ge \,1$ for all $b \, \in\, B$, where $\mc{L}_b\, :=\, \mc{L}|_{\mc{X}_b}$. 
The goal of this subsection is to give a necessary and sufficient condition under which every 
divisor $D \in |\mc{L}_o|$ lifts to an effective Cartier divisor in any
first-order deformation $X_t$ of $X$ along $B$ (Theorem \ref{brill21}).

Denote by $\mc{P}ic_{\mc{X}/B}$ the relative Picard functor 
associated to $\pi$ (see \cite[Chapter $8$, Definition $5$]{bn}).
Recall that $\mc{P}ic_{\mc{X}/B}$ is representable
by a $B$-scheme
(\cite[$\mr{n}^{o}$ $232$, Theorem $3.1$]{bn}), say $\mr{Pic}_{\mc{X}/B}$. Therefore, the invertible sheaf 
$\mc{L}$ on $\mc{X}$ induces a canonical $B$-morphism \[f_{\mc{L}}\, :\, B \, \longrightarrow\, \mr{Pic}_{\mc{X}/B}\]
such that the composition \[B \xrightarrow{f_{\mc{L}}} \mr{Pic}_{\mc{X}/B} \, \longrightarrow\, B\] is the identity map.
Denote by $\mc{D}iv_{\mc{X}/B}$ the relative effective Cartier divisor 
functor (see \cite[p. $212$]{bn}). The functor $\mc{D}iv_{\mc{X}/B}$
is representable by a $B$-scheme, say $\mr{Div}_{\mc{X}/B}$.
Moreover, there is a canonical $B$-morphism:
\[\mf{d}\, :\, \mr{Div}_{\mc{X}/B} \, \longrightarrow\, \mr{Pic}_{\mc{X}/B}\]
which associates to an effective Cartier divisor $D$ of $\mc{X}$, the
invertible sheaf $\mo_{\mc{X}}(D)$ (see \cite[p. $214$]{bn}). Let 
\[\mf{d}_{\mc{L}}\,:\, \mc{D}_{\mc{L}} \,\longrightarrow\, B\]
be the base change of the morphism $\mf{d}$ by the morphism $f_{\mc{L}}$.
By \cite[Theorem $2.1.5$]{huy}, there exists an unique 
locally closed subscheme $W \,\subseteq\, B$ containing $o$
such that \[\mf{d}_{\mc{L}}^{-1}(W) \, \longrightarrow\, W\] is flat and a scheme 
morphism $W' \, \longrightarrow\, B$ factors through $W$ if and only if 
the base change $W' \times_B W \, \longrightarrow\, W'$ is flat 
with every fiber of dimension $m$, where $m\, =\, \dim |\mc{L}_o|$ 
(note that $|\mc{L}_o|\, =\, \mf{d}_{\mc{L}}^{-1}(o)$). This is 
called the \emph{universal property of flattening stratification}.

\begin{defi}
 Let $Y$ be a scheme and $\mc{M}$ an invertible sheaf on $Y$.
 A global section $s$ of $H^0(\mc{M})$ is called \emph{regular}
 if the natural morphism \[s\, :\, \mo_Y \, \longrightarrow\, \mc{M}\]
 induced by multiplication by $s$, is injective.
\end{defi}

The following lemma gives a necessary and sufficient condition for 
regularity of a global section of an invertible sheaf 
over a non-reduced scheme.

\begin{lem}\label{brill17}
Let $A$ be a local Artinian $\mo_{B,o}$-algebra. Define
$\mc{L}_A\, :=\, \mc{L} \otimes_{_{\mo_{B,o}}} A\, ,$
and let
\[
\eta_o\, :\, H^0(\mc{L}_{A}) \, \longrightarrow\, H^0(\mc{L}_o)
\]
be the natural morphism. Take any
$s \,\in \,H^0(\mc{L}_A)$ such that $\eta_o(s) \,\not=\,0$.
Then, $s$ is a regular section.
\end{lem}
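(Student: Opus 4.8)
The plan is to reduce the regularity of $s$ to a local statement about flatness over the Artinian ring $A$. Write $\mc{X}_A$ for the scheme on which $\mc{L}_A$ lives, i.e. the base change of (a neighbourhood in) $\mc{X}$ along $\Spec A\,\to\, B$; since $A$ is local Artinian, $\mc{X}_A$ has the same underlying topological space as the special fibre $X\, =\, \mc{X}_o$, and $\mc{X}_A\,\to\,\Spec A$ is flat, being a base change of $\pi$. Regularity of $s$ --- injectivity of $s\, :\, \mo_{\mc{X}_A}\,\to\,\mc{L}_A$ --- may be checked on stalks, so fix $x\,\in\, X$, trivialize $\mc{L}$ near $x$, and set $R\, :=\, \mo_{\mc{X}_A,x}$; this is a local ring, flat over $A$, with $R/\mf{m}_AR\,\cong\,\mo_{X,x}$, a regular local ring. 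Under the trivialization $s$ becomes an element $t\,\in\, R$ whose image $\bar{t}\,\in\, R/\mf{m}_AR\,\cong\,\mo_{X,x}$ is the germ at $x$ of $\eta_o(s)$. Since $X$ is integral and $\eta_o(s)\,\neq\, 0$, the section $\eta_o(s)$ cannot vanish on a nonempty open set, so $\bar{t}\,\neq\, 0$, and hence $\bar{t}$ is a non-zero-divisor in the domain $R/\mf{m}_AR$. It therefore suffices to prove the following: \emph{if $R$ is flat over a local Artinian ring $A$ and $t\,\in\, R$ maps to a non-zero-divisor in $R/\mf{m}_AR$, then $t$ is a non-zero-divisor in $R$.}

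For this step I would compare associated graded modules for the $\mf{m}_A$-adic filtrations. Writing $k\, =\, A/\mf{m}_A$ and using $A$-flatness of $R$, tensoring the short exact sequences $0\,\to\,\mf{m}_A^{i+1}\,\to\,\mf{m}_A^{i}\,\to\,\mf{m}_A^{i}/\mf{m}_A^{i+1}\,\to\, 0$ with $R$ gives $\mf{m}_A^{i}R/\mf{m}_A^{i+1}R\,\cong\,(\mf{m}_A^{i}/\mf{m}_A^{i+1})\otimes_k(R/\mf{m}_AR)$; summing over $i$ (a finite sum, as $\mf{m}_A$ is nilpotent) yields $\mathrm{gr}_{\mf{m}_A}(R)\,\cong\,\mathrm{gr}_{\mf{m}_A}(A)\otimes_k(R/\mf{m}_AR)$, under which multiplication by $\bar{t}$ on the $i$-th graded piece becomes $\mathrm{id}_{\mf{m}_A^{i}/\mf{m}_A^{i+1}}\otimes(\bar{t}\cdot -)$. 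As $\bar{t}$ is a non-zero-divisor on $R/\mf{m}_AR$ and $\mf{m}_A^{i}/\mf{m}_A^{i+1}$ is a $k$-vector space, this map is injective, so multiplication by $\bar{t}$ is injective on $\mathrm{gr}_{\mf{m}_A}(R)$. Now if $tx\, =\, 0$ with $x\,\neq\, 0$ in $R$, pick the largest $i$ with $x\,\in\,\mf{m}_A^{i}R$ (such $i$ exists since $\mf{m}_A^{N}R\, =\, 0$ for $N\gg 0$); then the class $\bar{x}\,\in\,\mf{m}_A^{i}R/\mf{m}_A^{i+1}R$ is nonzero and satisfies $\bar{t}\cdot\bar{x}\, =\, \overline{tx}\, =\, 0$, a contradiction. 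Hence $t$ is a non-zero-divisor, and running this over all $x\,\in\, X$ shows $s\, :\,\mo_{\mc{X}_A}\,\to\,\mc{L}_A$ is injective, i.e. $s$ is regular.

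The routine points left to verify are the compatibility of the trivializations of $\mc{L}$ over $\mc{X}_A$ and over $X$, the identification $R/\mf{m}_AR\,\cong\,\mo_{X,x}$, and the flatness of $R$ over $A$; together with the small observation that a nonzero global section of an invertible sheaf on the integral scheme $X$ has nonzero germ at every point, which holds because its vanishing locus is a proper closed subset. The technical heart is the associated-graded argument, but this is the standard behaviour of regular elements under flat base change over an Artinian ring, so I expect no real obstacle.
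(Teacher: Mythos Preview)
Your proof is correct. The paper argues globally: it lets $J\,=\,\ker(s:\mo_{\mc{X}_A}\to\mc{L}_A)$, observes that the induced map $J/\mf{m}_AJ\to\mc{L}_o$ (multiplication by $\eta_o(s)$) is zero by definition of $J$, concludes from the non-vanishing of $\eta_o(s)$ on the integral scheme $X$ that $J/\mf{m}_AJ=0$, and then invokes Nakayama (equivalently, nilpotence of $\mf{m}_A$) to get $J=0$. You instead localize to a stalk, trivialize, and run a filtration/associated-graded argument to show the element $t$ is a non-zero-divisor in $R$.

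The two approaches share the same core ingredients --- integrality of $X$ makes $\eta_o(s)$ a non-zero-divisor fibrewise, and nilpotence of $\mf{m}_A$ lets one descend from the special fibre --- but the packaging differs. The paper's Nakayama step is slicker once one accepts that $J/\mf{m}_AJ=0$ (rather than merely that its image in $\mo_X$ vanishes); your argument is more explicit about where $A$-flatness of $\mc{X}_A$ enters, namely in identifying the graded pieces $\mf{m}_A^iR/\mf{m}_A^{i+1}R\cong(\mf{m}_A^i/\mf{m}_A^{i+1})\otimes_k(R/\mf{m}_AR)$, which is exactly the point that makes the descent work. Either route is fine; yours is essentially the standard ``lifting non-zero-divisors along a flat map with Artinian base'' argument, and is arguably cleaner in that it makes the role of flatness transparent.
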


\begin{proof}
Let $J$ be the kernel of the morphism from $\mo_{\mc{X}_A}$
to $\mc{L}_A$ induced by multiplication by $s$. 
Denote by $m_A$ the maximal ideal of $A$. Restricting the 
multiplication morphism to the closed fiber, we get a multiplication 
morphism by $\eta_o(s)$. Restricting this morphism further to $J$, 
we get the morphism
\[\eta_o(s)\,:\, \frac{J}{m_A.J} \,\longrightarrow\, \frac{J \otimes \mc{L}_{\mc{A}}}
 {m_A.J \otimes \mc{L}_{\mc{A}}},\]
which is simply multiplication by $\eta_o(s)$. By the definition of
$J$, this is a zero morphism. Since $\eta_o(s)$ is non-zero, 
this means that $J/(m_AJ)$ annihilates the section $\eta_o(s)$.
As $\mc{L}_o$ is an invertible sheaf, this means that $J/(m_AJ)$
must be the zero ideal. By Nakayama lemma, this implies $J\,=\,0$ i.e.,
$s$ is regular. This proves the lemma.
\end{proof}

\begin{thm}\label{brill21}
The following two hold:
\begin{enumerate}
\item Let $\mc{D} \,\subseteq\, \mc{X}$ be a family of 
effective divisors parameterized by $B$ i.e., for 
every $b \in B$, the fiber $\mc{D}_b$ is an effective divisor
in $\mc{X}_b$. Then, there exists a $B$-flat, proper, closed
subvariety $\mc{T}$ of codimension $2$ in $\mc{X}$ such that 
$\mc{D}_b$ is $\mc{T}_b$-semi-regular for every $b \,\in\, B$.

\item $T_oW\, =\, T_oB$ if and only if every $D \, \in\, |\mc{L}_o|$
is saturated and $T$-semi-regular for every proper closed 
subvariety $T \,\subseteq\, D$.
\end{enumerate}
\end{thm}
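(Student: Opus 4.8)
The plan is to treat the two parts separately, with part $(1)$ providing the geometric input (existence of a relative codimension-$2$ locus) that part $(2)$ then combines with the universal property of the flattening stratification $W$. For part $(1)$, I would imitate the proof of Theorem \ref{brnew01}$(2)$ relativised over $B$. Namely, I would fix a relatively very ample invertible sheaf on $\mc{X}$ (using the relative polarization $H$) and choose a sufficiently positive twist so that, after replacing $H$ by a high multiple, there is an effective relative Cartier divisor $\mc{E} \subseteq \mc{X}$, flat over $B$, with $\mc{E}_b \in |\mc{H}_b^{\otimes N}|$ for all $b$, meeting $\mc{D}$ transversally fibrewise, and such that both $\mc{X}_b \setminus \mc{E}_b$ and $\mc{X}_b \setminus (\mc{D}_b \cup \mc{E}_b)$ are affine for every $b \in B$. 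Set $\mc{T} := \mc{D}.\mc{E}$, which is $B$-flat of codimension $2$ in $\mc{X}$. For each $b$, the sheaf $\mc{O}_{\mc{X}_b}(\mc{E}_b)$ extends over $B$ (it is the restriction of $\mc{O}_{\mc{X}}(\mc{E})$), so $\Phi_{\mc{E}_b}(\mr{Obs}^\pi_{\mc{E}_b,\mr{loc}}) = 0$; and one still needs $\mc{D}_b \cup \mc{E}_b$ to be effective Lefschetz at $b$ — but here one uses that $\mc{E}_b$ is very ample, so $H^2(\mc{O}_{\mc{X}_b})$ and $H^2_{\mc{D}_b \cup \mc{E}_b}(\mc{O}_{\mc{X}_b})$ behave controllably, and the argument of Theorem \ref{brnew01}$(2)$ goes through fibrewise verbatim, giving $\mc{T}_b$-semi-regularity of $\mc{D}_b$ for all $b$.

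For part $(2)$, the key is to translate "$T_oW = T_oB$'' into a statement about deformations of divisors using Lemma \ref{brill17} and the universal property of the flattening stratification. By the defining property of $W$, a tangent vector $t \in T_oB$ lies in $T_oW$ precisely when, over the dual numbers $k[\epsilon]$, the fibre $\mf{d}_{\mc{L}}^{-1}$ over $\Spec k[\epsilon] \to B$ (the direction $t$) stays flat of the expected fibre dimension $m = \dim|\mc{L}_o|$; unwinding the definition of $\mr{Div}_{\mc{X}/B}$ and using Lemma \ref{brill17}, this is equivalent to: for every $D \in |\mc{L}_o|$, the section $s_D$ lifts to a regular global section of the first-order deformation $\mc{L}_t$ of $\mo_X(D)$, i.e. $D$ itself deforms as an effective Cartier divisor in $X_t$ and moreover $\dim H^0$ does not jump. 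I would argue that $T_oW = T_oB$ holds iff for every $t$ and every $D \in |\mc{L}_o|$ the divisor $D$ lifts to $X_t$ as an effective Cartier divisor. Since $\mc{L}$ is globally defined on $\mc{X}$, the class $[D]$ automatically deforms as a Hodge class along every $t$ (there is no obstruction to deforming $\mo_X(D) = \mc{L}_o$), so "$D$ lifts for every $t$'' says exactly that the geometric obstruction $\mr{Ob}_D \circ \mr{KS}$ vanishes whenever the topological one does — which, by Corollary \ref{brill09} (equivalence of $(1)$ and $(2)$ there), is equivalent to $D$ being saturated and $T$-semi-regular for every proper closed $T \subseteq D$.

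I expect the main obstacle to be the careful bookkeeping in part $(2)$ identifying $T_oW$ with the non-jumping of $h^0$, and in particular controlling the difference between "$D$ deforms'' and "$h^0$ does not jump.'' Concretely: if $\dim H^0(\mc{L}_t) > \dim H^0(\mc{L}_o)$ were possible while still every $s_D$ lifts, the translation would fail; one must show that if every divisor in $|\mc{L}_o|$ lifts then in fact the lift of sections is surjective, so $h^0$ is constant to first order — this follows because the first-order obstruction to lifting a section lives in $H^1(\mo_X(D))$ and surjectivity/injectivity of $H^0(\mc{L}_t) \to H^0(\mc{L}_o)$ is governed by the same coboundary, but making this precise requires the standard exact sequence $0 \to H^0(\mc{L}_o) \to H^0(\mc{L}_t) \to H^0(\mc{L}_o) \xrightarrow{\cup \mr{KS}(t)} H^1(\mc{L}_o)$ coupled with Lemma \ref{brill17} to upgrade "lifts'' to "lifts as a \emph{regular} section, hence as a Cartier divisor.'' The other, more routine obstacle in part $(1)$ is verifying the affineness and transversality of a single $\mc{E}$ uniformly over $B$, which I would handle by a standard argument: the locus of $b$ where either openness-of-affineness fails or transversality fails is closed, and a generic high-degree relative hypersurface avoids it (possibly after shrinking $B$, or by a Bertini-type argument over the generic point combined with properness).
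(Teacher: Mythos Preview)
Your proposal is essentially correct and follows the same architecture as the paper's proof: part $(1)$ by relativising the construction of a sufficiently ample auxiliary divisor $\mc{E}$ and setting $\mc{T}=\mc{D}.\mc{E}$, and part $(2)$ by translating $T_oW=T_oB$ into surjectivity of $H^0(\mc{L}_t)\to H^0(\mc{L}_o)$ via the flattening stratification and Lemma~\ref{brill17}, then invoking Corollary~\ref{brill09}.

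One small redirection in part $(1)$: you route the fibrewise argument through Theorem~\ref{brnew01}$(2)$, which requires $\mc{D}_b\cup\mc{E}_b$ to be effective Lefschetz, and you leave that step somewhat vague. The paper instead appeals directly to the proof of Theorem~\ref{th:br03}: the point is that the affineness you already arranged (of $\mc{X}_b\setminus\mc{E}_b$ and $\mc{X}_b\setminus(\mc{D}_b\cup\mc{E}_b)$) forces $H^1$ of their structure sheaves to vanish, hence $\Phi_{\mc{E}_b}$ and $\Phi_{\mc{D}_b\cup\mc{E}_b}$ are injective, and the diagram chase in \eqref{brill16} finishes. This bypasses the need to verify effective Lefschetz for the union; your affineness hypothesis already contains everything required, so you should cite \ref{th:br03} rather than \ref{brnew01}$(2)$. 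The paper also secures the requisite vanishing uniformly in $b$ by choosing $\mc{H}$ so that $H^1(\mo_{\mc{X}_b}(\mc{D}_b)\otimes\mc{H}_b)=0=H^1(\mc{H}_b)$ via Serre vanishing, which is a cleaner bookkeeping device than your Bertini-plus-shrinking argument, though both work.

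For part $(2)$, your anticipated obstacle about $h^0$ jumping is handled in the paper exactly as you sketch: flatness of $\mc{D}_t$ over the dual numbers is equivalent to smoothness (since $|\mc{L}_o|$ is a projective space), hence to surjectivity on $A'$-points for all small extensions, and then \cite[III.12.5, 12.10]{R1} reduces this to surjectivity of $\eta_t$. Your exact sequence $0\to H^0(\mc{L}_o)\to H^0(\mc{L}_t)\to H^0(\mc{L}_o)\to H^1(\mc{L}_o)$ is the right mechanism, and Lemma~\ref{brill17} is precisely what upgrades ``section lifts'' to ``regular section lifts.''
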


\begin{proof}
$(1)$: Since $\pi$ is a projective morphism, there exists a relative
very ample line bundle $\mc{H}$ over $\mc{X}$, embedding $\mc{X}$
into a projective space over $B$ and satisfying the conditions
\[H^1(\mo_{\mc{X}_b}(\mc{D}_b) \otimes \mc{H}_b)\,=\, 0\ \ \text{ and }\ \
 H^1(\mc{H}_b)\,=\,0\]
for all $b \,\in\, B$, where $\mc{H}_b\,:=\, \mc{H}|_{\mc{X}_b}$.
The existence of such an invertible sheaf $\mc{H}$ is guaranteed 
by Serre's vanishing theorem. Take a general section, say $H \,\in\, 
H^0(\mc{H})$, and set $\mc{T}\,:=\,H.\mc{D}$. For $H$ general, 
$\mc{T}$ is $B$-flat, proper, closed subvariety of codimension $2$ in 
$\mc{X}$. The proof then follows identically as the proof of Theorem 
\ref{th:br03} after replacing $X$ by $\mc{X}_b$, $E$ by $H_b$ and $D$ by 
$\mc{D}_b$ for all $b \in B$, where $H_b\,:=\,H \cap \mc{X}_b$. 
This proves $(1)$.

$(2)$: 
Let $t\, :\, \Spec(k[\epsilon]/(\epsilon^2)) \, \longrightarrow\, W$ correspond to a 
tangent vector $T_oW$ i.e., the closed point of $\Spec(k[\epsilon]/(\epsilon^2))$
maps to $o$, under $t$. Let
\[\mf{d}_t\, :\, \mc{D}_t \, \longrightarrow\, \Spec(k[\epsilon]/(\epsilon^2))\]
be the base change of $\mf{d}_{\mc{L}}$
under the morphism $t$. By the universal property of flattening 
stratification mentioned above, $t \, \in\, T_oW$ if and only if 
$\mc{D}_t$ is flat over $\Spec(k[\epsilon]/(\epsilon^2))$.
Since $\mc{D}_o\, :=\, \mf{d}_{\mc{L}}^{-1}(o)$ is smooth (projective space), $\mf{d}_t$ is flat
if and only if it is smooth.
This implies that $t \, \in\, T_oW$ if and only if for every 
local Artinian $k[\epsilon]/(\epsilon^2)$-algebra $A$ and a small extension of $A$
\[0 \, \longrightarrow\, I \, \longrightarrow\, A' \xrightarrow{\eta} A \, \longrightarrow\, 0,\]
the natural map
$\mc{D}_t(\Spec(A')) \, \longrightarrow\, \mc{D}_t(\Spec(A)) \, \mbox{ is surjective}.$
Define $$\mc{L}_A\, :=\, \mc{L} \otimes_{_{\mo_{B,o}}} A\ \ \text{ and }\ \
\mc{L}_{A'}\, :=\, \mc{L} \otimes_{_{\mo_{B,o}}} A'\, .$$ 
Recall, elements in $\mc{D}_t(\Spec(A'))$ (respectively, $\mc{D}_t(\Spec(A))$)
is in $1-1$ correspondence with regular global sections of 
$\mc{L}_{A'}$ (respectively, $\mc{L}_A$) (see \cite{stadiv}).
Recall, \cite[Propositions III.$12.5$ and $12.10$]{R1} states that the natural morphism from 
$H^0(\mc{L}_{A'})$ to $H^0(\mc{L}_{A})$ is surjective if and only if the homomorphism
\[\eta_{_t}\, :\, H^0(\mc{L}_t) \, \longrightarrow\, H^0(\mc{L}_o)\]
is surjective. 
 Using Lemma \ref{brill17}, we conclude that
$t \, \in\, T_oW$ if and only if $\eta_{_t}$ is surjective.
By Corollary \ref{brill09}, 
any $D \, \in\, |\mc{L}_o|$ is saturated and $T$-semi-regular 
for every proper, closed subvariety $T \,\subseteq\, D$
if and only if for every $t \, \in\, T_oB$, we have $s_D \,\in \,\Ima(\eta_{_t})$, where 
$s_D \in H^0(\mc{L}_o)$ is a global section corresponding to $D$. 
This proves $(2)$.
\end{proof}

\subsection{Noether-Lefschetz locus}

Denote by $U_d$ the space parameterizing smooth, degree $d$ surfaces in $\p3$,
and let
\[\pi\, :\, \mc{X} \, \longrightarrow\, U_d\] the corresponding universal family.
The \emph{Noether-Lefschetz locus}, denoted by $\NL_d$, parametrizes
smooth, degree $d$ surfaces in $\p3$ with Picard number at least two.
Using the Lefschetz $(1,1)$-theorem, observe that any irreducible component $L$
of $\NL_d$ is locally isomorphic to the Hodge locus $\NL(\gamma)$ for 
some $\gamma \, \in\, H^{1,1}(\mc{X}_o,\,\mb{C}) \, \cap \, H^2(\mc{X}_o,\,\mb{Z})$
and $o \, \in\, L$ (see \cite[\S~ 5.3.3]{v5}).

Denote by $Q_d$ the Hilbert polynomial of a smooth, degree $d$ surface in $\p3$
and $\mr{Hilb}_{P,Q_d}$ the flag Hilbert scheme parameterizing pairs $(C \,\subseteq\, X)$,
where $C$ (respectively, $X$) is of Hilbert polynomial $P$ (respectively, $Q_d$) (see \cite[\S~4.5]{S1}).
Let \[\pr_2\, :\, \mr{Hilb}_{P,Q_d} \, \longrightarrow\,\mr{Hilb}_{Q_d}\] be the natural projection.

\begin{thm}\label{th:noe}
Let $L$ be an irreducible component of $\NL_d$, $o \, \in\, L$ a point 
and $\mc{X}_o$ the corresponding surface. Choose $C \,\subseteq\, \mc{X}_o$ 
such that $\ov{\NL([C])}\, =\, L$ (closure under Zariski topology).
Then, there exists an irreducible component $W$ of $\mr{Hilb}_{P,Q_d}$
such that $\pr_2(W) \,\cong\, L$ if and only if $C$ is 
saturated and $T$-semi-regular for every proper, closed, 
subvariety $T$ of $C$.
\end{thm}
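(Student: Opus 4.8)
The plan is to translate the geometric condition about the flag Hilbert scheme into the deformation-theoretic language developed in the previous sections, and then invoke Corollary \ref{brill09}. Recall that $\mr{Hilb}_{P,Q_d}$ parameterizes pairs $(C' \subseteq X')$ where $X'$ is a degree $d$ surface and $C'$ an effective Cartier divisor on $X'$ with the prescribed Hilbert polynomials, and that $\pr_2$ forgets $C'$. The key point is that, locally near $(C \subseteq \mc{X}_o)$, the image of $\pr_2$ consists precisely of those first-order (and higher) deformations $X_t$ of $\mc{X}_o$ for which $C$ lifts as an effective Cartier divisor; this is the standard infinitesimal description of the flag Hilbert scheme (see \cite[\S~4.5]{S1}). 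On the other hand, since $\ov{\NL([C])} = L$, the component $L$ is, in a neighborhood of $o$, the Hodge locus $\NL([C])$, whose tangent space at $o$ is $\{t \in T_oU_d : [C] \bigcup \mr{KS}(t) = 0\}$ by \cite[Theorem $10.21$]{v4} as recalled in \S~\ref{sec:hod}. So the two sides of the desired equivalence unwind, at the level of tangent spaces, to: (existence of a component $W$ with $\pr_2(W) \cong L$) $\iff$ ($C$ deforms as an effective Cartier divisor along $t$ exactly when $[C]\bigcup\mr{KS}(t)=0$), which is precisely statement $(2)$ of Corollary \ref{brill09}, hence equivalent to $C$ being saturated and $T$-semi-regular for every proper closed $T \subseteq C$.

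First I would set up the local models. Let $\mr{Hilb}_{Q_d}$ be smooth at $o$ (degree $d$ surfaces in $\p3$ are unobstructed, $H^1(\T_{\mc{X}_o})$-obstructions vanish for $d \geq 1$, and in any case $U_d$ is smooth), so a neighborhood of $o$ in $\mr{Hilb}_{Q_d}$ is identified with $U_d$. The forgetful map $\pr_2$ restricted to the component $W$ through $(C \subseteq \mc{X}_o)$ has image a closed subscheme of $U_d$; I would show that set-theoretically (in a neighborhood of $o$) a point $X_t$ lies in $\pr_2(W)$ if and only if $C$ lifts to $X_t$ as an effective Cartier divisor, using that lifting the pair is the same as lifting $C$ once $X_t$ is fixed, and that $\pr_2$ is proper so its image is closed. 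Dually, $L$ near $o$ equals $\NL([C])$ by hypothesis. Next I would argue that the existence of a component $W$ with $\pr_2(W) \cong L$ (as schemes, or even just the reduced structures, near $o$) forces the scheme $\mr{Hilb}_{P,Q_d}$ near $(C\subseteq \mc{X}_o)$ to dominate $L$ with the fibers of $\pr_2$ finite — equivalently the relative obstruction/tangent analysis says that every infinitesimal deformation of $\mc{X}_o$ preserving $[C]$ as a Hodge class actually lifts $C$. Conversely, if $C$ is saturated and $T$-semi-regular for all $T$, then by Corollary \ref{brill09} these two loci have the same tangent space at $o$; combined with the surjection at the level of higher-order jets (obtained by iterating the small-extension argument, exactly as in the proof of Theorem \ref{brill21}$(2)$ via Lemma \ref{brill17} and \cite[Propositions III.$12.5,\ 12.10$]{R1}), one gets an isomorphism of the relevant components, furnishing $W$.

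The technical glue is the passage from the infinitesimal statement in Corollary \ref{brill09} — which is purely first-order — to an honest identification of an irreducible component $W$ of the flag Hilbert scheme with $L$. Here I would mimic the argument in the proof of Theorem \ref{brill21}$(2)$: for a small extension $0 \to I \to A' \to A \to 0$ of local Artinian algebras, lifting the pair $(C \subseteq X)$ from $A$ to $A'$ is governed by the obstruction in $H^1(\N_{C|\mc{X}_o})$, and the condition that $[C]$ remains a Hodge class is governed by the image of this obstruction in $H^2_C(\mo_{\mc{X}_o})$ and then in $H^2(\mo_{\mc{X}_o})$; the hypotheses $\mr{Tors}_C = 0$ (saturatedness, Corollary \ref{cor:sat}) and effective Lefschetz (Theorem \ref{brnew01}, equivalent to $T$-semi-regularity for all $T$) exactly kill the discrepancy between these obstructions at every stage. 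I expect this step — showing that the $T$-semi-regularity/saturatedness hypotheses propagate through all higher-order thickenings, so that $\pr_2$ becomes étale (or at least an isomorphism after restricting to reduced structures) from a component $W$ onto $L$ — to be the main obstacle, since it requires controlling obstructions beyond first order; the reduction to the first-order criterion of Corollary \ref{brill09} is where the real content of the earlier sections is used, and the remaining bookkeeping is formal.
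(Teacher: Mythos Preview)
Your proposal follows the same route as the paper: reduce the statement to Corollary \ref{brill09}. The paper's proof is literally the single sentence ``This follows directly from Corollary \ref{brill09}.'' Your first paragraph already contains everything the paper offers, namely the identification of the two tangent spaces (that of $\pr_2$-liftability of $C$ and that of the Hodge locus $\NL([C])$) with the two sides of the equivalence in Corollary \ref{brill09}.

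Where you diverge is in worrying about the passage from first-order information to an honest isomorphism of irreducible components. You flag this as ``the main obstacle'' and propose to iterate small-extension arguments as in Theorem \ref{brill21}(2). The paper does not do this and does not appear to intend to: saturatedness and $T$-semi-regularity are by definition first-order conditions along $B$, and Corollary \ref{brill09} is a purely first-order equivalence, so the theorem is evidently meant to be read at the level of tangent spaces (i.e., $\pr_2(W)\cong L$ is to be understood as agreement of the infinitesimal loci near $o$, not as a global scheme isomorphism requiring control of all higher-order obstructions). Your iteration argument would in any case not go through as written, since the hypotheses are only about $t\in T_oB$ and do not obviously constrain obstructions over thicker Artinian bases; but the paper sidesteps this by not claiming more than the first-order statement. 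In short: your core argument is correct and matches the paper; the extra ``technical glue'' you propose is neither needed nor supplied by the paper itself.
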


\begin{proof}
This follows directly from Corollary \ref{brill09}.
\end{proof}

\section{An example}

In this section, we produce a family $\pi\,:\,\mc{X} \,\longrightarrow\, B$ 
along with a point $o \, \in\, B$ and an effective Cartier divisor $D \,\subseteq\, \mc{X}_o$,
which does not satisfy the effective Lefschetz property i.e. 
 there exists a first order infinitesimal 
 deformation of $\mc{X}_o$ corresponding to a 
tangent vector $t \in T_oB$ such that $D$ does not deform as an 
effective Cartier divisor but the cohomology class of $D$ deforms 
as a Hodge class (Theorem \ref{brill22}). The example is motivated
by Remark \ref{rem:ref}. In particular, 
the divisor $D$ arises from the limit of the blow-up of 
a fixed smooth, projective surface
along a finite set of points, as the set of points converges
to the base locus of a linear system.

\begin{note}
Let $Y$ be a smooth projective surface and $\mc{L}$ an invertible sheaf on $Y$. 
Denote by $F$ the reduced base locus of $\mc{L}$ (this means the base locus with
reduced scheme structure). Suppose $F \, \not= \, \emptyset$. Fix a point $o \, \in\, F$
such that $o$ is a reduced base point in the sense 
that there exists a global section $s \, \in\, H^0(\mc{L})$
such that its image $s_o$ under the localization morphism
\[H^0(\mc{L}) \, \longrightarrow\, \mc{L}_o\] defines a non-zero element 
in $(m_o/m_o^2)\mc{L}_o$, where $m_o$ is the maximal ideal of $\mo_{Y,o}$.
Denote by $B$ the union of the complement  $Y\backslash F$ and the point 
$o$. Let $D$ be the surface in $Y \times B$ which is the union of 
$\{(b\, ,\, b)\,|\, b \in B\}$ and $(F\, \backslash\, \{o\})
\times\, B.$ Denote by 
\[
\ov{\pi}\, :\, \mc{X} \, \longrightarrow\, Y \times B\]
the blow-up of $Y \times B$ along $D$. Let $E\, \subseteq\, \mc{X}$ be the exceptional divisor. Let
\[\pi\, :\, \mc{X}\, \longrightarrow\, B\]
be the composition of natural morphisms
\[\mc{X} \xrightarrow{\ov{\pi}} Y \times B \xrightarrow{\pr_2} B\, ,\]
and let
\[
\pi\vert_E\, :\, E \,\longrightarrow\,  B\]
be the restriction of $\pi$ to $E$.
Denote by $\mc{X}_b\, :=\, \pi^{-1}(b)$ and $E_b\, :=\, (\pi|_E)^{-1}(b)$
for all $b \, \in\, B$.
\end{note}
 
\begin{lem}\label{br22}
The above morphisms $\pi$ and $\pi\vert_E$ are flat.
\end{lem}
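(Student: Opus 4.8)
The plan is to reduce the flatness of $\pi$ and $\pi|_E$ to elementary local statements about the blow-up $\ov{\pi}\colon\mc{X}\to Y\times B$. First I would observe that $Y\times B\to B$ is obviously flat (it is a base change of $Y\to\Spec k$), so for $\pi$ it suffices to show that $\mc{X}\to Y\times B$ is flat, i.e.\ that the blow-up of $Y\times B$ along the subvariety $D$ does not distort fibers over $B$. Away from the single problematic point $o$ this is clear: over $Y\setminus F$ the center $D$ meets each fiber $Y\times\{b\}$ in the single point $(b,b)$ (a reduced point, hence a Cartier divisor is not involved but the blow-up at a smooth point of a smooth surface is again smooth), and over the locus $(F\setminus\{o\})\times B$ the center is a constant family $\times B$, so the blow-up commutes with the base change to each fiber. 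Thus $\mc{X}_b$ is, for every $b$, the blow-up of $Y$ at a finite reduced set of points, all of the same length, and in fact one checks the Hilbert polynomial of $\mc{X}_b$ (with respect to a relative polarization) is independent of $b$; together with $B$ reduced and $\mc{X}$ nice this gives flatness. The only point requiring care is the behaviour near $o\in F$, where as $b\to o$ two of the blown-up points collide; I would handle this in local coordinates.

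The key computation is therefore local at $o$. Choosing local coordinates $x,y$ on $Y$ centered at $o$ and a coordinate $b$ on (a smooth chart of) $B$ near $o$, the center $D$ near $o$ is cut out by the ideal generated by $x-b$ and $y$ (the diagonal branch $\{(b,b)\}$; note that near $o$ the other component $(F\setminus\{o\})\times B$ is absent or passes through $o$ only in the limit, and the hypothesis that $o$ is a \emph{reduced} base point — encoded by the section $s$ with $s_o\ne 0$ in $(m_o/m_o^2)\mc{L}_o$ — is exactly what guarantees $D$ is smooth of codimension $2$ at the relevant points and cut out by a regular sequence). Blowing up this regular sequence, $\mc{X}$ is covered by two charts in which the exceptional divisor $E$ is principal, and in each chart one writes down the coordinate ring explicitly as a polynomial ring over $\mo_B$ modulo one relation of the form (coordinate)$\cdot b-(\text{coordinate})$; this ring is visibly $\mo_B$-flat (it is even $\mo_B$-free on monomial bases), and modding out by the equation of $E$ one sees $\mathcal{O}_E$ is $\mo_B$-flat as well. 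Flatness being local on the source, this finishes both assertions.

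The step I expect to be the main obstacle is precisely verifying, at the collision point $b=o$, that the center $D$ is still a local complete intersection of codimension $2$ (so that the blow-up chart computation above is valid) rather than acquiring an embedded point or a fat structure; this is where the assumption that $o$ is a reduced base point of $\mc{L}$ — not merely a base point — is essential, and I would spell out the local ideal of $D$ at $o$ carefully to confirm it is generated by a regular sequence. Once that is established, everything else is the routine chart-by-chart check sketched above, and one may alternatively invoke the general fact that blowing up a $B$-flat l.c.i.\ subscheme of a $B$-flat scheme yields a $B$-flat scheme (and that the exceptional divisor, being a projective bundle over the $B$-flat center, is $B$-flat) to shorten the argument. $\hfill\square$
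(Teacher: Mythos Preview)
Your alternative in the last sentence---invoke that blowing up a $B$-flat regularly embedded center of a $B$-flat ambient scheme yields a $B$-flat blow-up, with exceptional divisor a projective bundle over the center---is exactly the paper's approach. The paper first checks that $D\to B$ has constant fibres (hence is flat, by \cite[Theorem III.9.9]{R1}), then appeals to the standard compatibility of blow-up with base change for the flatness of $\mc{X}$, and to \cite[\S 8.1.2, Theorem 1.19]{liu} for $E$. So the short route you mention at the end already matches the paper and is correct.

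However, several points in the main body of your plan are off and would lead you astray if you pursued them. First, the sentence ``for $\pi$ it suffices to show that $\mc{X}\to Y\times B$ is flat'' is false: the blow-up $\ov\pi$ of a codimension-$2$ centre is never flat (the fibre jumps from a point to a $\mb{P}^1$), so this reduction cannot be what you mean. Second, $B=(Y\setminus F)\cup\{o\}$ is an open subset of the \emph{surface} $Y$, hence two-dimensional; your local chart uses a single coordinate $b$. Third---and this is the key geometric misreading---there is no collision at $o$. The two pieces of $D$, namely the diagonal $\{(b,b):b\in B\}$ and the constant piece $(F\setminus\{o\})\times B$, are in fact \emph{disjoint}: a point $(b,b)$ lies in the second piece only if $b\in F\setminus\{o\}$, but $b\in B$ forces $b\notin F\setminus\{o\}$. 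So $D$ is globally a disjoint union of sections of $Y\times B\to B$, hence smooth, regularly embedded of codimension $2$, and $B$-flat without any further hypothesis. In particular the ``reduced base point'' assumption on $o$ plays no role in this lemma; it is used only later, in Theorem~\ref{brill22}, to produce the tangent direction $\phi$. Once you see that $D$ is already a $B$-flat l.c.i., your alternative (and the paper's proof) goes through in one line.
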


\begin{proof}
Let $\pr_2\, :\, D \, \longrightarrow\, B$ be the natural projection. For every $b \, \in\, B$, 
\[\pr_2^{-1}(b)\, =\, ((F \backslash \{o\}) \times \{b\}) \, \cup \, (\{b\} \times \{b\}).\]
Hence, $\pr_2$ is a flat morphism (to prove this use \cite[Theorem III.$9.9$]{R1}).
By the definition of a blow-up, observe that $\mc{X}$
is flat over $B$, under the morphism $\pi$.
The flatness of $\pi|_E$ follows directly from the 
description of the exceptional divisor of a blow-up 
along a regular subscheme as given in \cite[\S~8.1.2, Theorem $1.19$]{liu}.
This proves the lemma.
\end{proof}
Consider the following short exact sequence of sheaves on $\mc{X}$:
\begin{equation}\label{brill11}
0 \, \longrightarrow\, \mo_{\mc{X}}(-E) \, \longrightarrow\, \mo_{\mc{X}} \, \longrightarrow\, \mo_E \, \longrightarrow\, 0\, .
\end{equation}
We are interested in the invertible sheaf arising from the 
pull-back of $\mc{L}$ under the natural morphism from 
$\mc{X}$ to $Y$, after twisting by the ideal sheaf of the exceptional
divisor. More precisely, consider the composed morphism
$$\pi_1\, :\, \mc{X}\, \xrightarrow{\ov{\pi}}\, Y \times B\, \xrightarrow{\pr_1} Y$$
Define
$$\mc{M}\, :=\, \pi_1^*\mc{L} \, \otimes\, \mo_{\mc{X}}(-E)\, .$$
Tensoring \eqref{brill11} by $\pi_1^*\mc{L}$, we get the following short 
exact sequences:
\begin{align}
& 0 \, \longrightarrow\, \mc{M} \, \longrightarrow\, \pi_1^*\mc{L} \, \longrightarrow\, \pi_1^*\mc{L} \, \otimes\, \mo_E \, \longrightarrow\, 0 \label{brill13}
\end{align}
For any $b \, \in\, B$, let $\mc{M}_b\, :=\, \mc{M}|_{\mc{X}_b}$. 
We then observe:

\begin{thm}\label{brill22}
Let $\mr{KS}\, :\, T_oB \, \longrightarrow\, H^1(\T_{\mc{X}_o})$ be the Kodaira--Spencer map associated to $\pi$.
There exists $t \, \in\, T_oB$ and $C \, \in\, |\mc{M}_o|$ such that $\mr{KS}(t) \, \cup \, [C]\, =\, 0$
but $C$ does not lift to an effective Cartier divisor of the corresponding
first order infinitesimal deformation $\mc{X}_{o,t}$ of $\mc{X}_o$.
\end{thm}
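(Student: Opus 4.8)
The plan is to reduce the statement to the local computation carried out in Remark~\ref{rem:ref} via the observation that, near the distinguished point $o \in F$, the family $\pi\,:\,\mc{X}\,\to\,B$ restricted to a curve through $o$ looks exactly like the blow-up family of Remark~\ref{rem:ref}; the divisor $C$ will be the strict transform of the exceptional fibre $E_o$ inside $\mc{X}_o$. First I would choose the section $s \in H^0(\mc{L})$ given in the setup (which by hypothesis defines a nonzero element of $(m_o/m_o^2)\mc{L}_o$) and let $C \in |\mc{M}_o|$ be the corresponding divisor in the linear system of $\mc{M}_o = \pi_1^*\mc{L}_o \otimes \mo_{\mc{X}_o}(-E_o)$; concretely, $s$ vanishes at $o$ to order one, so $\pi_1^*s$ vanishes along $E_o$ to order one and descends to a section of $\mc{M}_o$ whose zero divisor $C$ is (a component of) the strict transform of $\{s=0\}$ together with possibly the exceptional curve. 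I would arrange, by the reducedness of the base point $o$, that the relevant irreducible piece $C$ is a smooth rational curve $D$ with $D^2 = -2$ on $\mc{X}_o$, exactly as in Remark~\ref{rem:ref}, and that $\N_{C|\mc{X}_o} \cong \mo_C(-2)$, hence $H^1(\N_{C|\mc{X}_o}) \cong \mb{C}$ by Serre duality.

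Next I would pick the tangent vector $t \in T_oB$ corresponding to the direction in which the blown-up point moves off the base locus $F$ — i.e., the ``diagonal'' direction $\{(b,b)\}$ in $D \subseteq Y\times B$ — and show two things. On the geometric side, $C$ does not lift to an effective Cartier divisor of $\mc{X}_{o,t}$: this is the content of the $A_2$-degeneration computation in Remark~\ref{rem:ref}, where one shows that $\N_{D|\mc{X}} \cong \mo_D(-1)\oplus\mo_D(-1)$ (because $D$ contracts to an ordinary double point on the blow-up of $Y\times B$ along the limit configuration), so $H^0(\N_{D|\mc{X}}) = 0$ and the coboundary $H^0(\mo_D) \to H^1(\N_{D|\mc{X}_o})$ arising from the normal bundle sequence $0 \to \N_{D|\mc{X}_o} \to \N_{D|\mc{X}} \to \mo_D \to 0$ is injective; hence $\mr{Ob}_C \circ \mr{KS}(t) \neq 0$, and the same recursion on multiples as in Remark~\ref{rem:ref} (using $D^2 = -2$ to force $H^0(\mo_D\otimes\mo_{\mc{X}_o}(mD)) = 0$) shows no multiple $nC$ lifts either, so by Theorem~\ref{brill04}, $\pi_C\circ\mr{KS}(t) \neq 0$. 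On the cohomological side, I would show $\mr{KS}(t)\cup[C] = \Phi_C\circ\pi_C\circ\mr{KS}(t) = 0$. For this I would use the factorisation from part~(2) of the proof of Theorem~\ref{brnew01}: the obstruction $\Phi_C\circ\pi_C\circ\mr{KS}(t)$ vanishes provided $C$ together with a suitably ample auxiliary divisor behaves as in the Mayer--Vietoris diagram~\eqref{brill16}; alternatively, and more directly in this setting, I would exhibit an effective divisor $C'$ linearly equivalent to $C$ (the strict transform of a \emph{different} section of $\mc{L}$ not vanishing at $o$) that manifestly \emph{does} deform, forcing $\mo_{\mc{X}_o}(C) = \mo_{\mc{X}_o}(C')$ to deform as a line bundle, whence $[C]\cup\mr{KS}(t) = 0$.

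The cleanest route for the vanishing of $[C]\cup\mr{KS}(t)$ is in fact to compute $\mc{M} = \pi_1^*\mc{L}\otimes\mo_{\mc{X}}(-E)$ globally: since $\mc{M}$ is defined on all of $\mc{X}$, the line bundle $\mc{M}_o = \mo_{\mc{X}_o}(C)$ automatically extends over the whole family $B$ (not just to first order), so its Chern class is constant in the family and $[C]$ deforms as a Hodge class along \emph{every} $t \in T_oB$; in particular along the $t$ chosen above. Combined with the non-liftability of $C$ and of all its multiples established in the previous paragraph, this exhibits the failure of the effective Lefschetz property and proves the theorem. The main obstacle I anticipate is the first step: verifying carefully that for a \emph{reduced} base point $o$ the strict-transform divisor $C$ really is a $(-2)$-curve with the stated normal bundle $\mo_C(-1)\oplus\mo_C(-1)$ inside the total space $\mc{X}$ — one must check that the relevant contraction produces an ordinary double point exactly as in Remark~\ref{rem:ref}, and that the section $s$ of $\mc{L}$ having nonzero image in $(m_o/m_o^2)\mc{L}_o$ is precisely the hypothesis that guarantees this local model; handling the possibility that $F\setminus\{o\}$ contributes extra components to the zero divisor of $\pi_1^*s$, and isolating the single $(-2)$-curve $C$ one wants, is where the bookkeeping will be delicate.
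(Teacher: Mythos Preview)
Your argument for $\mr{KS}(t)\cup [C]=0$ is correct and is exactly what the paper relies on implicitly: since $\mc{M}=\pi_1^*\mc{L}\otimes\mo_{\mc{X}}(-E)$ is defined on the whole family $\mc{X}$, the line bundle $\mc{M}_o=\mo_{\mc{X}_o}(C)$ deforms over all of $B$, so its class stays Hodge for every $t$.

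The non-liftability half, however, has a genuine gap: the analogy with Remark~\ref{rem:ref} does not hold. In that remark the $(-2)$-curve arises because two blown-up points \emph{collide} in the central fibre and one performs a \emph{sequential} double blow-up, producing an $A_2$ configuration. In the present setup the centre $D\subseteq Y\times B$ is the disjoint union of the diagonal $\{(b,b)\}$ and the sheets $\{q\}\times B$ for $q\in F\setminus\{o\}$; since $q\notin B$ for such $q$, these components never meet, and over $b=o$ the blown-up points $\{o\}\cup(F\setminus\{o\})=F$ are all distinct. So $\mc{X}_o=\mr{Bl}_F Y$ is an ordinary blow-up at distinct reduced points, every exceptional component is a $(-1)$-curve, and there is no $(-2)$-curve or $A_2$ degeneration to be found. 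The divisor $C\in|\mc{M}_o|$ is the strict transform of $\{s=0\}\subseteq Y$, with $C^2=\mc{L}^2-|F|$; it is not a $(-2)$-curve in general, and no component of it need be one. Consequently the normal-bundle computation $\N_{C|\mc{X}}\cong\mo(-1)\oplus\mo(-1)$ and the contraction-to-ODP argument have no meaning here (note also that $\mc{X}$ is a $4$-fold, so even after restricting to a curve in $B$ the ranks would require care).

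The paper's proof of non-liftability is entirely different and purely sheaf-theoretic. One tensors the exact sequence \eqref{brill13} by $k(o)[\epsilon]/(\epsilon^2)$ via a ring map $\phi:\mo_{B,o}\to k(o)[\epsilon]/(\epsilon^2)$ and compares $H^0$'s: since $F$ is the base locus of $\mc{L}$, the evaluation $\rho_o:H^0(\pi_{1,o}^*\mc{L})\to H^0(\pi_{1,o}^*\mc{L}\otimes\mo_{E_o})$ is zero, and $r_2:H^0(\pi_1^*\mc{L}\otimes_\phi k[\epsilon]/(\epsilon^2))\to H^0(\pi_{1,o}^*\mc{L})$ is surjective by Grauert. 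A Snake-lemma argument then shows that the restriction $r_1$ on $H^0(\mc{M})$ is surjective only if the first-order evaluation $\rho_\phi$ is zero. The ``reduced base point'' hypothesis is used precisely here: writing $s_o=f_s g_s$ with $f_s\in m_o\setminus m_o^2$, one chooses $\phi$ sending $f_s\mapsto\epsilon$ (and a complementary parameter to $0$), and computes directly that $\rho_\phi(s\otimes 1)\neq 0$. Hence $r_1$ is not surjective and the corresponding $C\in|\mc{M}_o|$ fails to lift along $t=\phi$. The role of the hypothesis on $o$ is thus to manufacture a tangent direction $t$ for which a section survives to first order in $H^0(\pi_1^*\mc{L}\otimes\mo_E)$, not to produce a $(-2)$-curve.
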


\begin{proof}
Any tangent vector $t \, \in\, T_oB$ corresponds to a ring morphism, 
\[\phi\, :\, \mo_{B,o} \, \longrightarrow\, k(o)[\epsilon]/(\epsilon^2).\] As \eqref{brill13}
is a short exact sequence of $B$-flat $\mo_{\mc{X}}$-modules, 
applying $- \otimes_{\phi} k(o)[\epsilon]/(\epsilon^2)$
preserves exactness:
\[0 \, \longrightarrow\, \mc{M} \otimes_{\phi} \frac{k(o)[\epsilon]}{(\epsilon^2)} \, \longrightarrow\, \pi_1^*\mc{L} \otimes_{\phi} \frac{k(o)[\epsilon]}{(\epsilon^2)} \xrightarrow{\rho_\phi} (\pi_1^*\mc{L} \, \otimes\, \mo_E) \otimes_{\phi} \frac{k(o)[\epsilon]}{(\epsilon^2)}\, \longrightarrow\, 0\, .\]
Given any coherent sheaf $\mc{F}$ on $\mc{X}$, define the morphism 
\[\epsilon\, :\, \left(\mc{F} \otimes_\phi \frac{k(o)[\epsilon]}{(\epsilon^2)}\right) \otimes_{ \frac{k(o)[\epsilon]}{(\epsilon^2)}} k(o) \, \longrightarrow\,\left(\mc{F} \otimes_\phi \frac{k(o)[\epsilon]}{(\epsilon^2)}\right) \otimes_{ \frac{k(o)[\epsilon]}{(\epsilon^2)}} \frac{k(o)[\epsilon]}{(\epsilon^2)},\]
which maps $s \otimes 1$ to $s \, \otimes\,\epsilon$ for $s$ a section of $\mc{F} \otimes_\phi k(o)[\epsilon]/(\epsilon^2)$.
Applying the global section functor to the above short exact sequence, we obtain the following diagram:
$$
\xymatrix{
0 \ar[r] & H^0(\mc{M}_o) \ar[r] \ar[d]^{\epsilon} & H^0(\pi_{1,o}^*\mc{L}) \ar[d]^{\epsilon} \ar[r]^{\rho_o} & H^0(\pi_{1,o}^*\mc{L} \, \otimes\, \mo_{E_o}) \ar[d]^{\epsilon}\\
0 \ar[r] & H^0\left(\mc{M}\otimes_\phi \frac{k(o)[\epsilon]}{(\epsilon^2)}\right) \ar[r] \ar[d]^{r_1} & H^0\left(\pi_{1}^*\mc{L}\otimes_\phi \frac{k(o)[\epsilon]}{(\epsilon^2)}\right)\ar[d]^{r_2} \ar[r]^{\rho_{\phi} \hspace{6mm}} & H^0\left((\pi_{1}^*\mc{L} \, \otimes\,\mo_{E})\otimes_\phi \frac{k(o)[\epsilon]}{(\epsilon^2)}\right) \ar[d]\\
0 \ar[r] & H^0(\mc{M}_o) \ar[r] & H^0(\pi_{1,o}^*\mc{L}) \ar[r]^{\rho_o} & H^0(\pi_{1,o}^*\mc{L} \, \otimes\, \mo_{E_o}) }
$$
where all the rows and columns are exact and the morphisms $\epsilon$ are injective and $\pi_{1,b}: \mc{X}_b \to Y$ is the restriction of $\pi$ to 
$\mc{X}_b$, for any $b \in B$. Using the 
Zariski main theorem and the projection formula, we get 
\begin{align*}
& H^0(\pi_{1,b}^*\mc{L})\, =\, H^0(\pi_{1,b_*}\pi_{1,b}^*\mc{L})\, =\, H^0(\mc{L}) \mbox{ for all } b \, \in\, B \mbox{ and }\\
& H^0(\pi_{1,o}^*\mc{L} \, \otimes\, \mo_{E_o})\, =\, H^0(\pi_{1,o_*}(\pi_{1,o}^*\mc{L} \, \otimes\, \mo_{E_o}))\, =\, H^0(\mc{L} \, \otimes\, \mo_F).
\end{align*}
Since $F$ is the base locus of $\mc{L}$, this means the evaluation map $\rho_o$ is the zero map.
By Grauert's upper semicontinuity theorem \cite[Corollary III.$12.9$]{R1},
the morphism $r_2$ is surjective. If $r_1$ is surjective, then 
the Snake lemma implies that $\rho_\phi$ is the zero map.
Therefore, it suffices to show that $\rho_\phi$ is not the zero map.
Indeed, this would imply that $r_1$ is not surjective.
This directly means that there exists $C \, \in\, |\mc{M}_o|$
not contained in $\Ima r_1$. In other words, 
$C$ does not lift to an effective Cartier divisor of the corresponding
first order 
infinitesimal deformation $\mc{X}_{o,t}$ of $\mc{X}_o$.
This will prove the theorem.
 
Since $\pi_1\, =\, \ov{\pi} \, \circ \, \pr_1$, the projection formula along with Zariski's main theorem implies that 
\[ H^0(\pi_1^*\mc{L} \otimes_\phi k(o)[\epsilon]/(\epsilon^2))\, =\, 
H^0(\pr_1^*\mc{L} \otimes_\phi k(o)[\epsilon]/(\epsilon^2)) \mbox{ and }\]
\[H^0(\pi_1^*\mc{L} \, \otimes\, \mo_E \otimes_\phi k(o)[\epsilon]/(\epsilon^2))\, =\, 
H^0(\pr_1^*\mc{L} \, \otimes\, \mo_{D} \otimes_\phi k(o)[\epsilon]/(\epsilon^2))\,=\,\]
\[\, =\, \bigoplus\limits_{q \, \in\, F} H^0((\pr_1^*\mc{L} \, \otimes\, \mo_{D})_{q \times o}
\otimes_\phi k(p)[\epsilon]/(\epsilon^2))\, .\]
By assumption, there exist $s \,\, \in\, \, H^0(\mc{L})$,
$f_s \,\, \in\, \, m_o\backslash m_o^2$ and $g_s \,\, \in\, \, \mc{L}_o$ (the localization of $\mc{L}$
at $o$) such that 
$s_o\,=\,f_sg_s$ and $g_s \, \not\in\, \, m_o\mc{L}_o$; here $s_o$ is the image of $s$
under the localization morphism $H^0(\mc{L}) \,\longrightarrow\, \mc{L}_o$. 
Since $f_s \,\, \in\, \, m_o\backslash m_o^2$ and $Y$ is a smooth surface, we can choose
a regular sequence $(f_s,\,f_1)$ generating the maximal ideal $m_o$. 
Define, $$\phi\,:\,\mo_{B,o}
\,\longrightarrow\, k(o)[\epsilon]/(\epsilon^2)$$
the ring morphism by
$1 \,\longmapsto\, 1$, $f_s \,\longmapsto\, \epsilon$ and $f_1 \,\longmapsto\, 0$.
Then, $s$ defines a non-zero element
$$s \, \otimes\, 1 \, \in\, H^0(\pr_1^*\mc{L} \otimes_\phi k(o)[\epsilon]/(\epsilon^2))$$
and the image of $s \, \otimes\, 1$ under the natural localization morphism 
 \[\rho(\phi)'_o\,:\,H^0(\pr_1^*\mc{L} \otimes_\phi k(o)[
\epsilon]/(\epsilon^2)) \,\longrightarrow\, H^0(\mc{L}_o \otimes_\phi k(o)[\epsilon]/(\epsilon^2))\] 
is non-zero (use $(\pr_1^* \mc{L})_{(o,o)}\, =\, (\mc{L} \otimes_k \mo_B)_{(o,o)}\, =\, \mc{L}_o \otimes_k \mo_{B,o}$).

Recall, the composition $\Delta \,\hookrightarrow\, Y \times Y \,
\stackrel{\pr_1}{\longrightarrow}\,Y$
is an isomorphism. Hence, \[\pr_1^{\#}\,:\,\mo_{Y,o} 
\,\stackrel{\sim}{\longrightarrow}\,
\mo_{\Delta,o \times o}.\] Since the only irreducible component of $D$ containing
$o \times o$ is $\Delta$, we have 
 \[(\pr_1^*\mc{L} \, \otimes\, \mo_{D})_{o \times o} \otimes_\phi k(o)[\epsilon]/
(\epsilon^2) \,\cong\, \mc{L}_o \otimes_{\pr_1^{\#}} \mo_{\Delta, o \times o}
\otimes_\phi k(o)[\epsilon]/(\epsilon^2)
\,\cong\, \mc{L}_o \otimes_\phi k(o)[\epsilon]/(\epsilon^2).\]
Write the evaluation map $\rho_{\phi}\,=\,\bigoplus\limits_{q \, \in\, F} \rho_{\phi,q}$, 
where $\rho_{\phi,q}$ is the restriction of the evaluation map to $q$. 
Then, $\rho_{\phi,o}$ coincides with the morphism 
\[H^0(\pi_1^*\mc{L} \otimes_\phi k(o)[\epsilon]/(\epsilon^2))\, =\, 
H^0(\pr_1^*\mc{L} \otimes_\phi k(o)[\epsilon]/(\epsilon^2))
\xrightarrow{\rho(\phi)'_o} H^0(\mc{L}_o \otimes_\phi k(o)[\epsilon]/(\epsilon^2)).\]
 Since $\rho(\phi)'_o$ is non-zero, so is $\rho_\phi$.
This completes the proof of the theorem.
\end{proof}

\section*{Acknowledgements}
The comments of the referee helped us to reformulate the results 
of \S~$4$ in the previous version, in terms of the local 
obstruction theory and improve the overall exposition of the article.
We are grateful for his/her feedbacks and suggestions. The first-named author acknowledges 
support of a J. C. Bose Fellowship. The second author is 
currently supported by ERCEA Consolidator Grant $615655$-NMST and also
by the Basque Government through the BERC $2014-2017$ program and by Spanish
Ministry of Economy and Competitiveness MINECO: BCAM Severo Ochoa
excellence accreditation SEV-$2013-0323$.

\end{document}